\newtheorem{theorem}{Theorem}[section]
\newtheorem{proposition}{Proposition}[section] 
\newtheorem{corollary}{Corollary}[section] 
\newtheorem{lemma}{Lemma}[section]
\theoremstyle{definition}
\newtheorem{defn}{Definition}[section]
\newtheorem{remark}{Remark}[section]
\newtheorem{example}{Example}[section]
\newcommand{\ds}{\displaystyle}
\newcommand{\bs}{\boldsymbol}
\newcommand{\cal}{\mathcal}
\renewcommand{\phi}{\varphi}
\newcommand{\lra}{\longrightarrow}
\newcommand{\LLRA}{\ $\Longleftrightarrow$ \ } 
\newcommand{\LRA}{\ $\Longrightarrow$ \ }
\newcommand{\e}{\varepsilon}
\newcommand{\id}{\mathrm{id}}
\newcommand{\LD}{\mbox{$({\rm LD})$}}
\newcommand{\ED}{\mbox{$({\rm ED})$}}
\newcommand{\E}{\mathcal E}
\newcommand{\U}{\mathcal U}
\newcommand{\W}{\mathcal W}
\newcommand{\IR}{\mathbb R}
\newcommand{\IH}{\mathbb H}
\newcommand{\IN}{\mathbb N}
\newcommand{\IZ}{\mathbb Z}
\newcommand{\IS}{\mathbb S}
\begin{document}
\baselineskip 6 mm

\thispagestyle{empty}


\title[Local and end deformation theorems for uniform embeddings]
{Local and end deformation theorems for uniform embeddings} 

\author[Tatsuhiko Yagasaki]{Tatsuhiko Yagasaki}
\address{Graduate School of Science and Technology, Kyoto Institute of Technology, Kyoto, 606-8585, Japan}
\email{yagasaki@kit.ac.jp}

\subjclass[2010]{57S05; 58D10, 57N15, 54E40} 
\keywords{Space of uniform embeddings, Group of uniform homeomorphisms, Deformation Theorem, Uniform isotopies}

\maketitle

\begin{abstract} 
A local deformation property for uniform embeddings in metric manifolds (LD) 
is formulated and its behaviour is studied in a formal view point. 
It is shown that any metric manifold with a geometric group action, 
typical metric spaces (Euclidean space, hyperbolic space and cylinders)  
and for $\kappa \leq 0$ the $\kappa$-cone ends over any compact Lipschitz metric manifolds,  
all of them have the property (LD).  
We also formulate a notion of end deformation property for uniform embeddings over proper product ends (ED). 
For example, the 0-cone end over a compact metric manifold has the property (ED) if it has the property (LD). 
It is shown that if a metric manifold $M$ has finitely many proper product ends with the property (ED), then 
the group of bounded uniform homeomorphisms of $M$ endowed with the uniform topology 
admits a strong deformation retraction onto 
the subgroup of bounded uniform homeomorphisms which are identity over those ends. 
We also study a role of uniform isotopies in deformation of uniform homeomorphisms and 
show that Alexander isotopies in $\kappa$-cones induce 
contractions of some subgroups of groups of bounded uniform homeomorphisms. 
\end{abstract} 


\section{Introduction} 

This article is a continuation of study of topological properties of spaces of uniform embeddings and 
groups of uniform homeomorphisms in metric manifolds \cite{Ce, Ki, MSYY, Ya}. 
Since the notion of uniform continuity and uniform topology depends on the choice of metrics, 
it is essential to select reasonable classes of metric manifolds $(M,d)$.   
J. M. Kister \cite{Ki} studied the case of the standard Euclidean space $\IR^n$ using Alexander's trick. 
{A.V.~{\v C}ernavski\u\i} \cite{Ce} considered the case where $M$ is the interior of a compact manifold $N$ and the metric $d$ is a restriction of some metric on $N$. 
Recently, in \cite{Ya} we considered the class of metric covering spaces over compact manifolds 
and obtained a local deformation theorem for uniform embeddings in those spaces. 
From this local deformation theorem we also deduced a global deformation result on groups of uniform homeomorphisms of metric manifolds with finitely many Euclidean ends. 
As an application, 
we showed that the group ${\cal H}_b^u(\IR^n)$ of bounded uniform homeomorphisms of $\IR^n$ endowed with the uniform topology 
is contractible as a topological space. 

In this point, it is important to notice that there are 
two approaches to discuss topological properties of various subgroups $G$ of homeomorphism groups of (compact or non-compact) manifolds. 
One way is to study ordinary topological properties as topological spaces under given topologies. 
Another way is to modify various notions on topological spaces based upon characteristic features of subgroups $G$ 
and study those modified properties. 
A typical example is the notion of paths in $G$. 
For groups of PL-homeomorphisms it is natural and useful to define a path as a PL-isotopy 
(independent of choice of topologies in groups of PL-homeomorphisms). 
Although those modified notions are usually described by the same terminologies as original ones in literatures, 
we should distinguish these two approaches rigorously and should not confuse them (as explained below). 

In the fundamental textbook \cite[Section 5.6]{Ru} a modification of notion of (local) contractibility is considered, 
since homeomorphism groups of non-compact manifolds are not necessarily locally contractible as topological spaces 
under any of the Whitney, uniform and compact-open topologies. 
In this textbook and related papers, a path is interpreted as an isotopy 
and the notion of (local) contractibility is reformulated as (local) existence of continuous selection of isotopies to the identity under a given topology.
Under this weaker notion of (local) contractibility, 
homeomorphism groups of any non-compact manifolds endowed with the Whitney topology is locally contractible \cite{Ce, EK, Ru} and 
the group ${\cal H}_b(\IR^n)$ of bounded homeomorphisms of $\IR^n$ endowed with the uniform topology is contractible by means of Alexander isotopy in $\IR^n$ (cf.\,\cite{Ki}). 
However, this weak notion of local contractibility does not imply even local connectedness of homeomorphism groups as topological spaces 
and the contraction of ${\cal H}_b(\IR^n)$ defined by Alexander isotopy is not continuous with respect to the uniform topology 
(Examples~\ref{exp_Alexander_isotopy_IR^n}, ~\ref{exp_Alexander_isotopy_kappa-cones}). 
These differences between two approaches might lead to some confusion in understanding of topological properties of homeomorphism groups of (non-compact) manifolds. 
Here we emphasize that, throughout \cite{MSYY, Ya} and this paper,  
we have studied the ordinary topological properties of groups of uniform homeomorphisms endowed with the uniform topology  as topological spaces. 
In comparison with classical results, 
this difference of approach yields a contrast between our statements and classical ones in some cases   
and also causes a situation 
that apparently same statements have different meanings in other cases. 

In this article we extract essence from the results obtained in \cite{Ya} and 
formulate notions of local deformation property over subsets and 
global deformation property over product ends for uniform embeddings in metric manifolds (see Sections 3 and 5 for the precise definitions). 
Below we abbreviate the phrase ``the local deformation property for uniform embeddings'' as (LD) and 
``the end deformation property for uniform embeddings'' as (ED). 

Study of basic behaviour of these deformation properties enables us to deduce 
deformation results in more complicated metric manifolds from those in simpler pieces. 
For example, the additivity of (LD) (Proposition~\ref{prop_LD}) can be applied to show that 
any metric manifold with a locally geometric group action has the property (LD) (Theorem~\ref{thm_LD_geom_action}).  
Here a group action on a metric manifold is called (locally) geometric if it is proper, cocompact and (locally) isometric (cf.~Section 4.1). 
In \cite{Ya} we considered metric covering spaces over compact manifolds. 
In terms of covering transformation groups, 
these correspond to metric manifolds with free locally geometric group actions. 
Therefore, this is regarded as a generalization from the free case to the non-free case. 
The key observation is that any metric manifold with a locally geometric group action is 
a finite union of invariant open subsets each of which is the total space of a trivial metric covering projection.  
Thus the non-free case follows from the free case and the additivity of (LD).

\begin{theorem}\label{thm_LD_geom_action_Intro} 
A metric manifold has the property \LD\,if it admits a locally geometric group action. 
\end{theorem}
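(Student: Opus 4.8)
The plan is to deduce the non-free case from the free case treated in \cite{Ya}, together with the additivity of \LD\ (Proposition~\ref{prop_LD}), exactly along the lines of the key observation recorded above. Throughout, let $G$ act locally geometrically on the metric manifold $M$, so that the action is proper, cocompact and locally isometric; since $G$ is discrete and the action is proper, every stabilizer $G_x$ is finite.

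First I would produce the structural decomposition. For each orbit $Gx$ the slice theorem for proper actions furnishes a $G_x$-invariant slice $S_x$ (a small, relatively compact ball about $x$) whose saturation $U_x = G\cdot S_x \cong G\times_{G_x} S_x$ is a $G$-invariant open tube. The assignment $[g,s]\mapsto gG_x$ exhibits $U_x$ as a disjoint union $\bigsqcup_{gG_x\in G/G_x} g\,S_x$ of pairwise isometric copies of $S_x$, indexed by the discrete, uniformly separated set $G/G_x$; properness and local isometry guarantee that distinct copies stay a fixed positive distance apart and that each copy is carried isometrically onto a fixed compact base $\overline{S_x}$. Thus $U_x$ is, as a metric space, the total space of a \emph{trivial} metric covering over the compact manifold $\overline{S_x}$, which is a special case of the metric coverings over compact manifolds treated in \cite{Ya}; the non-trivial stabilizer only folds the base slice and is invisible to this covering structure. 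Hence the free-case hypotheses are met on $U_x$, and $U_x$ has \LD.

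Next, since $M/G$ is compact, finitely many of the open tubes $U_{x_1},\dots,U_{x_k}$ already cover $M$. Each $U_{x_i}$ has \LD\ by the previous step, and the tubes are $G$-invariant open subsets; the additivity of \LD\ (Proposition~\ref{prop_LD}) then assembles these local deformations into a global one, yielding \LD\ for $M$. This is precisely the content of Theorem~\ref{thm_LD_geom_action}, of which the present statement is the qualitative form.

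The main obstacle is the structural step, namely verifying that each invariant tube is genuinely a trivial metric covering over a compact base in the precise sense required by \cite{Ya}. Two points need care: that the metric induced from $M$ on $U_x$ is uniformly equivalent to the disjoint-union metric of $\bigsqcup_{G/G_x}\overline{S_x}$, so that the uniform topology on the embedding spaces is the expected one, for which properness and cocompactness supply the uniform separation of copies together with a uniform bound on their diameters; and that passing from the open slice to its compact closure does not disturb \LD, which is handled by choosing the slices relatively compact and slightly enlarging them. Once these uniform estimates are in place, the reduction to the free case and the subsequent additivity argument are formal.
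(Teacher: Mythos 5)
Your proposal follows essentially the same route as the paper: properness plus local isometry make each orbit uniformly discrete, so the saturation of a small ball about $x$ splits into uniformly separated, pairwise isometric sheets indexed by coset representatives and becomes a trivial metric covering over a compact base (reducing to the free case of \cite{Ya}), after which cocompactness and the additivity of \LD\ (Proposition~\ref{prop_LD}(3)) assemble the global statement. The only point where the paper is more careful than your sketch is that the base of the covering must be a compact metric \emph{manifold}, so it shrinks to a closed $n$-disk $D \subset O_\e(x)$ and uses $\pi^{-1}(D)$ as the uniform neighborhood of the smaller saturated tube $O_\delta(Gx)$, rather than working with the closure of the slice itself.
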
 

\newpage 

It is also shown that the property (LD) in a metric manifold depends only on the property (LD) of  its ends (Corollary~\ref{cor_LD}). 
This observation leads us to a study of deformation property for uniform embeddings over typical metric ends (cf.~Section 4.2). 
The Euclidean space $\IR^n$, the hyperbolic space $\IH^n$ and any cylinders have the property (LD), 
since they admit free geometric group actions. 
Hence all of their ends also have the property (LD). 
Since the $\kappa$-cone $C_\kappa({\Bbb S}^{n-1})$ of 
the unit $(n-1)$-sphere ${\Bbb S}^{n-1}$ with the spherical metric is canonically isometric to 
$\IR^n$ for $\kappa = 0$ and to $\IH^n$ for $\kappa = -1$ (cf.~\cite[p 59]{BH}), 
one can expect that the above results extend to a result on more general cone ends. 
The next result follows from the sphere case and the additivity of (LD) (Proposition~\ref{prop_k-cone_LD}).   

\begin{theorem}\label{thm_kapp-cone_LD} Suppose $(N,d)$ is a compact Lipschitz metric manifold. 
Then the $\kappa$-cone end $C_\kappa(N,d)_1$ has the property $\LD$ when {\rm (i)} $\kappa = 0$ or {\rm (ii)} $\kappa < 0$ and $\partial N = \emptyset$. 
\end{theorem}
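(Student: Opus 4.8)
The plan is to reduce the general cone end to the two model sphere cones $C_0(\IS^{n-1}) \cong \IR^n$ and $C_{-1}(\IS^{n-1}) \cong \IH^n$ by means of bi-Lipschitz charts on $N$, and then to reassemble the local pieces using the additivity of \LD\ (Proposition~\ref{prop_LD}). First I would dispose of the parameter: multiplying a metric by a positive constant is a bi-Lipschitz homeomorphism, and \LD\ is invariant under bi-Lipschitz homeomorphisms since both the uniform structure and the class of uniform embeddings are; hence the case $\kappa < 0$ reduces to $\kappa = -1$. The base case is then immediate, for $\IR^n$ and $\IH^n$ carry free geometric group actions and so possess \LD\ by Theorem~\ref{thm_LD_geom_action}, whence their ends---in particular $C_0(\IS^{n-1})_1$ and $C_{-1}(\IS^{n-1})_1$---inherit \LD\ by Corollary~\ref{cor_LD}.

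Next I would cover the compact Lipschitz manifold $N$ by finitely many charts $U_1, \dots, U_k$, each bi-Lipschitz to an open subset $V_i \subset \IS^{n-1}$ in the interior case and, when $\kappa = 0$, to a spherical half-ball modelled on a cap of the closed hemisphere for those charts meeting $\partial N$. The crucial point---which I expect to form the substance of Proposition~\ref{prop_k-cone_LD}---is that the $\kappa$-cone is functorial for bi-Lipschitz maps on the end: a bi-Lipschitz homeomorphism $U_i \to V_i$ should induce a bi-Lipschitz homeomorphism $C_\kappa(U_i)_1 \to C_\kappa(V_i)_1$ onto an open subset of the corresponding end of $\IR^n$ or $\IH^n$. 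Granting this, each $C_\kappa(U_i)_1$ is bi-Lipschitz to a subspace carrying \LD, and by bi-Lipschitz invariance each local cone end itself has \LD. Since the open sets $C_\kappa(U_i)_1$ cover the cone end $C_\kappa(N,d)_1$, a single application of the additivity of \LD\ (Proposition~\ref{prop_LD}) then delivers \LD\ for the whole cone end, in both case (i) and case (ii).

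The hard part will be the bi-Lipschitz functoriality of the cone on the end. Away from the cone point the $\kappa$-metric is comparable to $dr^2 + f_\kappa(r)^2\, d_N^2$, with $f_0(r) = r$ and $f_{-1}(r) = \sinh r$, so a bi-Lipschitz distortion of $d_N$ does propagate to a bi-Lipschitz distortion of the cone metric; the work lies in controlling the relevant (Euclidean or hyperbolic) law of cosines uniformly for $r \geq 1$ and in checking that boundary charts behave. This is exactly where the two hypotheses diverge: for $\kappa = 0$ a half-ball chart embeds in the flat cone as a Euclidean half-space sector, which still carries \LD, whereas for $\kappa < 0$ the hyperbolic cone admits no such boundary model, which is what forces the restriction $\partial N = \emptyset$ so that only interior charts---with genuine $\IH^n$ models---need be used.
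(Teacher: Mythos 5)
Your proposal follows essentially the same route as the paper's proof of Proposition~\ref{prop_k-cone_LD}: bi-Lipschitz charts on $N$, Lipschitz functoriality of the $\kappa$-cone construction (this is exactly the paper's Lemma~\ref{lem_Lip}, proved via the identities for $\lambda_\kappa$ in Remark~\ref{rem_kappa-cone}), reduction to the model cones $C_\kappa(\IS^n)$ and $C_0(\IS^n_{\geq 0})$, which carry \LD\ because they are isometric to $M_\kappa^{n+1}$ and $\IR^{n+1}_{\geq 0}$ (Lemma~\ref{lem_cone_sph}), and finally reassembly by the additivity of \LD. Your diagnosis of why $\partial N = \emptyset$ is needed for $\kappa<0$ also matches the paper. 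The only structural difference is cosmetic: you chart $N$ directly into the sphere, whereas the paper charts into $\IR^n_{\geq 0}$ (per the definition of Lipschitz metric manifold) and then carries a large ball onto a spherical cap by a diffeomorphism (Lemma~\ref{lem_Eucliedan}).

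The one step in your sketch that would fail as literally stated is the final one: ``since the open sets $C_\kappa(U_i)_1$ cover the cone end, a single application of additivity delivers \LD.'' Proposition~\ref{prop_LD}\,(3)(ii) does not apply to a mere open cover; each piece must admit a \emph{uniform} neighborhood satisfying \LD, and an open chart $C_\kappa(U_i)_1$ is not uniformly contained in anything useful near its frontier. The paper repairs this by a two-stage shrinking: compact $C_x \subset {\rm Int}_N D_x \subset U_x$ with $D_x$ compact, so that $C_\kappa(C_x)_2 \subset_u C_\kappa(D_x)_1 \subset_u C_\kappa(U_x)^\times$ inside $C_\kappa(N)^\times$ (the angular gap between $C_x$ and $N - D_x$ produces a positive metric gap in the cone for $t \geq 1$). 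Additivity is then applied to $C_\kappa(N)_2 = \bigcup_i C_\kappa(C_{x_i})_2$, and the passage back from level $2$ to level $1$ uses the relative compactness of $C_\kappa(N)_{[1,2]}$ together with Proposition~\ref{prop_LD}\,(4). These shrinking and level-shifting steps are routine but not optional, and they are where the paper's proof does its bookkeeping.
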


Here, a Lipschitz metric $n$-manifold is a metric $n$-manifold (possibly with boundary) 
which admits an atlas in which each chart is a (locally) bi-Lipschitz equivalence onto an open subset of the half space $\IR^n_{\geq 0}$. 
For example, any Riemannian manifold is a Lipschitz metric manifold. 

Next we study basic behaviour of the property (ED). 
In \cite{Ya} we deduced the property (ED) in Euclidean ends from the property (LD). 
This extends to the next form (Example~\ref{exp_ED=>ED}). 

\begin{proposition}
Suppose $(N, d)$ is a compact metric manifold. 
Then the $0$-cone end $C_0(N, d)_1$ has the property $\ED$ if it has the property $\LD$. 
\end{proposition}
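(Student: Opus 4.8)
The plan is to exploit the self-similar structure of the $0$-cone. Recall that $C_0(N,d)$ carries the radial homotheties $\sigma_\lambda\colon [0,\infty)\times N \to [0,\infty)\times N$, $\sigma_\lambda(t,x)=(\lambda t, x)$, which multiply the cone metric by the factor $\lambda$; the end $C_0(N,d)_1$ is the subcone $\{t\ge 1\}$ and is exhausted by the scaled annular blocks $B_k=\sigma_{2^k}(B)$, where $B=\{1\le t\le 2\}\times N$ is compact since $N$ is compact. Since $C_0(\IS^{n-1})\cong\IR^n$, this is precisely the framework in which the Euclidean end case was treated in \cite{Ya}, and the strategy is to carry that construction over, the only new input being an arbitrary compact $N$ in place of the sphere.

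First I would apply \LD\ over a neighborhood of the fundamental block $B$ to obtain a continuous selection assigning, to each uniform embedding $f$ sufficiently uniformly close to the inclusion, a uniform isotopy deforming $f$ to the inclusion over $B$ and supported in a slightly larger annulus $B^+$. Conjugating this selection by $\sigma_{2^k}$ produces, for each $k\ge 0$, a local deformation over the block $B_k$ supported in $B_k^+=\sigma_{2^k}(B^+)$. The end deformation is then obtained by performing all of these block deformations simultaneously: since the enlarged supports $B_k^+$ have locally finite overlap along the radial direction, the individual isotopies combine into a single isotopy carrying $f$ to the inclusion over all of $\{t\ge 1\}$, which is the required witness for \ED.

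The decisive point, and the main obstacle, is to keep everything uniform and continuous in the uniform topology over the infinitely many blocks. Because $\sigma_\lambda$ scales the cone metric by $\lambda$, an embedding lying within $\e$ of the inclusion in the cone metric pulls back under $\sigma_{2^k}$ to an embedding within $\e/2^k$ of the inclusion on $B$; hence the block deformation produced by the \LD-selection at level $k$ acts on an input that is correspondingly closer to the inclusion. Provided the \LD-selection is arranged so that the displacement of its output is controlled by the closeness of its input, the conjugated displacement over $B_k$---which is multiplied back by $2^k$---remains uniformly bounded, and the combined family is a genuine uniform isotopy. Verifying this scale-compatibility of the selection and the resulting uniform continuity of $f\mapsto\{f_t\}$ is the crux; once it is in place, the patching along the overlaps $B_k^+\cap B_{k+1}^+$ and the identification with the definition of \ED\ are routine.
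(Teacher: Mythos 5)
Your plan founders on the point you yourself identify as the crux, and that point cannot be repaired from the hypotheses: the property \LD\ supplies, for a given admissible tuple, \emph{some} neighborhood $\W$ of the inclusion and a continuous deformation $\chi$ with $\chi_t(i)=i$, but it gives no quantitative, Lipschitz-type control of the form $d(\chi_t(f),i)\le C\,d(f,i)$ near $i$. Continuity yields only that the displacement is $o(1)$ as $f\to i$, which is not enough: at level $k$ your input is within $\e/2^k$ of the inclusion, and after conjugating back by $\sigma_{2^k}$ the displacement of the output is multiplied by $2^k$, so without a genuine $O(\delta)$ bound the rescaled displacements --- and likewise the moduli of uniform continuity needed to assemble infinitely many overlapping block isotopies into a single uniform embedding --- are uncontrolled as $k\to\infty$. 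Nothing in the definition of \LD\ lets you ``arrange'' the selection to have this scale-compatibility, so the patching step is not routine; it is exactly where the argument breaks.

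The gap is avoidable because the infinite block decomposition is unnecessary. Two observations reduce the proof to a single step. First, \ED\ only asks for the deformation to reach the identity over \emph{some} cofinal closed $H\subset F$, and a single tail $L_{4\beta}=C_0(N)_{4\beta}$ is cofinal, since its complement in $L_1$ is relatively compact ($N$ compact, metric proper). Second, the set $X$ in an admissible tuple for \LD\ need not be compact or bounded: one may apply \LD\ directly to the unbounded tuple $(L_4,L_3,L_2,\emptyset,\emptyset)$, obtaining $\e\in(0,1)$ and an admissible deformation $\chi:\E^u_\ast(i_{L_2},\e;L_2,L_1)\times[0,1]\to\E^u_\ast(i_{L_2},1;L_2,L_1)$ which is already the identity over the whole infinite tail $L_4$ at time $1$. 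Given $F$ and $\alpha$, the paper then chooses one $\beta\ge1$ with $\e\beta>\alpha$ and $L_\beta\subset F$ and conjugates $\chi$ by the single similarity $k_\beta(tx)=(\beta t)x$; this converts the $\e$-ball of the domain into a ball of radius $\e\beta>\alpha$ and sends the target into the $\beta$-ball, yielding the required admissible deformation over $H=L_{4\beta}$ after extending by $f$ on $F-L_{2\beta}$. No patching, and no uniform estimates over infinitely many scales, are needed.
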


In particular, if $(N,d)$ is a compact Lipschitz metric manifold, then 
$C_0(N,d)_1$ has the property $\ED$.
In \cite{Ya} 
we showed that if $(M ,d)$ is a metric manifold with finitely many Euclidean ends, then 
the group ${\cal H}^u_b(M, d)$ of bounded uniform homeomorphisms of $(M, d)$ endowed with the uniform topology 
admits a strong deformation retraction onto 
the subgroup of ${\cal H}^u_b(M, d)$ consisting of bounded uniform homeomorphisms which are identity over those ends. 
This result generalizes to the next form. 

\begin{theorem}\label{thm_ED_sdr_H}
Suppose $(M, d)$ is a metric manifold and 
$L(1), \cdots, L(m) \subset M$ are pairwise disjoint isolated proper product ends of $M$ such that   
$(L(i), d|_{L(i)})$ has the property $\ED$ and 
$L(i)_2 \subset {\rm Int}_M L(i)$ for $i =1, \cdots, m$. 
Let $L_r = L(1)_r \cup \cdots \cup L(m)_r$ $(r \geq 1)$. 
Then there exists a strong deformation retraction 
$\phi$ of ${\cal H}^u_b(M, d)$ onto ${\cal H}^u_b(M, d; L_3)$ 
such that 
\begin{enumerate} 
\item for each $(h, t) \in {\cal H}^u_b(M, d) \times [0,1]$ \hspace{3mm} 
\begin{tabular}[t]{c@{\ \,}l}
{\rm (i)} & $\phi_t(h) = h$ \ on \ $h^{-1}(M - {\rm Int}_M L_2) - {\rm Int}_M L_2$, \\[2mm]  
{\rm (ii)} & if $h = \id$ on $\partial M$, then $\phi_t(h) = \id$ on $\partial M$, 
\end{tabular}
\vspace{1mm} 
\item for each $  \alpha > 0$ there exists $ \beta > 0$ such that  
$$\phi({\cal H}^u(\id_M, \alpha; (M, d)) \times [0,1]) \subset {\cal H}^u(\id_M, \beta; (M, d)).$$ 
\end{enumerate} 
\end{theorem}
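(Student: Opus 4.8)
The plan is to treat the theorem as a packaging of the end data supplied by the property $\ED$: the genuine analytic content sits in $\ED$ (already established for the relevant ends in the preceding sections), while here one only has to localize at each end, apply the end deformation, and reassemble. Because $L(1),\dots,L(m)$ are pairwise disjoint and isolated, the regions in which the deformation is nontrivial will be mutually disjoint neighbourhoods of the individual ends. Hence it suffices to build the deformation over a single end $L=L(i)$ and then to define the global $\phi$ by performing these disjointly supported deformations simultaneously and leaving $h$ untouched away from $\bigcup_i \operatorname{Int}_M L(i)_2$.

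First I would exploit boundedness to justify this localization. Every $h\in{\cal H}^u(M,d)_b$ has finite displacement $\sup_x d(h(x),x)$, and along a proper product end the levels $L(i)_r$ recede to infinity; so bounded displacement forces $h$ to carry each $L(i)_r$ into a bounded neighbourhood of itself. Combined with the hypothesis $L(i)_2\subset\operatorname{Int}_M L(i)$ and the product structure of the end, this guarantees that the part of $h$ that must be altered, namely its behaviour at and below level $2$ where either $x$ or $h(x)$ lies in $\operatorname{Int}_M L_2$, genuinely lives inside the product end, which is exactly the domain on which the $\ED$ deformation operates.

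Next, by the property $\ED$ for $(L,d|_L)$ I obtain a deformation, continuous in the uniform topology and sending displacement bound $\alpha$ to some bound $\beta_i(\alpha)$, which deforms the restriction of such an $h$ to a homeomorphism that is the identity over $L(i)_3$, while fixing every point outside $\operatorname{Int}_M L(i)_2$ and holding fixed any map already equal to $\id$ over the end. Writing the result as $\phi^{(i)}_t(h)$, I set $\phi_t(h)=\phi^{(i)}_t(h)$ on each $L(i)$ and $\phi_t(h)=h$ elsewhere; the support condition makes the pieces agree across the interfaces $\partial\operatorname{Int}_M L(i)_2$, so $\phi_t(h)$ is a well-defined homeomorphism. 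Then $\phi_0=\id$, $\phi_1(h)\in{\cal H}^u(M,d;L_3)_b$, and since $\ED$ fixes maps that are already the identity over the end, $\phi_t$ restricts to the identity on ${\cal H}^u(M,d;L_3)_b$, so $\phi$ is a strong deformation retraction. Condition (i) is immediate from the support of each local deformation, (ii) follows by arranging the $\ED$ deformation to fix $\partial M$ whenever $h$ does, and for (2) one takes $\beta=\max_{1\le i\le m}\beta_i(\alpha)$, which is precisely where the finiteness of the number of ends enters.

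The delicate point is not the construction but the global continuity of $\phi$ in the uniform topology, together with the requirement that $\phi_t(h)$ remain a \emph{bounded uniform} homeomorphism at every stage. Each local deformation is uniformly continuous in $(h,t)$ by $\ED$, and away from the ends $\phi_t(h)=h$; the work is to verify that these two descriptions match continuously across the interfaces $\partial\operatorname{Int}_M L(i)_2$ as $(h,t)$ vary, and that the deformed map always has bounded displacement and a uniformly continuous inverse, so that one never leaves ${\cal H}^u(M,d)_b$. The uniform, as opposed to merely weak, continuity is exactly the feature emphasized in the introduction, so this estimate, controlled by the displacement bound of $h$ and the finiteness of the ends, is where I expect the main effort to lie.
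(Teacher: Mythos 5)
Your overall architecture --- localize at each end, use disjointness and isolation to glue, take the maximum of the $\beta_i$ for condition (2) --- matches the paper, which derives the theorem by composing end-by-end deformations (Theorem~\ref{thm_ED_sdr}, via Proposition~\ref{prop_ED_end}) and then restricting from $\E^u_\ast(M,M)_b$ to ${\cal H}^u(M,d)_b$. But there is a genuine gap at the point where you invoke $\ED$: you treat it as directly supplying, for \emph{every} $h\in{\cal H}^u(M,d)_b$, a deformation to a homeomorphism that is the identity over the fixed collar $L(i)_3$. Definition~\ref{def_(ED)} gives much less. For each cofinal $F$ and each $\alpha>0$ it produces an admissible deformation defined \emph{only on the $\alpha$-ball} $\E^u_\ast(i_F,\alpha;F,(L,d))$, and the cofinal set $H$ on which the deformation terminates at the identity \emph{depends on $\alpha$} --- for larger displacement it will in general sit deeper in the end, not at level $3$. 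Two constructions are needed to bridge this to your claim. First, the $\alpha$-dependent identity region must be slid back to a fixed level $L_s$; the paper does this in Lemma~\ref{lem_ED_end} using the push-down isotopy of Lemma~\ref{lem_p-isotopy}, which exploits the product structure of the end and costs an extra displacement ${\rm diam}\,L_{[r,w]}$. Second, and more seriously, since each $\alpha$ yields a different deformation of a different ball, one cannot simply ``apply $\ED$ with $\alpha$ equal to the displacement of $h$'': that assignment is not a single continuous homotopy on the whole group. Proposition~\ref{prop_ED_end} resolves this by fixing an increasing sequence $\alpha_i$ with $\alpha_i+2<\alpha_{i+1}$, cutting off the $i$-th deformation with a Urysohn function $\lambda_i$ supported in $\E(\alpha_i+2)$, and composing the resulting infinite sequence of homotopies over a reparametrized interval $[0,\infty]$; the composition stabilizes on each ball $\E(\alpha_i)$ precisely because each stage fixes maps that are already the identity on $L_s$. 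This telescoping local-to-global passage is the main content of the proof and is absent from your proposal.

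By contrast, the issues you flag as the delicate ones --- continuity across the interfaces $\partial\,{\rm Int}_M L(i)_2$ and remaining inside ${\cal H}^u(M,d)_b$ --- are handled in the paper by routine means: the deformations are conjugations by uniform homeomorphisms supported in the ends, so continuity follows from Lemma~\ref{lemma_conti} and the preservation of conditions (1)(i)--(iii) under composition is Remark~\ref{rem_composition}. The step your argument cannot survive without is the globalization over unbounded displacement described above.
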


See Sections 2 and 5 for the precise definitions of terminologies used in Theorem~\ref{thm_ED_sdr_H}. 
In Section 5 we obtain a more general end deformation theorem for uniform embeddings (Theorem~\ref{thm_ED_sdr}). 

We also study some properties of uniform isotopies from the viewpoint of deformation of uniform homeomorphisms. 
We can interpret (local) contractibility of 
the group ${\cal H}^u_b(X, d)$ of bounded uniform homeomorphisms of a metric space $(X,d)$ 
in terms of a continuous selection of {\em uniform} isotopies on $(X,d)$ (Corollary~\ref{cor_LC_uniform isotopy}).
As a typical example, we discuss basic properties of Alexander isotopies in the $\kappa$-cone $C_\kappa(X, d)$ $(\kappa \leq 0)$ over a compact metric space $(X,d)$. 
First we note that Alexander isotopies only provide a continuous selection of {\em levelwise bounded} isotopies on $C_\kappa(X, d)$, 
that is, a contraction of ${\cal H}_b(C_\kappa(X, d))$ in the sense of \cite{Ru}.  
So, it is worth investigating  the proper subset ${\cal G}$ 
consisting of $h \in {\cal H}(C_\kappa(X, d))$ such that the Alexander isotopy of $h$ is a uniform isotopy on $C_\kappa(X, d)$. 
It is shown that ${\cal G}$ is a maximal subgroup of ${\cal H}^u_b(C_\kappa(X, d))$ on which Alexander isotopies induce a contraction in the usual sense 
and that ${\cal G}$ includes the subgroup ${\cal H}_0(C_\kappa(X, d))$ of homeomorphisms asymptotic to $\id$ at $\infty$ 
(Propositions~\ref{prop_G}, ~\ref{prop_contraction_H_0}). 

This paper is organized as follows. 
Section 2 includes basics on metric spaces and uniform embeddings. 
In Section 3 we study formal behaviour of the property (LD) with respect to restriction, union, ends and uniform equivalence. 
In Sections 4 we discuss some examples of metric manifolds with the property (LD), including 
metric manifolds with locally geometric group actions (Section 4.1), the ends of Euclidean space, hyperbolic space and cylinders (Section 4.2) and the $\kappa$-cone ends over compact Lipschitz metric manifolds (Section 4.3).
Section 5 is devoted to a study of basic behaviour of the property (ED) and the proof of Theorems~\ref{thm_ED_sdr_H} and~\ref{thm_ED_sdr}. 
Sections 6 and 7 include results on uniform isotopies and Alexander isotopies. 
 

\section{Metric spaces and uniform embeddings} 

This section includes basics on metric spaces and uniform embeddings needed in this paper. 

\subsection{Conventions} \mbox{} 

In this article, maps between topological spaces are always assumed to be continuous. 
For a topological space $X$ and a subset $A$ of $X$, 
the symbols ${\rm Int}_X A$,  $cl_X A$ and ${\rm Fr}_X A$ denote the topological 
interior, closure and frontier of $A$ in $X$. 
Let ${\cal K}(X)$ denote the collection of all compact subsets of $X$. 

As usual, any metric space $(X, d)$ is given the topology $\tau_d$ induced by the metric $d$, and 
a metric on a topological space $(Y, \tau)$ means a metric $d$ on the set $Y$ with $\tau_d = \tau$. 
(Frequently, the symbols $d$ and $\tau_d$ are omitted when they are understood implicitly.)  
A metric space is said to be proper if any bounded closed subset is compact. 

Suppose $(X, d)$ is a metric space. 
The metric of any subset $A$ of $X$ is given by the restriction $d|_A$ of the metric $d$ to $A$. 
The distance between two subsets $A$ and $B$ of $X$ is defined by $d(A, B) = \inf \{ d(a,b) \mid a \in A, b \in B \}.$ 
For $x \in X$ and $\e \in [0, \infty]$ let $O_\e(x)$ and $B_\e(x)$ denote the open and closed $\e$-neighborhood of $x$ in $(X,d)$ respectively 
(i.e., $O_\e(x) = \{ y \in X \mid d(x, y) < \e \}$, $B_\e(x) = \{ y \in X \mid d(x, y) \leq \e \}$, $O_\infty(x) = X$). 
The open $\e$-neighborhood of $A$ in $(X,d)$ is defined by 
$$O_\e(A) = O_\e(A, X) = \{ x \in X \mid d(x,A) < \e \}.$$ 
Note that (i) (a) $d(x,A) < \e$ if and only if $d(x, a) < \e$ for some $a \in A$, 
(b) $O_\e(O_\delta(A)) \subset O_{\e+\delta}(A)$ and 
(ii) for $A \subset B \subset X$ (a) $O_\e(A) \subset B$ $\Longleftrightarrow$ $d(A, X - B) \geq \e$ 
$\Longleftrightarrow$ $O_\e(X - B) \subset X - A$, 
(b) $O_\e(A) \subset O_\e(B)$ and $O_\e(A, B) = O_\e(A) \cap B$.   
We write $A \subset_u B$ in $X$ and call $B$ a uniform neighborhood of $A$ in $X$ if $O_\e(A) \subset B$ for some $\e > 0$.  
Note that if $A \subset_u B$ in $X$, then $A \cap Y \subset_u B \cap Y$ in $Y$ for any $Y \subset X$. 

We say that a subset $A$ of $X$ is $\e$-discrete if $d(x,y) \geq \e$ for any two distinct points $x, y \in A$ and 
that $A$ is uniformly discrete if it is $\e$-discrete for some $\e > 0$. 
More generally a family $\{ F_\lambda \}_{\lambda \in \Lambda}$ of subsets of $X$ is said to be $\e$-discrete 
if $d(F_\lambda, F_\mu) \geq \e$ for any two distinct $\lambda, \mu \in \Lambda$. 

\subsection{Maps between metric spaces} \mbox{} 

Suppose $(X,d)$ and $(Y,\rho)$ are metirc spaces. 
A map $f : (X,d) \to (Y,\rho)$ is said to be 
(i) uniformly continuous if for each $\e > 0$ there is a $\delta > 0$ such that 
if $x,x' \in X$ and $d(x,x') < \delta$ then $\rho(f(x), f(x')) < \e$, 
(ii) coarsely uniform if for each $R > 0$ there is a $S > 0$ such that 
if $x,x' \in X$ and $d(x,x') < R$ then $\rho(f(x), f(x')) < S$, 
(iii) Lipschitz if there exists $K > 0$ such that $\rho(f(x), f(x')) \leq Kd(x,x')$ for any $x, x' \in X$. 
The constant $K$ is called a Lipschitz constant for $f$. 
Note that 
(a) if $A \subset_u U \subset X$ and both $f|_U$ and $f|_{X - A}$ are uniformly continuous, then $f$ is also uniformly continuous, 
(b) if $f$ is uniformly continuous and $A \subset_u B \subset Y$, 
then $f^{-1}(A) \subset_u f^{-1}(B) \subset X$ and 
(c) if $f$ is Lipschitz, then $f$ is both uniformly continuous and coarsely uniform. 

A map $f : (X,d) \to (Y,\rho)$  is called a uniform (coarsely uniform, Lipschitz) homeomorphism 
if $f$ is bijective and both $f$ and $f^{-1}$ are uniformly continuous (coarsely uniform, Lipschitz, respectively). 
(The prefix ``bi'' is sometimes added to emphasize that $f^{-1}$ also satisfies the same condition.)
A uniform embedding is a uniform homeomorphism onto its image. 
A map $h : (X,d) \to (Y,\rho)$ is called a similarity transformation if $h$ is bijective and there exists $K > 0$ such that $\rho(h(x), h(x')) = Kd(x,x')$ for any $x, x' \in X$. 

In \cite{Ya} we introduced the notion of metric covering projections  
as a natural metric version of Riemannian coverings in the smooth category. 
For the basics on covering spaces, we refer to \cite[Chapter 2, Section 1]{Sp}.

\begin{defn} A map $\pi : (X,d) \to (Y, \rho)$ is called a metric covering projection 
if it satisfies the following conditions: 
\begin{itemize} 
\item[$(\ast)_1$] There exists an open cover ${\cal U}$ of $Y$ such that for each $U \in {\cal U}$ the inverse $\pi^{-1}(U)$ is the disjoint union of open subsets of $X$ each of which is mapped isometrically onto $U$ by $\pi$. 
\item[$(\ast)_2$] For each $y \in Y$ the fiber $\pi^{-1}(y)$ is uniformly discrete in $X$. 
\item[$(\ast)_3$] $\rho(\pi(x), \pi(x')) \leq d(x, x')$ for any $x, x' \in X$. 
\end{itemize}
\end{defn}

\subsection{Metric manifolds} \mbox{} 

An $n$-manifold means a topological $n$-manifold possibly with boundary. 
For a manifold $M$, the symbols $\partial M$ and ${\rm Int}\,M$ denote the boundary and interior of $M$ as a manifold.  

A metric $n$-manifold means an $n$-manifold with a fixed compatible metric. 
A basic model is the simply connected complete Riemannian $n$-manifold 
$M_\kappa^n$ with constant sectional curvature $\kappa$. 
For example, $M_\kappa^n = \IS^n, \IR^n, \IH^n$ for $\kappa = 1, 0, -1$ respectively. 
Here, $\IR^n$ is the Euclidean $n$-space with the standard Euclidean metric $d_0$, $\IH^n$ is the hyperbolic $n$-space and 
${\Bbb S}^n = \{ x \in \IR^{n+1} \mid \| x \| = 1 \}$ is the standard unit $n$-sphere in $\IR^{n+1}$ 
with the standard spherical metric $d_1$ defined by 
$\cos d_1(x, y) = \langle x, y\rangle$, where $\langle \ , \ \rangle$ is the standard Euclidean inner product. 
Note that $M_{\pm \kappa}^n$ is homothetic to $M_{\pm 1}^n$ for any $\kappa > 0$. 
Let $\IR^n_{\geq 0} = \{ x \in \IR^n \mid x_n \geq 0\}$ denote the closed half space in $\IR^n$ with the standard metric $d_0$. 

\begin{defn} A Lipschitz metric $n$-manifold is a metric $n$-manifold $(M, d)$ which satisfies the following condition: 
\begin{itemize}

\item[$(\natural)$] For each point $x \in M$ there exists an open neighborhood $U$ of $x$ in $M$ and an open subset $V$ of $\IR^n_{\geq 0}$ such that 
$(U,d|_U)$ is bi-Lipschitz homeomorphic to $(V, d_0|_V)$. 
\end{itemize}
\end{defn} 

Any Riemannian manifold with the induced path-length metric is a Lipschitz metric manifold. 


\subsection{Spaces of uniform embeddings} \mbox{} 

Suppose $(X,d)$ and $(Y,\rho)$ are metirc spaces. 
Let ${\cal C}((X,d), (Y,\rho))$ denote the space of continuous maps $f : (X,d) \to (Y,\rho)$.  
The metric $\rho$ on $Y$ induces the sup-metric $\tilde{\rho}$ on ${\cal C}(X, Y)$ defined by 
$$\tilde{\rho}(f,g) = \sup \{ \rho(f(x), g(x)) \mid x \in X \} \in [0, \infty].$$ 
The topology on ${\cal C}((X,d), (Y,\rho))$ induced by this sup-metric $\tilde{\rho}$ is called the uniform topology. 
Below the space ${\cal C}((X,d), (Y,\rho))$ and its subspaces are endowed with the sup-metric $\tilde{\rho}$ and 
the uniform topology, unless otherwise specified. 
(When we need to emphasize this point, we add the subscript $u$ as ${\cal C}((X,d), (Y,\rho))_u$.) 
For notational simplicity, the metric $\tilde{\rho}$ is denoted by the same symbol $\rho$. 
Let ${\cal C}^u((X,d), (Y,\rho))$ denote the subspace consisting of uniformly continuous maps $f : (X,d) \to (Y,\rho)$.  
For any metric spaces $(X_1,d_1)$, $(X_2,d_2)$, $(X_3,d_3)$ the following map is continuous : 
$${\cal C}^u((X_1,d_1), (X_2,d_2)) \times {\cal C}^u((X_2,d_2), (X_3,d_3)) \lra {\cal C}^u((X_1,d_1), (X_3,d_3)) : (f, g) \longmapsto gf.$$ 

\begin{lemma}\label{lemma_conti} $($\cite[Lemma 2.5]{Ya}$)$ Suppose $P$ is a topological space, 
$f : P \to {\cal C}(X_1, X_2)$, $g : P \to {\cal C}(X_1, X_3)$ are continuous maps 
and $h : P \to {\cal C}^u(X_2, X_3)$ is a function. 
If $f_p$ is surjective and $h_pf_p = g_p$ for each $p \in P$, then $h$ is continuous. 
\end{lemma}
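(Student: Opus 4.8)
The plan is to verify continuity of $h$ directly from the definition of the uniform topology, reducing everything to a sup-metric estimate and exploiting the fact that, because each $f_p$ is surjective, the relation $h_pf_p = g_p$ determines $h_p$ completely from $f_p$ and $g_p$. Fix a basepoint $p_0 \in P$ and $\e > 0$; I want to produce a neighborhood $W$ of $p_0$ so that $d_3(h_p(y), h_{p_0}(y)) < \e$ for all $y \in X_2$ and all $p \in W$, where $d_2$ and $d_3$ denote the metrics on $X_2$ and $X_3$.

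The central computation uses surjectivity of $f_p$: given $y \in X_2$, choose $x \in X_1$ with $f_p(x) = y$, so that $h_p(y) = h_p(f_p(x)) = g_p(x)$, while $h_{p_0}(y) = h_{p_0}(f_p(x))$ and $h_{p_0}(f_{p_0}(x)) = g_{p_0}(x)$. Inserting $g_{p_0}(x) = h_{p_0}(f_{p_0}(x))$ and applying the triangle inequality gives
$$
d_3\big(h_p(y), h_{p_0}(y)\big) \leq d_3\big(g_p(x), g_{p_0}(x)\big) + d_3\big(h_{p_0}(f_{p_0}(x)), h_{p_0}(f_p(x))\big).
$$
The first summand is at most the sup-distance between $g_p$ and $g_{p_0}$ in ${\cal C}(X_1, X_3)$, which continuity of $g$ at $p_0$ makes smaller than $\e/2$ for $p$ in a suitable neighborhood $W_1$. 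For the second summand I first use that $h_{p_0}$ is uniformly continuous to pick $\delta > 0$ with $d_3(h_{p_0}(a), h_{p_0}(b)) < \e/2$ whenever $d_2(a,b) < \delta$; then continuity of $f$ at $p_0$ yields a neighborhood $W_2$ on which $\sup_{x} d_2(f_{p_0}(x), f_p(x)) < \delta$, so that $d_2(f_{p_0}(x), f_p(x)) < \delta$ holds simultaneously for every $x$. Taking $W = W_1 \cap W_2$ bounds both summands by $\e/2$, and since the estimate holds for each $y$ with the same $W$, one obtains $\sup_{y} d_3(h_p(y), h_{p_0}(y)) \leq \e$ on $W$, i.e. continuity of $h$ at $p_0$.

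The main obstacle, and the only place the hypotheses are used in an essential, non-formal way, is the second summand, where I must convert closeness of $f_p(x)$ and $f_{p_0}(x)$ as points of $X_2$ into closeness of their $h_{p_0}$-images in $X_3$. This is exactly what uniform continuity of $h_{p_0}$ provides, and it is crucial that a single $\delta$ works for all $x$ at once: the control coming from continuity of $f$ is a sup over $X_1$, so the pointwise choice of $x$ (which depends on $y$) never forces $\delta$ to shrink. Continuity of $g$ and surjectivity of $f_p$ enter only formally, the former to absorb the $g_p \to g_{p_0}$ discrepancy and the latter to guarantee that $h_p$ is legitimately recovered as $g_p$ composed with a section of $f_p$.
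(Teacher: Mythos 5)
Your proof is correct, and it is the standard argument for this lemma: the paper itself gives no proof here, quoting the statement from \cite[Lemma 2.5]{Ya}, whose proof proceeds by exactly this decomposition. You correctly identify the two essential points --- surjectivity of $f_p$ to rewrite $h_p(y)$ as $g_p(x)$, and uniform continuity of $h_{p_0}$ to convert the sup-norm closeness of $f_p$ to $f_{p_0}$ into a bound that is uniform in the chosen preimage $x$ --- so nothing further is needed.
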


Let ${\cal H}(X,d)$ denote the group of homeomorphisms $h$ of $(X,d)$ onto itself.  
This group and its subgroups are endowed with the sup-metric and the uniform topology 
(as the subspaces of ${\cal C}((X,d), (X,d))$). 
Let ${\cal H}_b(X,d) := \{ h \in {\cal H}(X,d) \mid d(h, \id_X) < \infty \}$ (the subgroup of bounded homeomorphisms). 

Let ${\cal H}^u(X,d)$ denote the subgroup of uniform homeomorphisms of $(X,d)$ onto itself.  
It is a topological group, 
while the whole group ${\cal H}(X,d)$ is not necessarily a topological group under the uniform topology. 
We also use the following notations: 
$$\mbox{${\cal H}^u_b(X,d) := {\cal H}^u(X,d) \cap {\cal H}_b(X,d)$, \ \ 
${\cal H}^u(X,d; C) := \{ h \in {\cal H}^u(X,d) \mid h = \id \text{ on } C\}$ \ for $C \subset X$}$$  
and for $\alpha > 0$ let ${\cal H}^u(\id_X, \alpha; X, d)$ denote the open $\alpha$-neighborhood of $\id_X$ in ${\cal H}^u(X, d)$. 
The group ${\cal H}^u_b(X,d)$ is an open (and closed) subgroup of ${\cal H}^u(X,d)$, so 
it includes the connected component of $\id_X$ in ${\cal H}^u(X,d)$. 
The group ${\cal H}^u_b(X,d)$ also includes the subgroups ${\cal H}_c(X,d) \subset {\cal H}_0(X,d)$, where 
\begin{itemize}
\item[(a)] ${\cal H}_0(X, d) = \{ h \in {\cal H}(X, d) \mid h : (\ast) \}$ \ (the subgroup of homeomorphisms asymptotic to $\id$): \\ 
\hspace*{3mm} $(\ast)$ : for any $\e > 0$ there exists $K \in {\cal K}(X,d)$ such that $d(h(x), x) < \e$ $(x \in X - K)$, 
\item[(b)] ${\cal H}_c(X, d) = \{ h \in {\cal H}(X, d) \mid {\rm supp}\,h \in {\cal K}(X,d)\}$ \ \ (${\rm supp}\,h = cl_X \{ x \in X \mid h(x) \neq x \}$). 
\end{itemize} 
Note that ${\cal H}_0(X, d)$ is a normal subgroup of ${\cal H}^u(X, d)$ 
and ${\cal H}_c(X, d)$ is a normal subgroup of ${\cal H}(X, d)$. 

For subsets $F, C \subset X$,  
let $\E^u(F, X, d; C)$ denote the space of uniform embeddings 
$$\mbox{$f : (F,d|_F) \to (X, d)$ \ \ with \ $f = \id$ \ on $F \cap C$}$$    
with the sup-metric and the uniform topology. 

Suppose $(M,d)$ is a metric $n$-manifold and $F, C \subset M$. 
An embedding $f : F \to M$ is said to be proper if $f^{-1}(\partial M) = F \cap \partial M$. 
Let ${\mathcal E}_\ast^u(F, (M, d); C)$ 
denote the subspace of 
${\mathcal E}^u(F, (M, d); C)$ consisting of proper uniform embeddings 
and for $\alpha > 0$ let $\E^u_\ast(i_F, \alpha; F, (M, d); C)$ denote 
the open $\alpha$-neighborhood of the inclusion map $i_F : F \subset M$ in the space $\E^u_\ast(F, (M, d); C)$. 
Let $\E^u_\ast(F, (M, d); C)_b = \{ f \in \E^u_\ast(F, (M, d); C) \mid d(f, i_F) < \infty\}$. 

By deformations of uniform embeddings we mean the following type of homotopies in spaces of uniform embeddings: 

\begin{defn}\label{def_adm_def}
Suppose ${\cal W}$ is a neighborhood of $i_F$ in $\E^u_\ast(F, (M, d); C)$ and $H \subset F$, $D \subset C$. 
An admissible deformation of ${\cal W}$ over $H$ in $\E^u_\ast(F, (M, d); D)$ means a homotopy 
$$\phi : {\cal W} \times [0,1] \lra \E^u_\ast(F, (M, d); D)$$   
which satisfies the following conditions:  
\begin{enumerate} 
\item For each $f \in {\cal W}$ \ \ (i) $\phi_0(f) = f$, \ \ (ii) $\phi_1(f) = \id$ \ on $H$,  
\begin{itemize}
\item[] (iii) if $f = \id$ on $F \cap \partial M$, then $\phi_t(f) = \id$ on $F \cap \partial M$ \ $(t \in [0,1])$.  
\end{itemize} 
\item $\phi_t(i_F) = i_F$ \ $(t \in [0,1])$. 
\end{enumerate} 
\end{defn}

The adjectives ``local'' and ``global'' to deformations represent  
the size of the neighborhood ${\cal W}$ in the ambient space $\E^u_\ast(F, (M, d); C)$ 
(in other words, the size of $\alpha$ with $\E^u_\ast(i_F, \alpha; F, (M, d); C) \subset {\cal W}$). 

\subsection{$\kappa$-cones} \mbox{} 

First we recall the basics on $\kappa$-cones over metric spaces (cf. \cite[Chapter I.5]{BH}). 
The cone $C(Y)$ over a set $Y$ is defined by   
$C(Y) = \big(Y \times [0, \infty)\big)/\big(Y \times \{ 0 \}\big)$, that is, 
it is obtained from the set $Y \times [0, \infty)$ by collapsing the subset $Y \times \{ 0 \}$ to one point. 
In other words, $C(Y)$ is the quotient set of the set $Y \times [0, \infty)$ under the equivalence relation $\sim$ defined by \\[1mm] 
\hspace*{20mm} $(x, s) \sim (y,t)$ if and only if (a) $s = t = 0$ or (b) $s = t > 0$ and $x = y$. \\[1mm]
The equivalence class of the point $(x,s)$ is denoted by $sx$. 
The distinguished point $\bs{0} := 0x$ is called the cone point of $C(Y)$. 
For $sx \in C(Y)$ and $t \in [0, \infty)$ the scalor multiplication is defined by $t(sx) = (ts)x$. 
Let \\[1mm]
\hspace*{20mm} $C(Y)_+ := C(Y) - \{ \bs{0} \}$ \ \ and \ \ $C(Y)_r := \{ ty \in C(Y) \mid t \geq r \}$ $(r \geq 0)$. \\
Any function $f : Y \to Y'$ induces the associated function \\
\hspace*{40mm} $C(f) : C(Y) \to C(Y')$ : $C(f)(ty) = tf(y)$.  

Suppose $(Y, d)$ is a metric space. The metric $d_\pi$ on $Y$ is define by 
$$d_\pi(x, y) := \min \{ d(x, y), \pi \} \ \ (x, y \in Y).$$

\begin{defn} For $\kappa \leq 0$, 
the $\kappa$-cone $C_\kappa(Y, d)$ over $(Y,d)$ is the metric space $(C(Y), \tilde{d}_\kappa)$, 
where the metric $\tilde{d}_\kappa$ on $C(Y)$ is defined by the following formula: 
\begin{itemize}
\item[(i)\,] for $\kappa = 0$ : \hspace{5mm} $\tilde{d}_\kappa(sx , ty)^2 = s^2 + t^2 - 2st \cos d_\pi(x,y)$,  
\vskip 1mm 
\item[(ii)] for $\kappa < 0$ : \hspace{5mm} 
$\begin{array}[t]{@{}l}
\cosh (\sqrt{-\kappa} \, \tilde{d}_\kappa (sx, ty)) \\[2mm] 
\hspace*{10mm} = 
\cosh (\sqrt{-\kappa} \,  s) \cosh (\sqrt{-\kappa} \,  t) - \sinh (\sqrt{-\kappa} \,  s) \sinh (\sqrt{-\kappa} \,  t)  \cos d_\pi(x, y). \end{array}$ 
\end{itemize}
\vskip 1mm 
The $\kappa$-cone end over $(Y,d)$ means  the subspaces $C_\kappa(Y,d)_r = (C(Y)_r, \tilde{d}_\kappa)$ $(r> 0)$ of 
the $\kappa$-cone $C_\kappa(Y,d)$. Let $C_\kappa(Y,d)_+ := (C(Y)_+, \tilde{d}_\kappa)$. 
\end{defn} 

\begin{example}\label{exp_cone_sph}(cf.~\cite[Proposition 5.8]{BH}) 
Consider the standard unit $(n-1)$-sphere ${\Bbb S}^{n-1}$ in $\IR^n$ with the standard spherical metric $d_1$. 
Then, the $\kappa$-cone $C_\kappa({\Bbb S}^{n-1})$ is isometric to $M_\kappa^n$ for $\kappa \leq 0$. 
An isometry $\phi : C_\kappa({\Bbb S}^{n-1}) \approx M_\kappa^n$ is obtained by 
$$\mbox{$\phi(ty) = \exp_o(ty)$ \hspace{5mm} $(ty \in C_\kappa({\Bbb S}^{n-1}))$,}$$ 
where $o$ is any fixed point of $M_\kappa^n$, $\exp_o : T_o M_\kappa^n \to M_\kappa^n$ is the exponential map and 
${\Bbb S}^{n-1}$ is identified to the unit sphere $S(T_o M_\kappa^n)$ in the tangent space $T_o M_\kappa^n$.
Consider the closed half space $\IR^n_{\geq 0} = \{ x \in \IR^n \mid x_n \geq 0 \}$ 
and the closed upper half $(n-1)$-sphere ${\Bbb S}^{n-1}_{\geq 0} = \{ x \in {\Bbb S}^{n-1} \mid x_n \geq 0 \}$. For $\kappa = 0$, with taking $o$ the origin of $\IR^n$, we have an isometry of pairs, 
$\phi : (C_0({\Bbb S}^{n-1}), C_0({\Bbb S}^{n-1}_{\geq 0})) \approx (\IR^n, \IR^n_{\geq 0})$. 
\end{example} 

\begin{remark}\label{rem_kappa-cone} \mbox{} 
\vspace*{-4mm} 
\begin{enumerate}
\item Consider the functions \ $\lambda_\kappa : \IR \to \IR$ \ $(\kappa \leq 0)$ \ defined by \ \  
$\ds \lambda_\kappa(u) = 
\left\{ \begin{array}[c]{ll}
\frac{u}{\,2\,} & (\kappa = 0) \\[3mm] 
\sinh \big(\frac{\sqrt{-\kappa}}{2}u\big) & (\kappa < 0). 
\end{array} \right.$ 
\vskip 2mm 
\begin{itemize}
\item[(i)\,] $\lambda_\kappa$ is a diffeomorphism which is a monotonically increasing odd function (so $\lambda_\kappa(0) = 0$). 
\item[(ii)] $K \lambda_\kappa(u) \leq \lambda_\kappa(Ku)$ for $K \geq 1$ and $u \geq 0$. 
\end{itemize}
\vskip 2mm 
We use the abbreviation: $\lambda_\kappa^2(u) = (\lambda_\kappa(u))^2$. 
\vskip 2mm 
\item 
Using the function $\lambda_\kappa$, we have the following equalities for the metric $\tilde{d}_\kappa$ $(\kappa \leq 0)$: 
\vskip 2mm 
\begin{itemize}
\item[(i)\,] $\lambda_\kappa^2  (\tilde{d}_\kappa (sx, ty)) 
= \lambda_\kappa^2 (s-t) + \lambda_\kappa(2s) \lambda_\kappa(2t) \sin^2 \frac{1}{\,2\,} d_\pi(x, y)$ \hspace{5mm}
$(sx, ty \in C(Y)).$
\vskip 2mm 

\item[(ii)] (a) $\tilde{d}_\kappa(sy, ty) = |s-t| \leq \tilde{d}_\kappa(sx, ty)$ \hspace{1mm} and \hspace{1mm} 
(b) $\lambda_\kappa (\tilde{d}_\kappa (tx, ty)) = \lambda_\kappa(2t) \sin \frac{1}{\,2\,} d_\pi(x, y)$.  
\end{itemize}
\vskip 2mm 

\item If $u, v \in [0,\pi/2]$, $K \geq 1$ and $v \leq Ku$, then $\sin v \leq K \sin u$. 
\end{enumerate}
\end{remark}

\begin{remark}\label{rem_kappa-cone_proj} (1) 
The projection $q : (Y,\tau_d) \times [0, \infty) \to C_\kappa(Y,d)$ is continuous and 
its restriction $q : Y \times (0, \infty) \to C_\kappa(Y,d)_+$ is a homeomorphism.  
When $(Y, d)$ is compact, the map $q : Y \times [0, \infty) \to C_\kappa(Y,d)$ is a quotient map so that 
the topology on $C(Y)$ induced from the metric $\tilde{d}_\kappa$ coincides with the quotient topology induced from $Y \times [0, \infty)$. 

(2) The scalor multiplication $C_\kappa(Y, d) \times [0,\infty) \ni (u, t) \mapsto tu \in C_\kappa(Y, d)$ is continuous. 
\end{remark}

Suppose $f : (Y,d) \to (Y', d')$ is a map between metric spaces.  
Then, for any $\kappa \leq 0$ the $\kappa$-cone extension 
$C_\kappa(f) \equiv C(f) : C_\kappa(Y, d) \to C_\kappa(Y', d')$ is also continuous. 
If $K \geq 1$ and $f : (Y, d) \to (Y', d')$ is $K$-Lipschitz, then so is the map $f_\pi \equiv f : (Y, d_\pi) \to (Y', d'_\pi)$. 

\begin{lemma}\label{lem_Lip} The following conditions are equivalent. 
\begin{enumerate}
\item $f_\pi$ is Lipschitz. \hspace{10mm}
{\rm (2)} $C_\kappa(f)$ is Lipschitz. \hspace{10mm}
{\rm (3)} $C_\kappa(f)$ is uniformly continuous. 
\end{enumerate}
\end{lemma}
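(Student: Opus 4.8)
The implication $(2) \Rightarrow (3)$ is immediate from the general fact, recorded in Section 2.2(c), that any Lipschitz map is uniformly continuous. So no work is needed there beyond invoking that remark.

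For $(3) \Rightarrow (1)$, the key observation is that the metric $d_\pi$ on $Y$ can be recovered from the cone metric $\tilde{d}_\kappa$ by restricting to a single radial slice. Using Remark~\ref{rem_kappa-cone}(2)(ii)(b) with $s = t = 1$, I would write $\lambda_\kappa(\tilde{d}_\kappa(1x,1y)) = \lambda_\kappa(2)\sin\tfrac{1}{2}d_\pi(x,y)$, which exhibits $d_\pi(x,y)$ as a continuous (indeed bi-Lipschitz on bounded scales) function of $\tilde{d}_\kappa(1x,1y)$ via the inverse of the increasing diffeomorphism $\lambda_\kappa$ and the arcsine. Since $C_\kappa(f)$ maps $1x \mapsto 1f(x)$, uniform continuity of $C_\kappa(f)$ forces: given $\e > 0$, there is $\delta > 0$ so that $\tilde{d}_\kappa(1x,1y) < \delta$ implies $\tilde{d}_\kappa(1f(x),1f(y)) < \e$. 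Translating both sides back through the displayed formula yields a modulus of uniform continuity for $f_\pi$. The mild technical point is that $d_\pi \in [0,\pi]$ is bounded, so $\tfrac{1}{2}d_\pi \in [0,\pi/2]$ and $\sin$ is strictly increasing there, making the inversion legitimate; moreover a uniformly continuous map on a metric space with bounded metric is automatically Lipschitz is \emph{false} in general, so I should instead argue $(1)$ as uniform continuity of $f_\pi$ suffices only if the stated Lemma already means Lipschitz — here I must actually produce a global Lipschitz constant, which I obtain because $\lambda_\kappa$ and $\arcsin$ are bi-Lipschitz on the relevant compact intervals $[0,\pi]$ and $[0,\lambda_\kappa(\pi)]$.

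The substantive implication is $(1) \Rightarrow (2)$, and this is where I expect the main obstacle. Assuming $f_\pi$ is $K$-Lipschitz (and WLOG $K \geq 1$), I want a Lipschitz constant for $C_\kappa(f)$ directly from the formula in Remark~\ref{rem_kappa-cone}(2)(i):
\begin{equation*}
\lambda_\kappa^2(\tilde{d}_\kappa(sx,ty)) = \lambda_\kappa^2(s-t) + \lambda_\kappa(2s)\lambda_\kappa(2t)\sin^2\tfrac{1}{2}d_\pi(x,y).
\end{equation*}
Applying this to the images $sf(x), tf(y)$ and noting that the first summand $\lambda_\kappa^2(s-t)$ is unchanged while $\sin^2\tfrac{1}{2}d_\pi(f(x),f(y))$ can only grow, I would bound $\sin\tfrac{1}{2}d_\pi(f(x),f(y)) \leq K\sin\tfrac{1}{2}d_\pi(x,y)$. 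This last step is exactly Remark~\ref{rem_kappa-cone}(3), applied with $u = \tfrac{1}{2}d_\pi(x,y)$ and $v = \tfrac{1}{2}d_\pi(f(x),f(y))$, both lying in $[0,\pi/2]$, where $v \leq Ku$ follows from the $K$-Lipschitz property of $f_\pi$. Combining these gives
\begin{equation*}
\lambda_\kappa^2(\tilde{d}_\kappa(sf(x),tf(y))) \leq \lambda_\kappa^2(s-t) + K^2\lambda_\kappa(2s)\lambda_\kappa(2t)\sin^2\tfrac{1}{2}d_\pi(x,y) \leq K^2\,\lambda_\kappa^2(\tilde{d}_\kappa(sx,ty)),
\end{equation*}
using $K \geq 1$ to absorb the first term. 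Taking square roots and then applying the monotone inverse of $\lambda_\kappa$ together with Remark~\ref{rem_kappa-cone}(1)(ii), namely $K\lambda_\kappa(u) \leq \lambda_\kappa(Ku)$, lets me conclude $\tilde{d}_\kappa(sf(x),tf(y)) \leq K\,\tilde{d}_\kappa(sx,ty)$, so $C_\kappa(f)$ is $K$-Lipschitz. The delicate part is chaining the inequalities so that the constant $K$ survives cleanly through the $\lambda_\kappa$/inverse-$\lambda_\kappa$ conversions rather than degrading; Remark~\ref{rem_kappa-cone}(1)(ii) and (3) are precisely the two tailored estimates that make this bookkeeping go through.
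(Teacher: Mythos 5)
Your implication $(1)\Rightarrow(2)$ is correct and in fact takes a slightly different route from the paper: you apply the squared identity of Remark~\ref{rem_kappa-cone}(2)(i) to $sf(x), tf(y)$ directly, bound the sine term by Remark~\ref{rem_kappa-cone}(3), and extract the constant through $K\lambda_\kappa(u)\leq\lambda_\kappa(Ku)$, obtaining a $K$-Lipschitz bound in one step; the paper instead splits $\tilde{d'}_\kappa(sf(x),tf(y))$ by the triangle inequality through the intermediate points $sf(y)$ and $sf(x)$ and only gets the constant $K+1$. That part of your argument is sound, and $(2)\Rightarrow(3)$ is of course immediate.

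The genuine gap is in $(3)\Rightarrow(1)$. Restricting $C_\kappa(f)$ to the single slice $t=1$ and passing through the identifications coming from $\lambda_\kappa(\tilde{d}_\kappa(1x,1y))=\lambda_\kappa(2)\sin\frac{1}{2}d_\pi(x,y)$ only converts a modulus of uniform continuity into another modulus of uniform continuity; pre- and post-composing a uniformly continuous map with bi-Lipschitz changes of metric can never manufacture a Lipschitz constant. You flag this yourself and then dismiss it, but the bi-Lipschitzness of $\lambda_\kappa$ and $\arcsin$ on compact intervals does not close the issue: for $Y=Y'=[0,1]$ and $f(x)=\sqrt{x}$, the map $f_\pi$ is uniformly continuous but not Lipschitz, and the restriction of $C_\kappa(f)$ to the slice $t=1$ \emph{is} uniformly continuous, so your argument as written cannot distinguish this case and hence cannot prove the implication. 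What you are missing is that the hypothesis must be exploited at all radii $t$, not just $t=1$: by Remark~\ref{rem_kappa-cone}(2)(ii)(b) the slice $\{ty : y\in Y\}$ carries a copy of $(Y,\sin\frac{1}{2}d_\pi)$ magnified by $\lambda_\kappa(2t)\to\infty$, so one fixed modulus $(\e=1,\delta)$ of uniform continuity of $C_\kappa(f)$, applied at larger and larger $t$, controls $\sin\frac{1}{2}d'_\pi(f(x),f(y))$ against $\sin\frac{1}{2}d_\pi(x,y)$ at arbitrarily small scales and thereby forces a global Lipschitz bound. This is exactly how the paper argues (by contradiction, choosing $t$ so that $\lambda_\kappa(2t)\sin\frac{1}{2}d_\pi(x,y)=\lambda_\kappa(\delta)$), and your proof of this implication needs to be rebuilt along those lines.
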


\begin{proof} 
(1) $\Rightarrow$ (2): The map $f_\pi$ is $K$-Lipschitz for some $K \geq 1$, that is, 
$$\mbox{$d'_\pi(f(x), f(y)) \leq K d_\pi(x, y)$ \ $(x, y \in Y)$.}$$

For any $sx, ty \in C(Y)$ with $s \leq t$, from Remark~\ref{rem_kappa-cone} it follows that 
\begin{enumerate}
\item[(i)\ ] $\tilde{d'}_\kappa(s f(y), t f(y)) = |s-t| \leq \tilde{d}_\kappa(sx, ty)$, \hspace{4mm} 
(ii) \ $\tilde{d}_\kappa(sx, sy) \leq \tilde{d}_\kappa(sx, ty)$ \ \ and 
\item[(iii)] $\tilde{d'}_\kappa(s f(x), s f(y)) \leq K \tilde{d}_\kappa(sx, sy)$, since 
$$\hspace{10mm} 
\begin{array}[t]{@{}l@{\ }l@{\ }l}
\ds \lambda_\kappa (\tilde{d'}_\kappa (sf(x), sf(y))) 
&=& \lambda_\kappa(2s) \sin \frac{1}{\,2\,} d'_\pi(f(x), f(y))\\[3mm] 
&\leq & \lambda_\kappa(2s) K \sin \frac{1}{\,2\,} d_\pi(x, y)
= K \lambda_\kappa (\tilde{d}_\kappa (sx, sy)) 
\leq \lambda_\kappa (K \tilde{d}_\kappa (sx, sy)).
\end{array}$$ 
\end{enumerate}
Therefore, we have 
\begin{eqnarray*}
\tilde{d'}_\kappa(s f(x), t f(y))  
&\leq & \tilde{d'}_\kappa(s f(x), s f(y)) + \tilde{d'}_\kappa(s f(y), t f(y)) \\
&\leq & K \tilde{d}_\kappa(sx, sy) + \tilde{d}_\kappa(sx, ty) \ \leq \ (K+1) \tilde{d}_\kappa(sx, ty).
\end{eqnarray*}

(3) $\Rightarrow$ (1): 
Conversely, suppose $f : (Y, d_\pi) \to (Y', d'_\pi)$ is not Lipschitz. 
Since $C_\kappa(f)$ is uniformly continuous, for $\e = 1$ there exists $\delta > 0$ such that 
$$\mbox{
$\tilde{d'}_\kappa(sf(x), tf(y)) < 1$ \ for any $sx, ty \in C_\kappa(Y, d)$ \ with \ $\tilde{d}_\kappa(sx, ty) \leq \delta$.
}$$  
Let $K := \frac{\lambda_\kappa(1)}{\lambda_\kappa(\delta)} > 0$. 
Since ${\ds \lim_{\mbox{\tiny $a \to 0$}}} \frac{\sin 2Ka}{\sin a} = 2K$, \vspace{1mm} 
there exists $a_0 \in (0, 1]$ such that $\frac{\sin 2Ka}{\sin a} > K$ for any $a \in (0, a_0]$. 

Take $L > 2K$ such that $\frac{\pi}{L} < a_0$. 
Since $f : (Y, d_\pi) \to (Y', d'_\pi)$ is not $L$-Lipschitz, 
there exist $x, y \in Y$ such that $d'_\pi(f(x), f(y)) > L d_\pi(x, y)$.  
Since $d'_\pi(f(x), f(y)) \leq \pi$, for $a := \frac{1}{\,2\,} d_\pi(x, y)$, it follows that 
$0 < a < d_\pi(x, y) \leq \frac{\pi}{L} \leq a_0$ \ and 
$$\sin \mbox{$\frac{1}{\,2\,}$} d'_\pi(f(x), f(y))
\geq \sin La 
> \sin 2K a 
> K \sin a= K \sin \mbox{$\frac{1}{\,2\,}$} d_\pi(x, y).$$

Take $t > 0$ such that $\lambda_\kappa(2t)  \sin \frac{1}{\,2\,} d_\pi(x, y) = \lambda_\kappa(\delta)$. 
Then $\tilde{d}_\kappa (tx, ty) = \delta$, but $\tilde{d'}_\kappa(tf(x), tf(y)) > 1$ since 
$$\begin{array}[t]{l}
\lambda_\kappa (\tilde{d}_\kappa (tx, ty)) 
= \lambda_\kappa(2t) \sin \mbox{$\frac{1}{\,2\,}$} d_\pi(x, y) = \lambda_\kappa(\delta) \hspace{5mm} \text{and} \\[3.5mm]  
\begin{array}[t]{@{}l@{\ }l@{\ }l}
\lambda_\kappa (\tilde{d'}_\kappa (tf(x), tf(y))) 
&=& \lambda_\kappa(2t) \sin \mbox{$\frac{1}{\,2\,}$} d'_\pi(f(x), f(y)) \\[2mm] 
&>&  K \lambda_\kappa(2t) \sin \mbox{$\frac{1}{\,2\,}$} d_\pi(x, y) 
= K \lambda_\kappa(\delta) = \lambda_\kappa(1). 
\end{array}
\end{array}$$
This contradiction completes the proof.  
\end{proof}


\section{Local deformation property for uniform embeddings} 

In this section we formulate a notion of local deformation property for uniform embeddings (LD) and study its basic behaviour. 
Throughout this section $(M, d)$ and $(M', d')$ denote metric manifolds. 

\subsection{Definition and basic properties} \mbox{}

\begin{defn}\label{def_adm_def_2} \mbox{} 
\begin{enumerate} 
\item An admissible tuple in $(M, d)$ is a tuple $(X, W', W, Z, Y)$ such that 
$X \subset_u W' \subset W \subset M$ and $Z \subset_u Y \subset M$. 
Let ${\cal S}(M, d)$ denote the collection of all admissible tuples in $(M, d)$. 
\item An admissible deformation for $(X, W', W, Z, Y) \in {\cal S}(M, d)$ is 
an admissible deformation 
$$\phi : \W \times [0,1] \lra \E^u_\ast(W, M, d; Z)$$ 
of a neighborhood $\W$ of $i_W$ in $\E^u_\ast(W, M, d; Y)$ over $X$ \ (in the sense of Definition~\ref{def_adm_def}) \ 
such that \ for each $(f,t) \in \W \times [0,1]$ 
\vskip 1mm 
\begin{itemize} 
\item[(i)] $\phi_t(f) = f$ \ on \ $W - W'$ \ \ and \ \ (ii) $\phi_t(f)(W) = f(W)$. 
\end{itemize} 
\end{enumerate} 
\end{defn}
 
\begin{defn}\label{notation_LD} (LD) 
\begin{itemize} 
\item[(1)] For ${\cal X}\in {\cal S}(M, d)$ \hspace{2mm} 
(LD)$_{(M,d)}({\cal X})$ \LLRA There exists an admissible deformation for ${\cal X}$.
\item[(2)] $(M,d)$ : $\LD$ \LLRA (LD)$_{(M,d)}({\cal X})$ for any ${\cal X} \in {\cal S}(M, d)$
\item[(3)] For $A \subset M$  
\hspace*{3mm} $A$ : (LD)$_{(M,d)}$ (or $A$ : (LD) in $(M, d)$) \\
\hspace*{10mm} \LLRA  (LD)$_{(M,d)}({\cal X})$ for any ${\cal X} =  (X, W', W, Z, Y) \in {\cal S}(M, d)$ with $X \subset A$ 
\end{itemize}
\end{defn}

We also use the following additional notations. 
\begin{itemize} 
\item[(4)] For $X \subset M$ \\
\hspace*{1mm} $\LD_{(M,d)}(X)$ \LLRA $\LD_{(M,d)}({\cal X})$ for any ${\cal X} = (X, W', W, Z, Y) \in {\cal S}(M, d)$
\end{itemize} 
\vskip 2mm 

Note that  
(i) $(M,d)$ : $\LD$ \LLRA $M$ : $\LD_{(M,d)}$ and 
(ii) $A$ : (LD)$_{(M,d)}$ \LLRA$\LD_{(M,d)}(X)$ for any $X \subset A$.  

\begin{example}\label{exp_EK_covering} \mbox{} 
\begin{itemize}
\item[(1)] The local deformation theorem by  A.V.~{\v C}ernavski\u\i\ (\cite{Ce}) and R.\,D.~Edwards - R.\,C.~Kirby (\cite[Theorem 5.1]{EK}) is restated as follows: 
Any relatively compact subset $K$ of $M$ has the property $\LD_{(M,d)}$. 

\item[(2)] A metric manifold $(M, d)$ has the property (LD) if it admits a metric covering projection
$\pi : (M, d) \to (N, \rho)$ onto a compact metric manifold $(N, \rho)$ (\cite[Theorem 1.1]{Ya}). 
\end{itemize}
\end{example} 

The condition $\LD$ has the following basic properties. 

\begin{proposition}\label{prop_LD} \mbox{} 
\begin{itemize}
\item[(1)] Suppose $h : (M,d) \approx (M', d')$ is a uniform homeomorphism. Then {\rm (i)} for any $A \subset M$ \\ 
\hspace*{5mm} $A : \LD_{(M,d)}$ \LLRA $h(A) : \LD_{(M', d')}$ \ and \ {\rm (ii)} $(M,d) : \LD$ \LLRA $(M', d') : \LD$

\item[(2)] 
\begin{itemize}
\item[(i)\,] Suppose $A \subset B \subset M$. Then \ $B: \LD_{(M,d)}$ \LRA $A : \LD_{(M,d)}$. 

\item[(ii)] Suppose $A \subset_u N \subset M$ and $N$ is an $n$-manifold.  
Then, \\
\hspace*{10mm} $A : \LD_{(N, d|_N)}$ \LLRA $A : \LD_{(M,d)}$. 
\end{itemize}
\vskip 1mm 

\item[(3)]
\begin{itemize}
\item[(i)]  Suppose $A \subset_u U \subset  M$ and $B \subset M$. Then \ $U$, $B : \LD_{(M,d)}$ \LRA $A \cup B : \LD_{(M,d)}$. 
\item[(ii)] If $A_1, \cdots, A_m \subset M$ and 
each $A_i$ admits a uniform neighborhood which satisfies $\LD_{(M,d)}$, then $\bigcup_{i=1}^m A_i$ satisfies $\LD_{(M,d)}$. 
\end{itemize}
\vskip 1mm 

\item[(4)] Suppose $K$ is a relatively compact subset of $M$. Then, for any $A \subset M$ \\
\hspace*{10mm} $A : \LD_{(M,d)}$ \LLRA $A \cup K : \LD_{(M,d)}$. 
\vskip 1mm 

\item[(5)]
If $(M,d) : \LD$ \ $($or more generally $\LD_{(M,d)}(M, M, M, \emptyset, \emptyset)$ holds$)$, then ${\cal H}^u(M, d)$ and ${\cal H}^u(M, \partial M)$ are locally contractible. 
\end{itemize}
\end{proposition}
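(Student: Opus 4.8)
The plan is to establish the five assertions in the order given, since each later part rests on the earlier ones together with the two external inputs recorded in Example~\ref{exp_EK_covering}.

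For (1) the engine is conjugation by $h$. I would first note that a uniform homeomorphism preserves uniform neighbourhoods: if $A\subset_u B$ in $M$ then $h(A)\subset_u h(B)$ in $M'$ (feed $\e$ into the modulus of uniform continuity of $h^{-1}$), so $(X,W',W,Z,Y)\mapsto(h(X),h(W'),h(W),h(Z),h(Y))$ is a bijection $\mathcal{S}(M,d)\to\mathcal{S}(M',d')$. Next, writing $W_1=h(W)$, the rule $f\mapsto h\,f\,(h|_W)^{-1}$ is a homeomorphism $\E^u_\ast(W,M,d;C)\to\E^u_\ast(W_1,M',d';h(C))$ (continuity of pre- and post-composition by the fixed uniform maps $h,h^{-1}$ is the displayed composition fact preceding Lemma~\ref{lemma_conti}), it sends $i_W$ to $i_{W_1}$, and it respects properness because $h(\partial M)=\partial M'$. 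Conjugating a given admissible deformation by this homeomorphism transports every clause of Definitions~\ref{def_adm_def} and~\ref{def_adm_def_2} verbatim, giving $\LD_{(M,d)}(\mathcal{X})\Leftrightarrow\LD_{(M',d')}(h(\mathcal{X}))$; then (1)(i),(ii) follow by ranging $\mathcal{X}$ over all tuples with first coordinate inside $A$, respectively $M$. Part (2)(i) is purely formal: a tuple with $X\subset A\subset B$ is among those tested by $B:\LD_{(M,d)}$, so the deformation it supplies also witnesses $A:\LD_{(M,d)}$.

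For (2)(ii) I would reduce, via the remark following Definition~\ref{notation_LD}, to comparing $\LD_{(N,d|_N)}(X)$ and $\LD_{(M,d)}(X)$ for a fixed $X\subset A$, and fix $\e_0>0$ with $O_{\e_0}(A)\subset N$. The essential point is that every admissible deformation is supported in $W'$, a uniform neighbourhood of $X$, and preserves images ($\phi_t(f)(W)=f(W)$), so all deformation activity can be confined to the uniform interior of $N$, where the metric of $N$ and $M$ agree and (near $A$) their boundary structures coincide, making embeddings into $N$ and into $M$ interchangeable there. The implication $\LD_{(M,d)}(X)\Rightarrow\LD_{(N,d|_N)}(X)$ is then nearly immediate, since an $M$-deformation of an embedding $f:W\to N$ with $W\subset N$ keeps $\phi_t(f)(W)=f(W)\subset N$; for the converse I would shrink the support to $W'\cap O_{\e_0/2}(X)\subset{\rm Int}_M N$, apply the $N$-deformation on a neighbourhood of $X$ lying there, and extend by $f$ off the support using clause (i) of Definition~\ref{def_adm_def_2}. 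I would isolate in a lemma the one genuine cost: reconciling the ambient-dependent relations $\subset_u$ and the properness requirement across the frontier ${\rm Fr}_M N$, by replacing the given $Z,Y$ with sets uniformly nested in $M$ that nonetheless meet $W$ in the prescribed sets.

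Part (3)(i) is the crux and the step I expect to be the main obstacle. Given a tuple $(X,W',W,Z,Y)$ with $X\subset A\cup B$, I would build the deformation as a concatenation of two image-preserving deformations supported in $W'$. Exploiting $A\subset_u U$, the first stage applies $U:\LD_{(M,d)}$ with first coordinate a uniform neighbourhood $O_\rho(X\cap A)$ of $X\cap A$ (legitimate for small $\rho$ because $X\subset_u W'$ gives $O_\rho(X\cap A)\subset_u W'$, while $A\subset_u U$ gives $O_\rho(X\cap A)\subset U$) and fourth coordinate a uniform collar $O_{\e/2}(Z)$ of $Z$; this makes $f$ equal to $\id$ on the \emph{uniform} neighbourhood $O_\rho(X\cap A)$ while keeping it $\id$ on a collar of $Z$. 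The second stage applies $B:\LD_{(M,d)}$ with first coordinate $X\cap B\subset B$ and fourth coordinate $Z\cup O_{\rho/2}(X\cap A)$, so the achievement on $X\cap A$ is frozen while $X\cap B$ is straightened. The whole difficulty is the bookkeeping of fix-sets: the slack $Z\subset_u Y$ of the original tuple supplies the collar that lets the second stage's input requirement be met by the first stage's output, and $A\subset_u U$ is exactly what lets the first stage act on a uniform neighbourhood of $X\cap A$ rather than on $X\cap A$ alone. I would then check that the concatenation is an admissible deformation for the original tuple (images, support in $W'$, the $\partial M$-clause and $\phi_t(i_W)=i_W$ pass through each stage), shrinking the initial neighbourhood so the first stage maps it into the domain of the second (by continuity of the first stage at $i_W$, where it fixes $i_W$). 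Part (3)(ii) then follows by induction on $m$: the base case is (2)(i), and the inductive step applies (3)(i) with $A=A_i$ inside its uniform neighbourhood $U_i$ carrying $\LD$ and $B=A_1\cup\cdots\cup A_{i-1}$.

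For (4), the implication $\Leftarrow$ is (2)(i). For $\Rightarrow$, relative compactness of $K$ yields a relatively compact open $U$ with $K\subset_u U$ (local compactness of the manifold, together with positivity of $d({\rm cl}_M K,\,M-U)$ since ${\rm cl}_M K$ is compact), and Example~\ref{exp_EK_covering}(1) gives $U:\LD_{(M,d)}$; then (3)(i) with this $U$ and $B=A$ delivers $A\cup K:\LD_{(M,d)}$. Finally (5): the hypothesis furnishes an admissible deformation $\phi$ for the tuple $(M,M,M,\emptyset,\emptyset)$, i.e. a homotopy on a neighbourhood $\W$ of $i_M$ in $\E^u_\ast(M,M,d;\emptyset)$ with $\phi_0(f)=f$, $\phi_1(f)=\id_M$, $\phi_t(i_M)=i_M$, $\phi_t(f)(M)=f(M)$, and the $\partial M$-clause. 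Restricting to $\W\cap\mathcal{H}^u(M,d)$, the image condition $\phi_t(h)(M)=h(M)=M$ keeps $\phi_t(h)$ a surjective uniform embedding, hence an element of $\mathcal{H}^u(M,d)$, so $\phi$ contracts a neighbourhood of $\id_M$ to $\id_M$ inside $\mathcal{H}^u(M,d)$; the $\partial M$-clause does the same inside $\mathcal{H}^u(M,\partial M)$. Local contractibility at every point then follows from local contractibility at $\id_M$, because $\mathcal{H}^u(M,d)$ is a topological group on which left translations are homeomorphisms.
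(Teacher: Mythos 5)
Your proposal is correct and follows essentially the same route as the paper: conjugation by $h_{\#}(f)=hfh^{-1}$ for (1), a shrinking/restriction lemma identifying $\LD_{(N,d|_N)}(X)$ with $\LD_{(M,d)}(X)$ for (2)(ii), the two-stage concatenation (first straighten a uniform neighborhood of $X\cap A$ inside $U$, then $X\cap B$ while freezing what was gained) for (3), Edwards--Kirby plus (3)(i) for (4), and restriction of the deformation for $(M,M,M,\emptyset,\emptyset)$ to $\W\cap{\cal H}^u(M,d)$ via the image-preservation clause for (5). The bookkeeping of the auxiliary tuples in your step (3)(i) matches the paper's choice of $X_1=O_\e(X\cap A)$, $Z_1=O_\e(Z)$, $Z_2=Z\cup(X\cap A)$, $Y_2=Z_1\cup X_1$ up to harmless variations.
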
 

For $n$-submanifolds of $M$ we have the following conclusions. 

\begin{corollary}\label{cor_LD} \mbox{} 
\begin{enumerate}
\item Suppose $M = A \cup B$, $A$, $B$ are $n$-manifolds and $A - B \subset_u A$. Then, \\ 
\hspace*{10mm} $(A, d|_A)$, $(B, d|_B)$ $:$ $\LD$ \LRA $(M,d)$ $:$ $\LD$. 

\item Suppose $L \subset M$, $L$ is a $n$-manifold and $M - L$ is relatively compact in $M$. Then
\begin{itemize}
\item[(i)\,] $(L,d|_L) : \LD$ \LRA $(M,d) : \LD$, 
\item[(ii)] if $L$ is closed in $M$, then $(M,d) : \LD$ \LRA $(L,d|_L) : \LD$
\end{itemize}

\item Suppose $M = K \cup \bigcup_{i=1}^m L_i$, $K$ is compact, each $L_i$ is an $n$-manifold and $d(L_i, L_j) > 0$ for any $i \neq j$. 
Then, 
\begin{itemize}
\item[(i)\,] $(L_i, d|_{L_i}) : \LD$ $(i=1, \cdots, m)$ \LRA $(M,d) : \LD$, 
\item[(ii)] if each $L_i$ is closed in $M$, then $(M,d) : \LD$ \LRA $(L_i, d|_{L_i}) : \LD$ $(i=1, \cdots, m)$. 
\end{itemize}
\end{enumerate}
\end{corollary}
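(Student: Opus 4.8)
The plan is to deduce Corollary~\ref{cor_LD} entirely from the machinery of Proposition~\ref{prop_LD}, since all three parts are assertions about how \LD\ behaves under decompositions of $M$ into $n$-submanifolds. The guiding principle is the equivalence $(M,d):\LD \Longleftrightarrow M:\LD_{(M,d)}$ together with the localization statement Proposition~\ref{prop_LD}(2)(ii): for $A \subset_u N \subset M$ with $N$ an $n$-manifold, membership $A:\LD_{(N,d|_N)}$ is \emph{equivalent} to $A:\LD_{(M,d)}$. This is the bridge that lets me convert intrinsic hypotheses like $(A,d|_A):\LD$ into statements about $A$ inside the ambient $M$, so that the additivity result Proposition~\ref{prop_LD}(3) can be applied.

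First I would prove part (1). For the $(\Longrightarrow)$ direction, I must show $M:\LD_{(M,d)}$, and since $M = A \cup B$ it suffices by Proposition~\ref{prop_LD}(3)(i) to exhibit a uniform neighborhood of (a piece of) $M$ with the property. The hypothesis $A - B \subset_u A$ means $A-B$ has $A$ as a uniform neighborhood, and $(A,d|_A):\LD$ gives $A:\LD_{(A,d|_A)}$, which by Proposition~\ref{prop_LD}(2)(ii) transfers to $A-B:\LD_{(M,d)}$ (using $A-B \subset_u A \subset M$). Symmetrically, $B:\LD_{(B,d|_B)}$ gives $B:\LD_{(M,d)}$, and since $B \subset_u B$ trivially, I can feed $U = A$ covering $A-B$ and the set $B$ into Proposition~\ref{prop_LD}(3)(i) to conclude $(A-B)\cup B = M$ satisfies $\LD_{(M,d)}$. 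The key point to check carefully is that $A - B \subset_u A$ together with $M = A\cup B$ really does let me write $M$ as the relevant union with one piece having a uniform neighborhood of property \LD.

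Parts (2) and (3) then follow the same template, now combined with the compactness hypotheses and the Edwards--Kirby statement Example~\ref{exp_EK_covering}(1), which says every relatively compact subset of $M$ has $\LD_{(M,d)}$. For part (2)(i), since $M - L$ is relatively compact, $K := cl_M(M-L)$ is compact, hence $K:\LD_{(M,d)}$; and $(L,d|_L):\LD$ gives $L:\LD_{(M,d)}$ after verifying that $L$ admits a uniform neighborhood in $M$ on which the property holds, then $M = L \cup K$ and Proposition~\ref{prop_LD}(3)(i) or (4) finishes it. The absorption lemma Proposition~\ref{prop_LD}(4), stating $A:\LD_{(M,d)} \Leftrightarrow A\cup K:\LD_{(M,d)}$ for relatively compact $K$, is what cleanly handles the compact error terms throughout. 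Part (2)(ii) additionally uses monotonicity Proposition~\ref{prop_LD}(2)(i) ($B:\LD \Rightarrow A:\LD$ for $A\subset B$) to pass from $M$ to the closed submanifold $L$, restricting the property. Part (3) is the iterated/disjoint version: the condition $d(L_i,L_j)>0$ guarantees the $L_i$ are uniformly separated, so each has a uniform neighborhood disjoint from the others, and finitely many applications of Proposition~\ref{prop_LD}(3)(ii) assemble $\bigcup L_i$, after which the compact $K$ is absorbed via (4).

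The main obstacle I anticipate is not any single deep step but rather the bookkeeping of uniform neighborhoods: at each transfer between intrinsic property $(A,d|_A):\LD$ and ambient property $A:\LD_{(M,d)}$ I must confirm the hypothesis $A \subset_u N \subset M$ of Proposition~\ref{prop_LD}(2)(ii) genuinely holds, and when invoking the additivity Proposition~\ref{prop_LD}(3) I must produce the required uniform neighborhood $U$ with $A \subset_u U$. In parts (2) and (3) this is where the topological hypotheses (closedness of $L$ or $L_i$, relative compactness of the complement, positive pairwise distance) do their work, and the delicate point is that an $n$-submanifold need not be uniformly embedded unless these separation or closedness conditions are imposed — so I expect the verification that the submanifolds in each decomposition carry uniform neighborhoods of the right type to be the step demanding the most care, even though each individual verification is elementary.
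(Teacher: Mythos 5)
Your overall strategy --- localize via Proposition~\ref{prop_LD}(2)(ii), assemble via the additivity statement (3), and absorb compact pieces via (4) and Example~\ref{exp_EK_covering}(1) --- is exactly the paper's, but the execution of part (1) contains a genuine error that propagates to parts (2) and (3). You write that ``$B \subset_u B$ trivially'' and propose to feed $U = A$ and the set $B$ directly into Proposition~\ref{prop_LD}(3)(i). The relation $\subset_u$ is not reflexive for subsets of a larger ambient space: $B \subset_u B$ in $M$ means $O_\e(B) \subset B$ for some $\e > 0$, equivalently $d(B, M - B) \geq \e$, which fails in general (take $M = \IR$, $A = (-\infty, 1]$, $B = [0,\infty)$; then $A - B \subset_u A$ holds but $O_\e(B) = (-\e, \infty) \not\subset B$ for every $\e>0$). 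Consequently you cannot invoke Proposition~\ref{prop_LD}(2)(ii) with $N = B$ to convert $(B, d|_B) : \LD$ into $B : \LD_{(M,d)}$, and for the same reason $U = A$ is not known to satisfy $\LD_{(M,d)}$. The missing idea is the shrinking/enlarging trick used in the paper: choose $\e > 0$ with $O_{3\e}(A - B) \subset A$, enlarge $A - B$ to $O_\e(A-B)$ (which has the uniform neighborhood $O_{2\e}(A-B) \subset_u A$ in $M$, to which (2)(ii) genuinely applies), and shrink $B$ to $B_1 := B - O_\e(A - B)$, which satisfies $O_\e(B_1) \subset B$ and hence $B_1 \subset_u B$ in $M$. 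Then $M = O_\e(A-B) \cup B_1$ and Proposition~\ref{prop_LD}(3)(i) applies to these modified pieces.

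The same gap recurs in part (2): $L$ is not a uniform neighborhood of itself in $M$ (points of the relatively compact set $M - L$ can accumulate on ${\rm Fr}_M L$), so ``$(L,d|_L):\LD$ gives $L:\LD_{(M,d)}$'' is not available directly; one must pass to $L_1 := L - C$ for a compact neighborhood $C$ of $cl_M(M-L)$ with $O_\e(cl_M(M-L)) \subset C$, so that $L_1 \subset_u L$ in $M$, and then use $M = L_1 \cup C$ together with Proposition~\ref{prop_LD}(4). For the converse (2)(ii), monotonicity alone does not finish the argument: one needs to return from $L_1 : \LD_{(L,d|_L)}$ to $L : \LD_{(L,d|_L)}$ by absorbing $C \cap L$ via Proposition~\ref{prop_LD}(4) applied inside $L$ --- this is precisely where the closedness of $L$ enters, to guarantee that $C \cap L$ is compact. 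In part (3) the separation $d(L_i,L_j) > \e$ does make each $L_i$ a uniform neighborhood of itself, but only inside $L = \bigcup_i L_i$, not inside $M$; the correct order of operations is to assemble $L : \LD$ intrinsically first and then reduce $M = K \cup L$ to part (2). You correctly flagged the uniform-neighborhood bookkeeping as the delicate point, but the argument as written asserts the reflexivity that fails rather than performing the shrink that repairs it.
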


\begin{example}\label{example_End} (Ends of manifolds) 

(1) Suppose $(M, d)$ is a connected metric $n$-manifold and 
$N$ is a compact $n$-submanifold of $M$. We assume that ${\rm Fr}_M N$ is locally flat and transversal to $\partial M$ so that 
${\rm Fr}_M N$ is a proper $(n-1)$-submanifold of $M$ and $L = cl_M (M - N)$ is also an $n$-submanifold of $M$. 
Then, $L$ has only finitely many connected components $K_1, \cdots, K_k$, $L_1, \cdots, L_l$, 
where $K_i$'s are compact and $L_j$'s are non-compact. 
The enlargement $N_1 = N \cup \big( \cup_{i=1}^k K_i \big)$ is also a compact $n$-submanifold of $M$  
(which is connected if $N$ is connected)  and $M = N_1 \cup \big( \cup_{j=1}^l L_j \big)$. 
From Corollary~\ref{cor_LD}\,(3) it follows that, whenever $d(L_i, L_j) > 0$ for any $i \neq j$, 
$$\mbox{$M : \LD$ \LLRA $L_j : \LD$ $(j=1, \cdots, l)$.}$$ 

(2) Suppose $N$ is a compact $n$-manifold with nonempty boundary and 
$C_i$ $(i = 1, \cdots, m)$ is a non-empty collection of connected components of the boundary $\partial N$. 
For each $C_i$ take a collar $E_i = C_i \times [0,1]$ of $C_i = C_i \times \{ 1 \}$ in $N$ such that $E_i \cap E_j = \emptyset$ $(i \neq j)$. 
Consider the non-compact $n$-manifold $M = N - \cup_{i=1} C_i$. 
The ends of $M$ are in 1-1 correspondence with the product ends $L_i := E_i - C_i = C_i \times [0,1)$ $(i = 1, \cdots, m)$. 
Suppose $M$ is equipped with a metric $d$ such that $d(L_i, L_j) > 0$ for any $i \neq j$. Then 
$M : \LD$ if and only if each $L_i : \LD$. 
\end{example}

\begin{example}\label{exp_collar} (Boundary collars) 

Suppose $(M, d)$ is a metric $n$-manifold and $N \subset M$ is an $n$-manifold. 
Then $(N, d|_N)$ : (LD) if $M$ : (LD) and 
there exists a collar $E = \partial N \times [0,3]$ of $\partial N = \partial N \times \{ 0 \}$ in $N$ such that 
\begin{enumerate}
\item[(i)\,] $(E, d|_E)$ : (LD) and 
\item[(ii)] $\partial N \times [0,1] \subset_u \partial N \times [0,2]  \subset_u E$ in $N$ \ and \ 
$N - (\partial N \times [0, 1)) \subset_u N$ in $M$.  
\end{enumerate}
In fact, we can write $N = E_1 \cup F$, 
where $E_r =\partial N \times [0,r]$ $(0 < r \leq 3)$ and $F = N - (\partial N \times [0, 1))$. 
Since $M$ : (LD) and $F \subset_u N \subset M$, it follows that 
$F$ : (LD) in $M$ and in $N$. 
On the other hand, since $E$ : (LD) and $E_2 \subset_u E \subset N$, we have $E_2$ : (LD) in $E$ and in $N$. 
Since $E_1 \subset_u E_2 \subset N$, by Proposition~\ref{prop_LD}(3)(i) $N = E_1 \cup F$ : (LD) in $N$. 
\end{example}


\subsection{Proof of Proposition~\ref{prop_LD}} \mbox{}

\begin{lemma}\label{lem_LD_1} Suppose ${\cal X} = (X, W', W, Z, Y) \in {\cal S}(M, d)$. 
\begin{enumerate} 
\item Suppose $\LD_{(M, d)}({\cal X})$ holds. 
\begin{itemize} 
\item[(i)\ ] If $W' \subset W_1' \subset W \subset M$, then $\LD_{(M, d)}(X, W_1', W, Z, Y)$ holds.  
\item[(ii)\,] If $W' \subset_u W \subset W_1 \subset M$, then $\LD_{(M, d)}(X, W', W_1, Z, Y)$ holds.  
\item[(iii)] If $Z_1 \subset Z$ and $Y \subset Y_1 \subset M$, then $\LD_{(M, d)}(X, W', W, Z_1, Y_1)$ holds.  
\end{itemize} 

\item Suppose $W \subset_u N \subset M$ and $N$ is an $n$-manifold.  
Then 
\begin{itemize}
\item[(i)\,] ${\cal X}|_N := (X, W', W, Z \cap N, Y \cap N) \in {\cal S}(N, d|_N)$ and 
\item[(ii)] $\LD_{(N, d|_N)}({\cal X}|_N)$ \LLRA $\LD_{(M, d)}({\cal X})$. 
\end{itemize} 

\item Suppose $h : (M,d) \approx (M', d')$ is a uniform homeomorphism. 
Then 
\begin{itemize}
\item[(i)\,] $h({\cal X}) := (h(X), h(W'), h(W), h(Z), h(Y)) \in {\cal S}(M',d')$ and 
\item[(ii)] $\LD_M({\cal X})$ \LLRA $\LD_{M'}(h({\cal X}))$. 
\end{itemize} 

\end{enumerate} 
\end{lemma}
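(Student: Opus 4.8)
The plan is to establish each of the three statements by transporting a given admissible deformation for one tuple to the modified tuple, checking in every case that the transported homotopy still satisfies all clauses of Definitions~\ref{def_adm_def} and~\ref{def_adm_def_2}(2). The admissibility assertions (2)(i) and (3)(i) are purely formal and I would dispose of them first: $\mathcal{X}|_N \in \mathcal{S}(N, d|_N)$ follows from the intersection rule of Section 2.1 (``$A \subset_u B$ in $X$ $\Rightarrow$ $A \cap Y \subset_u B \cap Y$ in $Y$'', applied to $Z \subset_u Y$ with $Y := N$, and to $X \subset_u W'$ after intersecting with $N$, using $W' \subset W \subset N$), while $h(\mathcal{X}) \in \mathcal{S}(M',d')$ follows from note (b) of Section 2.2 applied to the uniformly continuous map $h^{-1}$, which yields $h(X) \subset_u h(W')$ and $h(Z) \subset_u h(Y)$. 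Throughout I write $\phi$ for the given deformation on a neighborhood $\W$ of $i_W$, and $d$ for the relevant sup-metrics.

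For Part (1), cases (i) and (iii) require no new construction. In (i), enlarging $W'$ to $W_1'$ only shrinks $W - W_1' \subset W - W'$, so clause (i) of Definition~\ref{def_adm_def_2}(2) is inherited and the very same $\phi$ serves. In (iii), since $Z_1 \subset Z$ and $Y \subset Y_1$ we have $\E^u_\ast(W, M, d; Y_1) \subset \E^u_\ast(W, M, d; Y)$ and $\E^u_\ast(W, M, d; Z) \subset \E^u_\ast(W, M, d; Z_1)$, so restricting $\phi$ to $\W \cap \E^u_\ast(W, M, d; Y_1)$ already lands in $\E^u_\ast(W, M, d; Z_1)$ and all clauses persist. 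Case (ii) is the one genuine construction: for $g$ near $i_{W_1}$ the restriction $g|_W$ lies in $\W$ (restriction is $1$-Lipschitz for the sup-metrics), so I set $\phi^1_t(g) := \phi_t(g|_W)$ on $W$ and $\phi^1_t(g) := g$ on $W_1 - W'$. These agree on $W - W'$ by clause (i), and because $W' \subset_u W$ the gluing notes (a) and (b) of Section 2.2 show $\phi^1_t(g)$ and its inverse are both uniformly continuous; injectivity and the equality $\phi^1_t(g)(W_1) = g(W_1)$ use the image-preservation clause (ii), namely $\phi_t(g|_W)(W) = g(W)$. The remaining clauses transfer directly, and continuity of $(g,t) \mapsto \phi^1_t(g)$ follows from $d(\phi^1_t(g), \phi^1_s(g')) \le \max\{d(\phi_t(g|_W), \phi_s(g'|_W)),\, d(g, g')\}$.

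For Part (2) the key is that $W \subset_u N$ confines near-inclusion embeddings to the interior of $N$: fix $\e > 0$ with $O_\e(W) \subset {\rm Int}_M N$. If $d(f, i_W) < \e$ then $f(W) \subset O_\e(W) \subset {\rm Int}_M N$, so $f$ may be read interchangeably as a proper uniform embedding into $(M, d)$ or into $(N, d|_N)$: the sup-metrics agree on maps with image in $N$; the constraint $f = \id$ on $W \cap Y$ coincides with $f = \id$ on $W \cap (Y \cap N)$ since $W \subset N$; and properness matches because $f(W)$ misses ${\rm Fr}_M N$, while for any point of $W$ or $f(W)$ lying in ${\rm Int}_M N$ the manifold structures of $N$ and $M$ agree locally, so such a point lies in $\partial N$ iff it lies in $\partial M$, giving $W \cap \partial M = W \cap \partial N$. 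This identifies the $\e$-balls about $i_W$ in $\E^u_\ast(W, M, d; \,\cdot\,)$ and in $\E^u_\ast(W, N, d|_N; \,\cdot\, \cap\, N)$. Crucially, clause (ii) forces $\phi_t(f)(W) = f(W) \subset {\rm Int}_M N$ for all $t$, so the identification remains valid along the entire deformation; transporting $\phi$ (resp.\ a deformation for $\mathcal{X}|_N$) across it, after shrinking the neighborhood into the $\e$-ball, proves each implication.

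For Part (3), since $h(\partial M) = \partial M'$, conjugation $\Phi_C(f) := h \circ f \circ (h|_W)^{-1}$ is a homeomorphism $\E^u_\ast(W, M, d; C) \approx \E^u_\ast(h(W), M', d'; h(C))$ for each $C$ (right composition by the fixed bijection $(h|_W)^{-1}$ is a sup-metric isometry and left composition by $h$ is continuous by uniform continuity of $h$, via Section 2.4), carrying $i_W$ to $i_{h(W)}$. Setting $\phi'_t := \Phi_Z \circ \phi_t \circ \Phi_Y^{-1}$ on $\W' := \Phi_Y(\W)$ transports every clause verbatim, using $h(W \cap C) = h(W) \cap h(C)$ and $h(\partial M) = \partial M'$ to convert the identity-set and boundary conditions, while the image clause transfers since $\phi'_t(g)(h(W)) = h(\phi_t(\Phi_Y^{-1}g)(W)) = h(f(W)) = g(h(W))$; applying the same to $h^{-1}$ yields the converse. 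The step I expect to be the main obstacle is the combination of the gluing in (1)(ii) with the properness bookkeeping in (2): in both one must confirm that a map assembled from, or re-read across, two regimes is genuinely a \emph{proper} uniform embedding, and the heart of the matter is extracting this from the uniform-containment hypotheses ($W' \subset_u W$ and $W \subset_u N$) together with the image-preservation clause (ii), rather than from any deep geometry.
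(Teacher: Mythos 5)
Your proposal is correct and follows essentially the same route as the paper: the identical deformation for (1)(i), the glued deformation $\phi_t(f|_W)$ on $W$ / $f$ on $W_1-W'$ for (1)(ii), domain restriction for (1)(iii), identification of small balls about $i_W$ via the inclusion $N\subset M$ for (2), and conjugation $f\mapsto hfh^{-1}$ for (3). The only (harmless) variations are cosmetic — you verify uniform continuity of the glued inverse directly from the gluing note rather than through the paper's factorization $\psi_t(f)^{-1}=(\psi_t(f)^{-1}f)f^{-1}$, and in (2) you use the image-preservation clause to keep $\phi_t(f)(W)$ inside $N$ rather than shrinking to a smaller $\delta$-ball.
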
 

\begin{proof} 
(1)  By the assumption $\LD_M({\cal X})$ we have 
a neighborhood $\W=\E^u_\ast(i_W, \gamma, W, M; Y)$ of $i_W$ in $\E^u_\ast(W, M; Y)$ and 
an admissible deformation $\phi : \W \times [0,1] \lra \E^u_\ast(W, M; Z)$ for ${\cal X}$. 
In each of the cases (i), (ii) and (iii) the required admissible deformation $(\W_1, \psi)$ is defined as follows: 

(i) Let $(\W_1, \psi) = (\W, \phi)$. 

(ii) Let $\W_1 := \E^u_\ast(i_{W_1}, \gamma; W_1, M; Y)$ and define $\psi$ by  
$$\psi : \W_1 \times [0,1] \lra \E^u_\ast(W_1, M; Z) : \ \ 
\psi_t(f) = 
\left\{ \begin{array}[c]{@{\ }ll}
\phi_t(f|_{W}) & \text{on $W$} \\[2mm] 
f & \text{on $W_1 - W'$}. 
\end{array}
\right.$$ 
\vskip 1mm 
\noindent One can easily check that $\psi$ is well-defined and satisfies the admissibility condition. 
The uniform continuity of $\psi_t(f)$ follows from those of $\phi_t(f|_{W})$ and $f$ and $W_1 - W \subset_u W_1 - W' \subset W_1$. 
For the inverse map $\psi_t(f)^{-1} : f(W_1) \to W_1$, consider the factorization  
$\psi_t(f)^{-1} = (\psi_t(f)^{-1}f)f^{-1}$. 
Since 
$$\psi_t(f)^{-1}f = 
\left\{ \begin{array}[c]{@{\ }ll}
\phi_t(f|_{W})^{-1} f|_{W} & \text{on $W$} \\[2mm] 
\id & \text{on $W_1 - W'$}, 
\end{array}
\right.$$ 
\vskip 1mm 
\noindent the uniform continuity of $\psi_t(f)^{-1}f$ and $\psi_t(f)^{-1}$ follows from those of $\phi_t(f|_{W})^{-1}$, $f$ and $f^{-1}$.   

(iii) Since $\E^u_\ast(W, M; Y_1) \subset \E^u_\ast(W, M; Y)$ and $\E^u_\ast(W, M; Z) \subset \E^u_\ast(W, M; Z_1)$, 
we can define $(\W_1, \psi)$ by $\W_1 = \W \cap \E^u_\ast(W, M; Y_1)$ and 
$$\psi : \W_1 \times [0,1] \lra  \E^u_\ast(W, M; Z_1) : \ \psi_t(f) = \phi_t(f)$$  

(2)(i) In general, if $f : (M', d') \to (M, d)$ is a uniformly continuous map, then 
$$f^{-1}({\cal X}) = (f^{-1}(X), f^{-1}(W'), f^{-1}(W), f^{-1}(Z), f^{-1}(Y)) \in {\cal S}(M', d').$$ 

(ii) Since $W \subset_u N$, we have $O_\e(W) \subset N$ for some $\e > 0$. 
Note that $O_\e(W) \cap \partial N = O_\e(W) \cap \partial M \ (= \partial O_\e(W))$ and $W\cap \partial N = W \cap \partial M$. 
The inclusion $i_N : N \subset M$ induces 
the isometry 
$${i_N}_\ast : \E^u_\ast(i_W, \e, W, N) \ \cong \ \E^u_\ast(i_W, \e, W, M) : \ \ {i_N}_\ast(f) = i_N f.$$ 
Under this isometry 
the correspondence between 
an admissible deformation $\phi$ for ${\cal X}$ and an admissible deformation $\psi$ for ${\cal X}|_N$ 
is described in the following diagram: 
$$\xymatrix@M=4pt{
\E^u_\ast(i_W, \delta, W, M; Y) \ar[r]^{\phi_t} & \E^u_\ast(i_W, \e, W, M; Z) \ar[r]^{\hspace{3mm} \subset} & \E^u_\ast(W, M; Z) \\ 
\E^u_\ast(i_W, \delta, W, N; Y \cap N) \ar[r]_{\psi_t} \ar[u]^{{i_N}_\ast}_{\cong} &\E^u_\ast(i_W, \e, W, N; Z \cap N) \ar[u]_{{i_N}_\ast}^{\cong} \ar[r]^{\hspace{3mm} \subset} & \E^u_\ast(W, N; Z \cap N)
}$$

(3)(ii) The uniform homeomorphism $h$ induces a homeomorphism 
$$h_{\#} : \E^u_\ast(W, M) \cong \E^u_\ast(h(W), M') : \ h_{\#}(f) = hfh^{-1}.$$
The next diagram represents the correspondence between 
admissible deformations $(\W, \phi)$ for ${\cal X}$ and $(\W', \psi)$ for $h({\cal X})$ :  
\vspace{-2mm} 
$$\begin{array}[c]{rc@{\ }c@{\ }cl}
& & & \phi_t & \\
\E^u_\ast(W, M; Y) & \supset & {\cal W} & \lra & \E^u_\ast(W, M; Z) \\[2mm] 
h_{\#} \big \downarrow \approx \hspace{5mm} & & h_{\#} \big \downarrow \approx \ & & \hspace{5mm} h_{\#} \big \downarrow \approx \\[2mm]  
\E^u_\ast(h(W), M'; h(Y)) & \supset & {\cal W}'  & \lra & \E^u_\ast(h(W), M'; h(Z)). \\ 
& & & \psi_t & 
\end{array}$$
This completes the proof. 
\end{proof} 

\begin{lemma}\label{lem_LD_2} Suppose $X \subset_u N \subset M$ and $N$ is an $n$-manifold.  
Then, $\LD_N(X)$ \LLRA $\LD_M(X)$. 
\end{lemma}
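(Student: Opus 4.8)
The plan is to prove the two implications of the equivalence separately. In each one I take an arbitrary admissible tuple with first coordinate $X$ and reduce it to a tuple whose ambient coordinate $W$ lies uniformly inside $N$; on such tuples Lemma~\ref{lem_LD_1}(2) already converts directly between $\LD_N$ and $\LD_M$, so everything hinges on being able to pass to this special shape and back. The hypothesis $X \subset_u N$ is exactly what makes the reduction possible: it gives a basis of uniform neighborhoods of $X$ that lie uniformly inside $N$. Throughout, $\LD_M(X)$ means $\LD_M({\cal X})$ for every ${\cal X} = (X, W', W, Z, Y) \in {\cal S}(M, d)$ (Definition~\ref{notation_LD}(4)), and likewise for $\LD_N(X)$.

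I isolate the reduction once. Given coordinates $X, W', W$ in either ambient, I use $X \subset_u N$ and $X \subset_u W'$ to fix $\e > 0$ with $O_{3\e}(X) \subset N$ and $O_{2\e}(X) \subset W'$, and set $W'_0 := O_\e(X)$ and $W_0 := O_{2\e}(X)$. Then $X \subset_u W'_0 \subset W_0 \subset W$, while $W'_0 \subset_u W_0$ (as $O_\e(O_\e(X)) \subset O_{2\e}(X)$) and $W_0 \subset_u N$ (as $O_\e(W_0) \subset O_{3\e}(X) \subset N$). From $\LD$ of a tuple carrying $(W'_0, W_0)$ I recover $\LD$ of the tuple carrying $(W', W)$: Lemma~\ref{lem_LD_1}(1)(ii), applicable because $W'_0 \subset_u W_0 \subset W$, enlarges $W_0$ to $W$, and then Lemma~\ref{lem_LD_1}(1)(i) enlarges $W'_0$ to $W'$. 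The order is forced, since clause (ii) requires $W'_0 \subset_u W_0$.

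For $\LD_N(X) \Longrightarrow \LD_M(X)$, let ${\cal X} = (X, W', W, Z, Y) \in {\cal S}(M, d)$ and form ${\cal X}_0 = (X, W'_0, W_0, Z, Y) \in {\cal S}(M, d)$. As $W_0 \subset_u N$, Lemma~\ref{lem_LD_1}(2) gives ${\cal X}_0|_N = (X, W'_0, W_0, Z \cap N, Y \cap N) \in {\cal S}(N, d|_N)$ and $\LD_N({\cal X}_0|_N) \Longleftrightarrow \LD_M({\cal X}_0)$; then $\LD_N(X)$ yields $\LD_N({\cal X}_0|_N)$, hence $\LD_M({\cal X}_0)$, and the reduction gives $\LD_M({\cal X})$. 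For the converse, let ${\cal Y} = (X, V', V, Z', Y') \in {\cal S}(N, d|_N)$ and shrink $V', V$ to $V'_0 = O_\e(X)$, $V_0 = O_{2\e}(X)$ as above (now using $X \subset_u V'$ in $N$). Here I must upgrade $Z' \subset_u Y'$ from $N$ to $M$ before I can form an $M$-tuple; I do so by padding, setting $\tilde Y := Y' \cup (M - N)$, so that $O_\delta(Z') \subset \tilde Y$ whenever $O_\delta(Z') \cap N \subset Y'$, i.e.\ $Z' \subset_u \tilde Y$ in $M$. Then ${\cal X} := (X, V'_0, V_0, Z', \tilde Y) \in {\cal S}(M, d)$ satisfies $V_0 \subset_u N$ and ${\cal X}|_N = (X, V'_0, V_0, Z', Y')$ (because $Z' \subset N$ and $\tilde Y \cap N = Y'$), so $\LD_M(X)$ gives $\LD_M({\cal X})$, Lemma~\ref{lem_LD_1}(2) gives $\LD_N(X, V'_0, V_0, Z', Y')$, and the reduction (now carried out in $(N, d|_N)$) gives $\LD_N({\cal Y})$.

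The step I expect to be the main obstacle is the asymmetry of the uniform-neighborhood relation between the two ambients. Restricting $\subset_u$ from $M$ to $N$ is automatic, but the converse direction needs $Z' \subset_u Y'$ to hold in $M$, which it need not; the device that repairs this is the padding $\tilde Y = Y' \cup (M - N)$, chosen precisely so that it makes $Z'$ uniformly interior in $M$ while leaving ${\cal X}|_N$ equal to the given data. Apart from this, the work is the routine bookkeeping of the neighborhood inclusions and the correctly ordered application of Lemma~\ref{lem_LD_1}(1) needed to bring Lemma~\ref{lem_LD_1}(2) into play.
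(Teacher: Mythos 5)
Your proof is correct and follows essentially the same route as the paper: shrink the tuple so that its third coordinate is a small metric neighborhood $O_{2\e}(X)$ lying uniformly inside $N$, transfer between ambients with Lemma~\ref{lem_LD_1}(2), and then recover the original tuple with Lemma~\ref{lem_LD_1}(1)(ii) followed by (1)(i). The only cosmetic difference is in the direction $\LD_M(X) \Rightarrow \LD_N(X)$, where the paper replaces the fifth coordinate by $O_\delta(Z')$ taken in $M$ and afterwards enlarges it back via Lemma~\ref{lem_LD_1}(1)(iii), whereas you pad it to $Y' \cup (M-N)$ so that no enlargement is needed; both devices serve the same purpose of making $Z'$ uniformly interior in $M$ while leaving the restricted tuple unchanged.
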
 

\begin{proof} Since $X \subset_u N$, there exists $\e > 0$ with $O_\e(X) \subset N$. 

(1) $\LD_N(X)$ \LRA $\LD_M(X)$: 

Take any tuple ${\cal X} = (X, W', W, Z, Y) \in {\cal S}(M)$. 
Choose $\delta \in (0, \e/2)$ with $O_{2\delta}(X) \subset W'$ and 
consider the tuple ${\cal Y} = (X, O_\delta(X), O_{2\delta}(X), Z, Y) \in {\cal S}(M)$.  
We have $\LD_N({\cal Y}|_N)$ by $\LD_N(X)$ and 
$\LD_M({\cal Y})$ by Lemma~\ref{lem_LD_1}\,(2) and $O_{2\delta}(X) \subset_u O_\e(X) \subset N \subset M$. 
Hence $\LD_M({\cal X})$ follows from Lemma~\ref{lem_LD_1}\,(1). 

(2) $\LD_M(X)$ \LRA $\LD_N(X)$: 

Take any tuple ${\cal Y} = (X, W', W, Z, Y) \in {\cal S}(N)$. 
There exists $\delta \in (0, \e/2)$ such that $O_{2\delta}(X, N) \subset W'$ and $O_{\delta}(Z, N) \subset Y$.  
Note that $O_{2\delta}(X, N) = O_{2\delta}(X) \subset_u N$ and $O_\delta(Z, N) = O_\delta(Z) \cap N$. 
Consider the tuple ${\cal X} = (X, O_\delta(X), O_{2\delta}(X), Z, O_{\delta}(Z)) \in {\cal S}(M)$. 
Then,  we have $\LD_M({\cal X})$ by $\LD_M(X)$ and $\LD_N({\cal X}|_N)$ by Lemma~\ref{lem_LD_1}\,(2).   
Hence Lemma~\ref{lem_LD_1}\,(1) implies $\LD_N({\cal Y})$ as required. 
\end{proof}

\begin{proof}[\bf Proof of  Proposition~\ref{prop_LD}] \mbox{} 

(1) The assertion follows from Lemma~\ref{lem_LD_1}(3). 

(2) The statement (i) is obvious. 

(ii) For any $X \subset A$, it follows that 
$X \subset_u N \subset M$ and $\LD_N(X)$ \LLRA $\LD_M(X)$ by Lemma~\ref{lem_LD_2}.
 
(3)(i) Take any ${\cal X} = (X, W', W, Z, Y) \in {\cal S}(M, d)$ with $X \subset A\cup B$. 
We have to find a neighborhood $\W$ of $i_W$ in $\E^u_\ast(W, M; Y)$ and 
an admissible deformation $\phi : \W \times [0,1] \lra \E^u_\ast(W, M; Z)$ for ${\cal X}$. 
There exists $\e > 0$ such that $O_{2\e}(A) \subset U$, $O_{2\e}(X) \subset W'$ and $O_{2\e}(Z) \subset Y$.  

(a) Let $X_1 = O_\e(X \cap A)$ and $Z_1 = O_\e(Z)$.    
Then, ${\cal Y} = (X_1, W', W, Z_1, Y) \in {\cal S}(M, d)$ and $X_1 \subset U$. 
Since $U$ : (LD)$_M$, there exists a neighborhoodd $\W_1$ of $i_W$ in $\E^u_\ast(W, M; Y)$ and 
an admissible deformation for ${\cal Y}$, 
$$\psi : \W_1 \times [0,1] \lra \E^u_\ast(W, M; Z_1).$$   
 
(b) Let $X_2 = X \cap B$, $Z_2 = Z\cup (X \cap A)$ and $Y_2 = Z_1 \cup X_1$. 
Then $O_\e(Z_2) = O_\e(Z) \cup O_\e(X \cap A) = Z_1 \cup X_1 = Y_2$, so that 
${\cal Z} = (X_2, W', W, Z_2, Y_2) \in {\cal S}(M, d)$ and $X_2 \subset B$. 
Since $B$ : (LD)$_M$, there exists a neighborhood $\W_2$ of $i_W$ in $\E^u_\ast(W, M; Y_2)$ and 
an admissible deformation for ${\cal Z}$, 
$$\chi : \W_2 \times [0,1] \lra \E^u_\ast(W, M; Z_2).$$  

Since $\psi_1(f) = \id$ on $(W \cap Z_1) \cup X_1 = W \cap Y_2$ for any $f \in \W_1$, we have 
the map 
$$\psi_1 : \W_1 \to \E^u_\ast(W, M; Y_2).$$ 
Since $\psi_1(i_W) = i_W \in \W_2$, 
we can find a neighborhood $\W$ of $i_W$ in $\W_1$ such that $\psi_1(\W) \subset \W_2$. 
Finally, the required admissible deformation for ${\cal X}$ is defined by 
$$\phi : \W \times [0,1] \lra \E^u_\ast(W, M; Z) : \hspace{2mm} 
\phi_t(f) = 
 \left\{ 
\begin{array}[c]{@{\ }ll}
\psi_{2t}(f) & (t \in [0,1/2]) \\[2mm]
\chi_{2t-1}(\psi_1(f)) & (t \in [1/2,1]).  
\end{array} \right.$$ 
Note that $\phi_1(f) = \chi_1(\psi_1(f)) = \id$ on $(W \cap Z_2) \cup X_2 \supset(X \cap A) \cup (X \cap B) =  X$. 

The statement (ii) follows from (i).

(4) $(\Longleftarrow)$: The assertion follows from (2)(i). 

$(\Longrightarrow)$: Take any compact neighborhood $L$ of $cl_MK$ in $M$. Then 
$K \subset_u L$ and $L$ : $\LD_M$ by Example~\ref{exp_EK_covering} (1). 
Thus, $A \cup K$ : $\LD_M$ by (3)(i).  

(5) Suppose the tuple $(M, M, M, \emptyset, \emptyset) \in {\cal S}(M, d)$ has  an admissible deformation $(\W, \phi)$. 
Then $\W$ is a neighborhood of $\id_M$ in $\E^u_\ast(M, M)$ and 
$\phi_t : \W \to \E^u_\ast(M, M)$ satisfies the following conditions: 
\begin{enumerate}
\item[(i)\,] For any $f \in \W$ 
\begin{tabular}[t]{l}
(a) $\phi_0(f) = f$, \ \ (b) $\phi_1(f) = \id_M$, \ \ (c) $\phi_t(f)(M) = f(M)$ \ \ and \\[2mm] 
(d) if $f = \id$ on $\partial M$, then $\phi_t(f) = \id$ on $\partial M$. 
\end{tabular}
\item[(ii)] $\phi_t(\id_M) = \id_M$. 
\end{enumerate}

Since ${\cal H}^u(M) \subset \E^u_\ast(M, M)$, 
it follows that $\U := \W \cap {\cal H}^u(M)$ is a neighborhood of $\id_M$ in ${\cal H}^u(M)$ 
and $\phi_t(\U) \subset {\cal H}^u(M)$ by (i)(c). 
Then, the restriction $\phi_t : \U \to {\cal H}^u(M)$ is a contraction of $\U$ in ${\cal H}^u(M)$ rel $\id_M$. 
By (i)(d) this contraction also restricts to a local contraction in ${\cal H}^u(M, \partial M)$. 
\end{proof} 

\begin{proof}[\bf Proof of  Corollary~\ref{cor_LD}] \mbox{} 

(1) Let $A_1 = A - B$ and take $\e > 0$ with $O_{3\e}(A_1) \subset A$. 
Since $O_{2\e}(A_1) \subset_u A \subset M$ and $A$ : $\LD_A$, 
it follows that $O_{2\e}(A_1)$ : $\LD_A$ and $\LD_M$. 
Let $B_1 = B - O_{\e}(A_1)$. Since $O_{\e}(B_1) \subset B \subset M$ and \break $B$ : $\LD_B$, 
we have that $B_1$ : $\LD_B$ and $\LD_M$. 
Therefore, by Proposition~\ref{prop_LD} (3)(i) it follows that $M = O_{\e}(A_1) \cup B_1$ : $\LD_M$. 

(2) Since $K = cl_M(M - L)$ is compact, there exists a compact neighborhood $C$ of $K$ in $M$ and   
$\e > 0$ such that $O_\e(K) \subset C$. 
Let $L_1 = L - C$. 
Since $O_\e(L_1) \subset L$, $C$ is compact and $M = L_1 \cup C$, from Proposition~\ref{prop_LD} (2), (4)
it follows that 
$$\mbox{$L$ : $\LD_L$ \LRA $L_1$ : $\LD_L$ \LLRA $L_1$ : $\LD_M$ \LLRA $M$ :  $\LD_M$.}$$ 
When $L$ is closed in $M$, since $C \cap L$ is compact and $L = L_1 \cup (C \cap L)$, it follows that  
$$\mbox{$L$ : $\LD_L$ \LLRA $L_1$ : $\LD_L$.}$$ 
The conclusions follows from these implications. 

(3) Let $L = \bigcup_{i=1}^m L_i$ and take $\e > 0$ such that $d(L_i, L_j) > \e$ for any $i \neq j$. 
Then, $L$ is an $n$-manifold and $O_\e(L_i; L) = L_i$ so $L_i \subset_u L_i \subset L$ $(i = 1, \cdots, m)$. 
Hence, by Proposition~\ref{prop_LD} (2), (3) 
$$\mbox{$L_i : \LD$ $(i=1, \cdots, m)$ \LLRA 
$L_i$ : $\LD_L$ $(i=1, \cdots, m)$ \LLRA $L : \LD$.}$$  
Since $M = K \cup L$ and $K$ is compact, from (2) it follows that 

{\rm (a)} $L : \LD$ \LRA $M : \LD$ \ and \ 
{\rm (b)} if $L$ is closed in $M$, then $M : \LD$ \LRA $L : \LD$. \\
The assertions follow from these observations. 
\end{proof}


\section{Examples} 

In this section we discuss some examples of metric manifolds with the property $\LD$. 

\subsection{Manifolds with geometric group actions} \mbox{} 

In this subsection we show that a metric $n$-manifold has the property $\LD$ 
if it admits a locally geometric group action (Theorem~\ref{thm_LD_geom_action}). We refer to \cite[Chapter I.8]{BH} for basic facts on geometric group actions. 
First we recall some related notions. 

Throughout this subsection, $X = (X, d)$ is a locally compact metric space, 
$G$ is a (discrete) group and 
$\Phi : G \times X \to X$ is a continuous action of $G$ on $X$. 
As usual, for $g \in G$ and $x \in X$ the element $\Phi(g,x) \in X$ is denoted by $gx$. 
For a point $x \in X$ the orbit $Gx$ and the isotropy subgroup $G_x$ of $x$ 
are defined by $Gx = \{ gx \mid g \in G \}$ and $G_x = \{ g \in G \mid gx = x \}$ respectively. 
More generally, 
for any subsets $H \subset G$ and $C \subset X$ let $HC = \{ gx \mid g \in H, x \in C \}$ and  
$G_C = \{ g \in G \mid gC = C \}$. 
Then, $G_C$ is a subgroup of $G$ and for any (left) coset $\overline{g} \in G/G_C$ the subset $gC \subset X$ is well-defined. 

The action $\Phi$ of $G$ on $X$ is called geometric if it is proper, cocompact and isometric. 
Here, $\Phi$ is (a) proper if $\{ g \in G \mid gF \cap F \neq \emptyset \}$ is a finite set for any compact subset $F$ of $X$, 
(b) cocompact if the quotient space $X/G$ is compact, and 
(c) isometric if each $g \in G$ acts on $X$ as an isometry. 
Note that (i) the action $\Phi$ is cocompact if and only if $X = GK$ for some compact subset $K$ of $X$ and 
(ii) if $\Phi$ is proper, then ($\alpha$) if $X$ is separable, then $G$ is a countable group since $X$ is $\sigma$-compact, 
($\beta$) for any nonempty compact subset $C$ of $X$ the family $\{ gC \mid g \in G \}$ is locally finite and 
$G_C$ is a finite subgroup of $G$, so ($\gamma$) the orbit $Gx$ is discrete in $X$ for any point $x \in X$. 

In this article we work in a slightly more general setting.  

\begin{defn} We say that the action $\Phi$ of $G$ on $X$ is 
\begin{enumerate}
\item locally isometric if 
for every $x \in X$ there exists $\e \in (0, \infty]$ such that 
\begin{itemize}
\item[] $(\natural)_x$ \ each $g \in G$ maps $O_\e(x)$ isometrically onto $O_\e(gx)$, and 
\end{itemize}
\item locally geometric if it is proper, cocompact and locally isometric. 
\end{enumerate}
\end{defn}

\begin{remark}\label{rem_geom_action} For $x \in X$, 
let $r_x = \sup\{ \e \in [0, \infty] \mid \e : (\natural)_x\} \in [0, \infty]$. 
Then 
\begin{itemize}
\item[(i)\ ] $r_x$ itself satisfies the condtion $(\natural)_x$, so that $\{ \e \in [0, \infty] \mid \e : (\natural)_x\} = [0, r_x]$ and 
\item[(ii)\,] the action $\Phi$ is locally isometric if and only if $r_x > 0$ for each $x \in X$, 
\item[(iii)] if $\gamma \in [0, r_x]$, then $O_\gamma(gx) = gO_\gamma(x)$ for each $g \in G$ and  
$O_\gamma(Fx) = FO_\gamma(x)$ for any subset $F \subset G$. 
\end{itemize}
\end{remark}

\begin{lemma}\label{lem_geom_action} Suppose the action $\Phi$ is locally geometric and  let $x \in X$ be any point. 
\begin{enumerate}
\item The orbit $Gx$ is uniformly discrete. Hence, there exists $\e \in (0, r_x)$ such that the orbit $Gx$ is $4\e$-discrete. 
\end{enumerate}

\begin{enumerate}
\item[(2)] Let $\Lambda$ be a complete set of representatives of cosets in $G/G_x$. 
\begin{itemize}
\item[(i)\,] $O_\e(Gx)$ is the disjoint union of open subsets $O_\e(gx) = gO_\e(x)$ $(g \in \Lambda)$ and 
the family $\{ O_\e(gx) \mid g \in \Lambda \}$ is $2\e$-discrete. 
\item[(ii)] The map $\pi : O_\e(Gx) \to O_\e(x)$ defined by 
$$\pi|_{O_\e(gx)} = g^{-1} : \ O_\e(gx) \cong O_\e(x) \hspace{5mm} \mbox{for each $g \in \Lambda$}$$ 
is a trivial metric covering projection. 
\end{itemize}
\item[(3)] In addition, assume that $X$ is a metric $n$-manifold. 
Let $D$ be a closed $n$-disk neighborhood of $x$ in $O_\e(x)$ and $\delta \in (0, \e/2)$ be such that 
$O_{2\delta}(x) \subset D$. Then $O_\delta(Gx) = GO_\delta(x)$ and it satisfies the condition $\LD_X$. 
\end{enumerate}
\end{lemma}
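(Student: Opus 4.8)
The plan is to prove Lemma~\ref{lem_geom_action} in three parts, with the real content concentrated in~(3); parts (1) and (2) are the scaffolding that makes the local covering-space picture precise.

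For part~(1), I would argue that the orbit $Gx$ is uniformly discrete directly from properness and local isometry. Since $r_x > 0$ by Remark~\ref{rem_geom_action}, fix any $\gamma \in (0, r_x)$. The closed set $cl_X O_{\gamma}(x)$ is compact (local compactness, shrinking $\gamma$ if needed), so properness gives that only finitely many $g \in G$ satisfy $gO_\gamma(x) \cap O_\gamma(x) \neq \emptyset$; of these only those in $G_x$ fix $x$. A short argument then yields a lower bound $\eta > 0$ on $d(x, gx)$ over all $g \notin G_x$, whence $Gx$ is $\eta$-discrete by the isometric action of each $g$. Choosing $\e \in (0, r_x)$ with $4\e \leq \eta$ finishes~(1).

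For part~(2), the statements follow by assembling Remark~\ref{rem_geom_action}(iii) with the $4\e$-discreteness. Writing $O_\e(Gx) = \bigcup_{g \in \Lambda} O_\e(gx)$ and using $O_\e(gx) = gO_\e(x)$, the $4\e$-discreteness of $Gx$ forces the balls $O_\e(gx)$ $(g \in \Lambda)$ to be pairwise $2\e$-discrete (two points in distinct balls are within $\e$ of centers that are $\geq 4\e$ apart). The map $\pi$ is well-defined because distinct representatives give disjoint balls, and on each sheet $\pi|_{O_\e(gx)} = g^{-1}$ is an isometry onto $O_\e(x)$ by $(\natural)_x$; verifying conditions $(\ast)_1$, $(\ast)_2$, $(\ast)_3$ of the definition of a metric covering projection is then routine, with $(\ast)_3$ coming from the fact that $g^{-1}$ is an isometry and distances only shrink under the quotient. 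I would spell out that $\pi$ is a \emph{trivial} covering since $\Lambda$ indexes the sheets globally.

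For part~(3), the goal is $\LD_X\big(O_\delta(Gx)\big)$, and here I would reduce to a result already available in the excerpt. The key observation is that $O_\delta(Gx) = GO_\delta(x)$ sits inside the total space $O_\e(Gx)$ of the trivial metric covering projection $\pi$ from~(2), whose base $O_\e(x)$ contains the compact $n$-disk $D$ with $O_{2\delta}(x) \subset D$. The plan is to apply Lemma~\ref{lem_LD_2} to pass between $\LD$ in $X$ and $\LD$ in the open $n$-submanifold $N := O_\e(Gx)$: since $O_{2\delta}(Gx) \subset_u O_\e(Gx)$ (using $2\delta < \e$ and the $4\e$-discreteness so that the $2\delta$-enlargement stays within the sheets), we get $O_\delta(Gx) \subset_u N \subset X$, and Lemma~\ref{lem_LD_2} reduces $\LD_X\big(O_\delta(Gx)\big)$ to $\LD_N\big(O_\delta(Gx)\big)$. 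Now over each sheet the projection $\pi$ restricts to an isometry onto $O_\e(x)$, under which $O_\delta(gx)$ corresponds to $O_\delta(x) \subset_u D$; since $D$ is relatively compact in $O_\e(x)$, the Edwards--Kirby local deformation theorem (Example~\ref{exp_EK_covering}(1)) gives $\LD_{O_\e(x)}\big(O_\delta(x)\big)$, and this lifts through the trivial covering (Example~\ref{exp_EK_covering}(2), or its proof via pulling back admissible deformations sheet-by-sheet) to give $\LD_N\big(O_\delta(Gx)\big)$. The main obstacle I anticipate is the lifting step: one must check that an admissible deformation constructed on the base $O_\e(x)$ pulls back to a \emph{uniformly} continuous admissible deformation on the disjoint union of sheets, i.e. that the uniform-topology estimates and the uniform continuity of the embeddings and their inverses survive the sheet-wise construction. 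This is exactly the content captured by the covering-space deformation theorem of \cite{Ya}, so the cleanest route is to invoke Example~\ref{exp_EK_covering}(2) after verifying that $\pi$ restricted over $O_\delta(Gx)$ together with the surrounding uniform neighborhood realizes a metric covering projection onto a compact base, which the disk $D$ supplies.
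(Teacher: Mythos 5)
Your proposal is correct and follows essentially the same route as the paper: discreteness of the orbit plus local isometry for (1), the $4\e$-discreteness argument for (2), and --- after you rightly discard the sheet-by-sheet lifting of Edwards--Kirby --- restriction of $\pi$ to $\pi^{-1}(D) \to D$, a metric covering projection onto a compact manifold, combined with Example~\ref{exp_EK_covering}\,(2) and Proposition~\ref{prop_LD}\,(2) for (3). The only point to watch in (1) is that the action is merely \emph{locally} isometric, so transferring the bound $d(x, g^{-1}hx) \geq \eta$ to $d(gx, hx) \geq \eta$ must go through $g(O_\eta(x)) = O_\eta(gx)$ for $\eta \leq r_x$ rather than a global isometry, exactly as the paper does.
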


\begin{proof} 
(1) Since $\Phi$ is proper and locally isometric, it follows that $r_x > 0$ and the orbit $Gx$ is discrete. 
Hence, there exists $\delta \in (0, r_x)$ such that $O_\delta(x) \cap Gx = \{ x \}$. 
Then $Gx$ is $\delta$-discrete. 
In fact, if $g, h \in G$ and $gx \neq hx$, then $g^{-1}hx \neq x$ so that 
$g^{-1}hx \not\in O_\delta(x)$. 
Since $\delta \in (0, r_x)$, it follows that $gO_\delta(x) = O_\delta(gx)$ and so $hx \not\in O_\delta(gx)$. 

(2) (i) For any distinct $g, h \in \Lambda$ it follows that $gx \neq hx$, so $d(gx, hx) \geq 4\e$ since $Gx$ is $4\e$-discrete,   
hence $d(O_\e(gx), O_\e(hx)) \geq 2\e$. 

(ii) Since $\e \in (0, r_x)$, each $g \in \Lambda$ induces an isometry $g : O_\e(x) \cong O_\e(gx)$. 
Hence, the restriction $\pi|_{O_\e(gx)} = g^{-1} : O_\e(gx) \cong O_\e(x)$ is a well-defined isometry. 
For each $y \in O_\e(x)$ the fiber of $y$ is given by $\pi^{-1}(y) = \Lambda y$.  
Since $\{ O_\e(gx) \mid g \in \Lambda \}$ is $2\e$-discrete and $gy \in O_\e(gx)$ for each $g \in \Lambda$, 
it follows that $\pi^{-1}(y)$ is also $2\e$-discrete. 

For any $y, y' \in O_\e(Gx)$ we have $d(\pi(y), \pi(y')) \leq d(y, y')$. In fact, 
if $y, y' \in O_\e(gx)$ for some $g \in \Lambda$, then $d(\pi(y), \pi(y')) = d(y, y')$ since $\pi|_{O_\e(gx)}$ is an isometry, 
and if $y \in O_\e(gx)$ and $y' \in O_\e(hx)$ for some distinct $g, h \in \Lambda$, then 
$d(\pi(y), \pi(y')) \leq {\rm diam}\,O_\e(x) \leq 2\e \leq d(O_\e(gx), O_\e(hx)) \leq d(y,y')$. 

(3) Since $\delta \in (0, r_x)$, we have $O_\delta(Gx) = GO_\delta(x)$. 
Let $N = \pi^{-1}(D)$. 
Since the restriction $\pi : N \to D$ is also a metric covering projection, 
it follows that $N$ is an $n$-manifold and 
satisfies $\LD$ by Example~\ref{exp_EK_covering} (2).   
Since $O_{2\delta}(x) \subset D$ and $2\delta \in (0, r_x)$, we have 
$$O_{2\delta}(Gx) = O_{2\delta}(\Lambda x) 
= \Lambda O_{2\delta}(x) \subset \Lambda D = N.$$ 
Then, by Proposition~\ref{prop_LD}\,(2) $O_\delta(Gx)$ satisfies the condition $\LD_N$ and hence $\LD_X$. 
\end{proof}

\begin{theorem}\label{thm_LD_geom_action} 
A metric manifold has the property \LD\,if it admits a locally geometric group action. 
\end{theorem}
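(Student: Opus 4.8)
The plan is to reduce the theorem to the already-established additivity of $\LD$ (Proposition~\ref{prop_LD}\,(3)) combined with the cocompactness of the action and the local structure provided by Lemma~\ref{lem_geom_action}. The key observation is that a locally geometric action lets us cover $M$ by finitely many invariant pieces, each of which carries the $\LD$ property for an essentially local reason, and then glue these together.

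\medskip
\noindent\textbf{Step 1: Exploit cocompactness.}
First I would use cocompactness to write $M = GK$ for some compact subset $K \subset M$. Covering $K$ by finitely many of the neighborhoods $O_\delta(x)$ produced in Lemma~\ref{lem_geom_action}\,(3), I obtain finitely many points $x_1, \dots, x_p \in M$ and radii $\delta_i \in (0, \e_i/2)$ so that $K \subset \bigcup_{i=1}^p O_{\delta_i}(x_i)$. Applying the group $G$ and using that each $O_{\delta_i}(Gx_i) = GO_{\delta_i}(x_i)$ is $G$-invariant, I get
$$M = GK \subset \bigcup_{i=1}^p G\,O_{\delta_i}(x_i) = \bigcup_{i=1}^p O_{\delta_i}(Gx_i) \subset M,$$
so $M = \bigcup_{i=1}^p O_{\delta_i}(Gx_i)$ is a finite union of the orbit-neighborhoods.

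\medskip
\noindent\textbf{Step 2: Each piece has $\LD_M$ via a uniform neighborhood.}
By Lemma~\ref{lem_geom_action}\,(3), each $A_i := O_{\delta_i}(Gx_i)$ already satisfies $\LD_M$ (equivalently $\LD_{(M,d)}$). To apply Proposition~\ref{prop_LD}\,(3)(ii), which requires each set in the union to admit a uniform neighborhood satisfying $\LD_M$, I would verify that $A_i$ itself is a uniform neighborhood of $A_i$: indeed $O_{\delta_i}(Gx_i)$ is open and $Gx_i$ is $4\e_i$-discrete, so one checks that $O_{\delta_i/2}(Gx_i) \subset_u O_{\delta_i}(Gx_i)$, giving $A_i$ (or the slightly smaller orbit-neighborhood) as the required uniform neighborhood of itself carrying $\LD_M$. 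Thus each $A_i$ has a uniform neighborhood with property $\LD_{(M,d)}$.

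\medskip
\noindent\textbf{Step 3: Assemble via additivity.}
With $M = \bigcup_{i=1}^p A_i$ a finite union, each $A_i$ admitting a uniform neighborhood satisfying $\LD_{(M,d)}$, Proposition~\ref{prop_LD}\,(3)(ii) immediately yields that $M = \bigcup_{i=1}^p A_i$ satisfies $\LD_{(M,d)}$, which by the remark following Definition~\ref{notation_LD} is exactly the statement $(M,d) : \LD$.

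\medskip
\noindent\textbf{Main obstacle.}
The substantive content is packaged inside Lemma~\ref{lem_geom_action}, which already supplies both the trivial metric covering structure near a single orbit (feeding Example~\ref{exp_EK_covering}\,(2)) and the conclusion $\LD_X$ for each orbit-neighborhood. So the only genuine work remaining in the proof is the bookkeeping of Step~1 — ensuring that finitely many charts suffice (this is where properness and cocompactness jointly enter, via $X/G$ compact) — and confirming in Step~2 that the orbit-neighborhoods are uniform neighborhoods of themselves so that the hypotheses of the additivity proposition are literally met. I expect the finiteness/uniform-neighborhood verification to be the point requiring the most care, since one must track the discreteness constants $\e_i$ to guarantee uniformity of the neighborhoods rather than merely openness.
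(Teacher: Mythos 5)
Your proposal is correct and follows essentially the same route as the paper's proof: cover the compact set $K$ from cocompactness by finitely many orbit-neighborhoods supplied by Lemma~\ref{lem_geom_action}\,(3), and assemble via Proposition~\ref{prop_LD}\,(3)(ii). The only point to tidy is the passing claim that $A_i$ is a uniform neighborhood of itself (false for an open metric ball); your own parenthetical fix --- taking the union over the half-radius sets $O_{\delta_i/2}(Gx_i)$ with $O_{\delta_i}(Gx_i)$ as the uniform neighborhood carrying $\LD_M$ --- is exactly the radius bookkeeping the paper performs with $O_{\e(x_i)}(Gx_i)\subset_u O_{2\e(x_i)}(Gx_i)$.
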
 

\begin{proof}
Suppose a metric manifold $(M,d)$ admits a locally geometric group action $\Phi : G \times M \to M$. 
By Lemma~\ref{lem_geom_action}\,(3) and Remark~\ref{rem_geom_action}\,(iii) 
for each point $x \in M$ there exists $\e(x) > 0$ such that 
$O_{\e(x)}(Gx) = GO_{\e(x)}(x)$ and $O_{2\e(x)}(Gx)$ : $\LD_M$. 
Since the group action is cocompact, $M = GK$ for some compact subset $K \subset M$ and 
there exist finitely many points $x_1, \cdots, x_m \in K$ such that 
$K \subset \cup_{i=1}^m O_{\e(x_i)}(x_i)$.  
Then $M = GK = \cup_{i=1}^m O_{\e(x_i)}(Gx_i)$ and, 
since $O_{2\e(x_i)}(Gx_i)$ : $\LD_M$ for $i = 1, \cdots, m$, 
Proposition~\ref{prop_LD}\,(3)\,(ii) implies that $M$ : $\LD$. 
This completes the proof. 
\end{proof} 


\subsection{Typical metric ends} 

\begin{example}\label{example_Euclidean} (Euclidean ends) 

(1) The Euclidean space $\IR^n$ with the standard Euclidean metric admits 
the canonical geometric action of $\IZ^n$ 
(and the associated Riemannian covering projection $\pi : \IR^n \to \IR^n/\IZ^n$ onto the flat torus). 
Therefore, $\IR^n$ has the property $\LD$. 

(2) The half space $\IR^n_{\geq 0} = \{ \bs{x} \in \IR^n \mid x_n \geq 0 \}$ also has the property $\LD$. 
This follows from Example~\ref{exp_collar}. In fact, 
$\IR^n$ : (LD) and 
the boundary collar $E = \IR^{n-1} \times [0,3]$ of $\IR^n_{\geq 0}$ also has the property $\LD$ since it admits 
the geometric action of $\IZ^{n-1}$ defined by 
$$\mbox{${m} \cdot ({y}, z) = (y+ m, z)$ \ \ $({m} \in \IZ^{n-1}, ({y}, z) \in E)$.}$$ 

(3) By Corollary~\ref{cor_LD}\,(2) the Euclidean ends $\IR^n - O_r(0)$ $(r > 0)$ and 
the half Euclidean ends $\IR^n_{\geq 0} - O_r(0)$ $(r > 0)$ have the property $\LD$. 
\end{example}

\begin{example}\label{example_Hyperbolic} (Hyperbolic ends) 

(1) The Hyperbolic space $\IH^n$ admits a Riemannian covering projection onto a closed hyperbolic manifold. 
Hence, $\IH^n$ has the property $\LD$. 
For any $\kappa< 0$, the space form $M^n_\kappa$ has the property $\LD$, 
since it is homothetic to $\IH^n$. 
By Corollary~\ref{cor_LD}\,(2) the hyperbolic ends 
$M^n_\kappa - O_r(o)$ $(o \in M^n_\kappa, r > 0)$ also have the property $\LD$. 

(2) Consider any horosphere $\Sigma$ of $\IH^n$. Let $D$ and $E$ denote the interior and exterior of $\Sigma$. 
Then $\overline{D} = \Sigma \cup D$ and $\overline{E} = \Sigma \cup E$ have the property $\LD$. 
Hence, the ends $\overline{D} - O_r(o)$ and $\overline{E} - O_r(o)$ 
 $(o \in \IH^n, r > 0)$ also have the property $\LD$, 
 whenever they are $n$-manifolds. 
 (The exceptional case appears when $O_r(o)$ touches $\Sigma$ in $D$ and $E$ respectively.)

To see this, consider the upper half space model of $\IH^n$ (cf. \cite[\S4.6.]{Ra}), 
in which the underlying space is $\IR^n_{>0} := \{ x \in \IR^n \mid x_n > 0  \} = \IR^{n-1} \times (0, \infty)$ and the metric $d$ is given by 
$$\cosh d(x, y) = 1 + \frac{|x - y|^2}{2x_ny_n} \hspace{5mm} (x, y \in \IR^n_{>0}).$$
Then, it suffices to examine the horosphere $\Sigma_1 := \{ x \in \IR^n \mid x_n = 1 \}$,  
since any horosphere $\Sigma$ of $\IH^n$ admits an isometry $h$ of $\IH^n$ with $h(\Sigma) =  \Sigma_1$. 

The interior and exterior of $\Sigma_1$ are given by 
$D_1 = \IR^{n-1} \times (1, \infty)$ and $E_1 = \IR^{n-1} \times (0, 1)$. 
The claim $\overline{D_1}$ : (LD) follows from Example~\ref{exp_collar}. In fact, 
$\IH^n$ : (LD) and 
the collar $\IR^{n-1} \times [1,4]$ of $\Sigma_1 = \IR^{n-1} \times \{ 1 \}$ in $\overline{D_1} = \IR^{n-1} \times [1,\infty)$ has the property $\LD$ since it admits 
the geometric action of $\IZ^{n-1}$ defined by 
$$\mbox{${m} \cdot ({y}, z) = (y+ m, z)$ \ \ $({m} \in \IZ^{n-1}, ({y}, z) \in \IR^{n-1} \times [1,4])$.}$$ 
The condition (ii) in Example~\ref{exp_collar} follows from the fact that 
$$\cosh d(\IR^{n-1} \times \{ r \}, \IR^{n-1} \times \{ s \}) = 1 + \frac{(r-s)^2}{2rs} \hspace{5mm} (r, s > 0).$$ 

A similar argument shows that $\overline{E_1}$ : $\LD$. 
\end{example}

\begin{example}\label{example_Cylinder} (Cylindrical ends) \\
Suppose $(N, \rho)$ is a compact metric $n$-manifold. 

(1) The cylinder over $(N, \rho)$ is the metric space $(M ,d)$, 
where $M = N \times \IR$ and the metric $d$ is defined by 
$$d((x,s), (y,t)) = \sqrt{\rho(x,y)^2 + |s-t|^2} \ \ \ ((x,s), (y,t) \in M).$$ 
The group $\IZ$ acts on $(M ,d)$ geometrically by 
$$m \cdot (x,s) = (x, s+m) \ \ \ ((x,s) \in M, m \in \IZ).$$
Therefore, the cylinder $(M ,d)$ has the property $\LD$. 

(2) The cylindrical end over $(N, \rho)$ is the half product $N \times [0, \infty)$ with the metric $d$ defined in (1).
It has the property $\LD$ by (1) and Corollary~\ref{cor_LD}\,(3). 
\end{example}

In Example~\ref{example_End}\,(2), if each $(L_i, d)$ is an end of  either Euclidean, hyperbolic or cylindrical type, 
then $M$ has the property $\LD$. 


\subsection{The $\kappa$-cone ends $(\kappa \leq 0)$ over compact Lipschitz metric manifolds} \mbox{} 

In this subsection we extend the results in Examples~\ref{example_Euclidean} and ~\ref{example_Hyperbolic}
to the case of $\kappa$-cone ends $(\kappa \leq 0)$ over compact Lipschitz metric manifolds.  

By Example~\ref{exp_cone_sph} the $\kappa$-cone $C_\kappa({\Bbb S}^n)$ is isometric to $M_\kappa^{n+1}$ for $\kappa \leq 0$ and $C_0({\Bbb S}^n_{\geq 0})$ is isometric to $\IR^{n+1}_{\geq 0}$. 
Examples~\ref{example_Euclidean}, ~\ref{example_Hyperbolic} imply the following basic conclusion. 

\begin{lemma}\label{lem_cone_sph} 
\begin{itemize}
\item[] \hspace*{-14mm} {\rm (1)} The $0$-cones $C_0({\Bbb S}^n)$, $C_0({\Bbb S}^n_{\geq 0})$ and their ends $C_0({\Bbb S}^n)_r$,$C_0({\Bbb S}^n_{\geq 0})_r$ $(r > 0)$ have the property \LD. 
\item[(2)] For any $\kappa < 0$, the $\kappa$-cone $C_\kappa({\Bbb S}^n)$ and its ends $C_\kappa({\Bbb S}^n)_r$ $(r > 0)$ have the property \LD. 
\end{itemize}
\end{lemma}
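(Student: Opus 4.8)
The plan is to reduce Lemma~\ref{lem_cone_sph} directly to the results already established in Examples~\ref{example_Euclidean} and~\ref{example_Hyperbolic} by invoking the isometries recorded in Example~\ref{exp_cone_sph}. The essential point is that the property \LD\ is a uniform-homeomorphism invariant: by Proposition~\ref{prop_LD}\,(1)(ii), if $h : (M,d) \approx (M',d')$ is a uniform homeomorphism (in particular an isometry), then $(M,d) : \LD$ \LLRA $(M',d') : \LD$. So the whole lemma is a matter of transporting known \LD\ spaces across the canonical cone isometries and then handling the ends via Corollary~\ref{cor_LD}.

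First I would prove statement (1). By Example~\ref{exp_cone_sph} there is an isometry $C_0({\Bbb S}^n) \approx \IR^{n+1}$, and $\IR^{n+1}$ has the property \LD\ by Example~\ref{example_Euclidean}\,(1); hence $C_0({\Bbb S}^n) : \LD$ by Proposition~\ref{prop_LD}\,(1)(ii). Likewise the isometry of pairs $\phi : (C_0({\Bbb S}^n), C_0({\Bbb S}^n_{\geq 0})) \approx (\IR^{n+1}, \IR^{n+1}_{\geq 0})$ from Example~\ref{exp_cone_sph} carries $C_0({\Bbb S}^n_{\geq 0})$ isometrically onto $\IR^{n+1}_{\geq 0}$, which has \LD\ by Example~\ref{example_Euclidean}\,(2); so $C_0({\Bbb S}^n_{\geq 0}) : \LD$. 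For statement (2), the same reasoning with the isometry $C_\kappa({\Bbb S}^n) \approx M_\kappa^{n+1}$ $(\kappa < 0)$ and the fact that $M_\kappa^{n+1}$ has \LD\ (Example~\ref{example_Hyperbolic}\,(1), via homothety to $\IH^{n+1}$) gives $C_\kappa({\Bbb S}^n) : \LD$.

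It then remains to pass from the full cones to their ends $C_\kappa({\Bbb S}^n)_r$ and $C_0({\Bbb S}^n_{\geq 0})_r$ for $r>0$. Here I would apply Corollary~\ref{cor_LD}\,(2): writing $M$ for the relevant cone and $L = M_r$ for the cone end, the complement $M - L = \{ ty \mid t < r \}$ is a bounded, hence relatively compact, neighborhood of the cone point (the cone is proper, being isometric to a model space), and $L$ is a closed $n$-submanifold with frontier the sphere $\{ r \} \times {\Bbb S}^n$ (respectively $\{ r \} \times {\Bbb S}^n_{\geq 0}$). Since $M : \LD$ has just been established, Corollary~\ref{cor_LD}\,(2)(i) yields $L : \LD$, which is the desired conclusion for the ends. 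The half-sphere case $C_0({\Bbb S}^n_{\geq 0})_r$ is handled identically, noting that the frontier meets the boundary transversally so that $L$ is a genuine $n$-submanifold.

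I do not expect a serious obstacle, since the lemma is essentially a transport-of-structure statement; the only points requiring a little care are verifying the hypotheses of Corollary~\ref{cor_LD}\,(2) — namely that $M - L$ is relatively compact and that $L$ is an $n$-submanifold with appropriately flat frontier — and checking that the isometries of Example~\ref{exp_cone_sph} are genuinely isometries of pairs in the half-space case so that the boundary is respected. The relative compactness of $M - L$ is where the properness of the model spaces is implicitly used, and it is the one hypothesis worth stating explicitly rather than waving through.
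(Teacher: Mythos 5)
Your argument is correct and is essentially the paper's own: the paper derives Lemma~\ref{lem_cone_sph} precisely by transporting \LD\ across the isometries $C_\kappa({\Bbb S}^n) \approx M_\kappa^{n+1}$ and $C_0({\Bbb S}^n_{\geq 0}) \approx \IR^{n+1}_{\geq 0}$ of Example~\ref{exp_cone_sph} and then citing Examples~\ref{example_Euclidean} and~\ref{example_Hyperbolic}, where the ends are already handled by Corollary~\ref{cor_LD}\,(2). The one slip is a citation: passing from the cone $M$ to its closed end $L = M_r$ uses the implication $M : \LD$ \LRA $L : \LD$ of Corollary~\ref{cor_LD}\,(2)(ii) (valid since, as you note, $L$ is closed in $M$), not part (2)(i), which goes in the opposite direction.
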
 

Below we extend this ${\Bbb S}^n$-case to the case of any compact Lipschitz metric manifold.

\begin{lemma}\label{lem_Eucliedan} 
\begin{itemize}
\item[] \hspace*{-14mm} {\rm (1)} Suppose $K \subset V \subset \IR^n_{\geq 0}$, $K$ is compact and $V$ is open. Then 
$C_0(K)_1 : \LD$ in $C_0(V)^\times$. 

\item[(2)] Suppose $K \subset V \subset \IR^n$, $K$ is compact and $V$ is open. Then, 
$C_\kappa(K)_1 : \LD$ in $C_\kappa(V)^\times$ for any $\kappa < 0$.  
\end{itemize}
\end{lemma}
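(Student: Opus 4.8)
The plan is to reduce the statement to the model cones $C_\kappa(\IS^n)=M_\kappa^{n+1}$ and $C_0(\IS^n_{\geq0})=\IR^{n+1}_{\geq0}$ of Example~\ref{exp_cone_sph}, which already carry the property \LD\ by Lemma~\ref{lem_cone_sph}, and then to glue the resulting local information by the additivity in Proposition~\ref{prop_LD}. Throughout write $M:=C_\kappa(V)^\times$ for the punctured cone; since $V$ is an open $n$-manifold, $M$ is an $(n+1)$-manifold and $C_\kappa(K)_1\subset M$. I would treat both cases uniformly, the only difference being the choice of model $\mathcal M$, namely $\mathcal M=C_\kappa(\IS^n)$ when $V\subset\IR^n$ and $\kappa<0$, and $\mathcal M=C_0(\IS^n_{\geq0})=\IR^{n+1}_{\geq0}$ when $V\subset\IR^n_{\geq0}$ and $\kappa=0$; in each case $\mathcal M$ has \LD.

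First I would build a local model around an arbitrary point $p\in K$. Choosing a small open Euclidean ball $B\ni p$ with $\overline B\subset V$ and $\diam B<\pi$, I would fix a bi-Lipschitz chart $\psi\colon B\to\psi(B)\subset\IS^n$ onto an open set of spherical diameter $<\pi$; a geodesic chart of the round sphere furnishes this, and in the boundary case a boundary chart of $\IS^n_{\geq0}$ carrying $B\cap\partial\IR^n_{\geq0}$ into $\partial\IS^n_{\geq0}$. Because both $d_0$ on $B$ and $d_1$ on $\psi(B)$ stay below $\pi$, the truncations are inactive, so $\psi_\pi=\psi$ and $(\psi^{-1})_\pi=\psi^{-1}$ are Lipschitz; by Lemma~\ref{lem_Lip} the cone extension $C_\kappa(\psi)$ is then a bi-Lipschitz homeomorphism of $N:=C_\kappa(B)_+\subset M$ onto the open submanifold $\Omega:=C_\kappa(\psi(B))_+\subset\mathcal M$. (Here the subspace metric of $N$ in $M$ coincides with the intrinsic $\kappa$-cone metric, since $\tilde d_\kappa$ depends on the base only through $d_\pi$ and $d_0|_B$ agrees with the ambient Euclidean distance.)

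Next I would transfer \LD\ along this chart and then glue. Fix a compact neighborhood $Q\subset B$ of $p$ and an inner radius $r\in(0,1]$, and set $A:=C_\kappa(Q)_r$ and $A':=C_\kappa(\psi)(A)=C_\kappa(\psi(Q))_r$. Running the chain (i) $A\subset_u N$ in $M$, so $A:\LD_N\Leftrightarrow A:\LD_M$ by Proposition~\ref{prop_LD}(2)(ii); (ii) $A:\LD_N\Leftrightarrow A':\LD_\Omega$ by the bi-Lipschitz invariance in Proposition~\ref{prop_LD}(1); (iii) $A'\subset_u\Omega$ in $\mathcal M$, so $A':\LD_\Omega\Leftrightarrow A':\LD_{\mathcal M}$ again by Proposition~\ref{prop_LD}(2)(ii); and (iv) $A':\LD_{\mathcal M}$ because $\mathcal M$ has \LD\ (Lemma~\ref{lem_cone_sph}), I obtain $C_\kappa(Q)_r:\LD_M$ for every such $Q$ and $r$. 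To globalize, for each $p\in K$ I choose nested compact neighborhoods $Q'_p\subset{\rm int}\,Q_p$, cover $K$ by finitely many ${\rm int}\,Q'_{p_i}$, and put $A_i:=C_\kappa(Q'_{p_i})_1$ and $U_i:=C_\kappa(Q_{p_i})_{1/2}$. Then $U_i:\LD_M$ (the case $r=\frac12$) and $A_i\subset_u U_i$ in $M$, whence $\bigcup_iA_i:\LD_M$ by Proposition~\ref{prop_LD}(3)(ii); since $C_\kappa(K)_1\subset\bigcup_iA_i$, Proposition~\ref{prop_LD}(2)(i) gives $C_\kappa(K)_1:\LD$ in $M$, as required.

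I expect the main obstacle to be the bookkeeping of uniform neighborhoods in the $\kappa$-cone metric used in steps (i) and (iii) and in the containment $A_i\subset_u U_i$. Unlike in a product, a fixed metric ball in $C_\kappa(\,\cdot\,)$ corresponds to an angular neighborhood whose angular radius shrinks as the radial coordinate grows, as is visible from the identity $\lambda_\kappa(\tilde d_\kappa(tx,ty))=\lambda_\kappa(2t)\sin\frac12 d_\pi(x,y)$ of Remark~\ref{rem_kappa-cone}; one must check that a compact-in-open containment of the bases together with a strictly positive radial gap does produce a genuine uniform neighborhood in $M$ and in $\mathcal M$, and simultaneously that the $d_\pi$-truncation stays inactive so that Lemma~\ref{lem_Lip} applies on the nose. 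The remaining ingredients, namely the existence of the bi-Lipschitz spherical charts and the additivity manipulations, are routine given Lemma~\ref{lem_Lip} and Proposition~\ref{prop_LD}.
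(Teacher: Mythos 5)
Your argument is correct, but it takes a genuinely more local route than the paper's. The paper exploits the fact that the base here is a Euclidean (half-)ball and uses a \emph{single global chart}: since $K$ is compact it picks one large closed ball $B=\{x\in\IR^n_{\geq0}\mid \|x\|\le r\}$ (resp.\ $B\subset\IR^n$) with $K\subset_u B$, a diffeomorphism of $B$ onto the corner disk $C=\{y\in\IS^n_{\geq0}\mid y_1\ge0\}$ (resp.\ onto $\IS^n_{\geq0}\subset\IS^n$), cones it up via Lemma~\ref{lem_Lip} to a bi-Lipschitz homeomorphism of pairs $(C_\kappa(B)^\times,C_\kappa(K)_1)\approx(C_\kappa(C)^\times,C_\kappa(L)_1)$, and then runs your transfer chain (Proposition~\ref{prop_LD}(1) and (2)(ii), starting from Lemma~\ref{lem_cone_sph}) exactly once --- no covering, no additivity. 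You instead use finitely many small geodesic charts and reassemble with Proposition~\ref{prop_LD}(3)(ii); this is precisely the mechanism the paper defers to Proposition~\ref{prop_k-cone_LD}, where the base is a general compact Lipschitz manifold and small charts are unavoidable, so in effect you have inlined that covering argument into the lemma. What the paper's version buys is brevity and a cleaner division of labour (one chart here, the covering argument only once, later); what yours buys is that it would survive essentially unchanged if $B$ were not globally chartable, at the cost of redoing the additivity step twice in the overall development. The points you flag as needing care do go through: by Remark~\ref{rem_kappa-cone}(2), for $sx\in C_\kappa(Q)_r$ and $ty$ with $y$ outside an open set containing the compact set $Q$ one has either $|s-t|\ge r/2$ or $\lambda_\kappa^2(\tilde d_\kappa(sx,ty))\ge\lambda_\kappa(2r)\lambda_\kappa(r)\sin^2(\delta_0/2)$ with $\delta_0=d(Q,\,\cdot\,)>0$, so a compact-in-open containment of bases plus a radial gap does give a genuine uniform neighborhood; and the truncation issue is moot since, as noted after Remark~\ref{rem_kappa-cone}, a $K$-Lipschitz map with $K\ge1$ is automatically $K$-Lipschitz for the truncated metrics. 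Only one presentational caveat: in the boundary case your ``small ball $B$'' should be read as $B\cap\IR^n_{\geq0}$ with a boundary chart into $\IS^n_{\geq0}$, as you indicate.
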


\begin{proof} 
(1) Choose $r > 0$ such that $K \subset_u B \equiv \{ x \in \IR^n_{\geq 0} \mid \| x \| \leq r \} \subset \IR^n_{\geq 0}$ and 
let $\partial_- B := \{ x \in B \mid x_n = 0 \}$. 
Consider the closed $n$-disk with corner, $C \equiv \{ y \in {\Bbb S}^n_{\geq 0} \mid y_1 \geq 0 \}$ in ${\Bbb S}^n_{\geq 0}$ 
and let $\partial_- C \equiv \{ y \in C \mid y_{n+1} = 0 \}$. 
There exists a diffeomorphism $h : (B, \partial_- B) \approx (C, \partial_- C)$. 
Let $L = h(K)$. 
Since $h$ is a Lipschitz homeomorphism, by Lemma~\ref{lem_Lip} we have   
a Lipschitz homeomorphism of pairs, 
$$C(h) : (C_0(B)^\times, C_0(K)_1) \approx (C_0(C)^\times, C_0(L)_1).$$ 

By Lemma~\ref{lem_cone_sph} (1) $C_0({\Bbb S}^n_{\geq 0})$ : \LD. 
Since $L \subset_u C \subset {\Bbb S}^n_{\geq 0}$ and 
$C_0(L)_1 \subset_u C_0(C)^\times \subset C_0({\Bbb S}^n_{\geq 0})$, we have 
$C_0(L)_1$ : (LD) in $C_0({\Bbb S}^n_{\geq 0})$ and  in $C_0(C)^\times$.
Hence, by Proposition~\ref{prop_LD} (1) $C_0(K)_1$ : (LD) in $C_0(B)^\times$. 
Then,  $C_0(K)_1$ : (LD) in $C_0(\IR^n_{\geq 0})^\times$ and in $C_0(V)^\times$, since 
$C_0(K)_1 \subset_u C_0(B)^\times \subset C_0(\IR^n_{\geq 0})^\times$ 
and $C_0(K)_1 \subset_u C_0(V)^\times \subset C_0(\IR^n_{\geq 0})^\times$. 

(2) The argument is same as (1) with using Lemma~\ref{lem_cone_sph} (2).  
Choose $r > 0$ such that $K \subset_u B \equiv \{ x \in \IR^n \mid \| x \| \leq r \} \subset \IR^n$. 
Let $C := \IS^n_{\geq 0} \subset \IS^n$. 
Take a diffeomorphism $h : B \approx C$ and let $L = h(K)$. 
Since $h$ is a Lipschitz homeomorphism, by Lemma~\ref{lem_Lip} we have   
a Lipschitz homeomorphism of pairs, 
$$C(h) : (C_\kappa(B)^\times, C_\kappa(K)_1) \approx (C_\kappa(C)^\times, C_\kappa(L)_1).$$ 
By Lemma~\ref{lem_cone_sph} (2) $C_\kappa({\Bbb S}^n)$ : \LD. 
Since $L \subset_u C \subset {\Bbb S}^n$ and 
$C_\kappa(L)_1 \subset_u C_\kappa(C)^\times \subset C_\kappa({\Bbb S}^n)$, we have 
$C_\kappa(L)_1$ : (LD) in $C_\kappa({\Bbb S}^n)$ and  in $C_\kappa(C)^\times$.
Hence, by Proposition~\ref{prop_LD} (1) $C_\kappa(K)_1$ : (LD) in $C_\kappa(B)^\times$. 
Then,  $C_\kappa(K)_1$ : (LD) in $C_\kappa(\IR^n)^\times$ and in $C_\kappa(V)^\times$, since 
$C_\kappa(K)_1 \subset_u C_\kappa(B)^\times \subset C_\kappa(\IR^n)^\times$ 
and $C_\kappa(K)_1 \subset_u C_\kappa(V)^\times \subset C_\kappa(\IR^n)^\times$. 
\end{proof} 

\begin{proposition}\label{prop_k-cone_LD} Suppose $(N,d)$ is a compact Lipschitz metric manifold. 
Then, the $\kappa$-cone end $C_\kappa(N,d)_1$ has the property $\LD$ when 
{\rm (i)} $\kappa = 0$ or {\rm (ii)} $\kappa < 0$ and $\partial N = \emptyset$. 
\end{proposition}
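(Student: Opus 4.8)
The plan is to reduce the statement about the $\kappa$-cone end $C_\kappa(N,d)_1$ over an arbitrary compact Lipschitz metric manifold $(N,d)$ to the model cases $C_0(V)^\times$ and $C_\kappa(V)^\times$ (for $V$ an open subset of $\IR^n_{\geq 0}$ or $\IR^n$) already handled in Lemma~\ref{lem_Eucliedan}, and then to glue these local pieces together using the additivity of $\LD$ from Proposition~\ref{prop_LD}\,(3). The Lipschitz atlas on $N$ furnishes, for each point $p \in N$, a chart $\phi_p : U_p \to V_p$ that is a bi-Lipschitz homeomorphism onto an open subset of $\IR^n_{\geq 0}$. By compactness of $N$ I can extract a finite subcover $U_{p_1}, \dots, U_{p_m}$ and, shrinking slightly, compact sets $K_i \subset U_{p_i}$ whose interiors still cover $N$.

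The key step is to transfer the $\LD$ property across each chart at the level of cones. Since $\phi_{p_i}$ is bi-Lipschitz, so is its restriction to $(N,d_\pi)$ after truncating the metric, and hence by Lemma~\ref{lem_Lip} the $\kappa$-cone extension $C_\kappa(\phi_{p_i})$ is a bi-Lipschitz homeomorphism onto $C_\kappa(V_{p_i})^\times$, carrying $C_\kappa(K_i)_1$ onto a compact-over-the-link piece of the target. Lemma~\ref{lem_Eucliedan} then gives $C_\kappa(\phi_{p_i}(K_i))_1 : \LD$ inside $C_\kappa(V_{p_i})^\times$ (case (1) when $\kappa=0$, case (2) when $\kappa<0$ and $\partial N = \emptyset$ so the charts land in $\IR^n$), and Proposition~\ref{prop_LD}\,(1), the invariance of $\LD$ under uniform (here bi-Lipschitz) equivalence, pulls this back to
\[
C_\kappa(K_i)_1 : \LD \text{ in } C_\kappa(U_{p_i})^\times \text{ and hence in } C_\kappa(N,d)_+ .
\]
I would record carefully that $C_\kappa(U_{p_i})^\times$ is an open $n+1$-submanifold of $C_\kappa(N,d)_+$, so that Proposition~\ref{prop_LD}\,(2)(ii) applies to promote $\LD$ from the submanifold to the ambient cone.

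Having obtained $C_\kappa(K_i)_1 : \LD_{C_\kappa(N,d)_+}$ for each $i$, I would assemble a uniform neighborhood statement: because $\{\operatorname{Int}_N K_i\}$ covers the compact $N$, there is an $\e > 0$ with $C_\kappa(N)_1 = \bigcup_i C_\kappa(\operatorname{Int} K_i)_1$ and each $C_\kappa(K_i)_1$ containing a cone-uniform neighborhood of the corresponding piece of $C_\kappa(N)_1$; the product structure of the cone metric (Remark~\ref{rem_kappa-cone}) ensures that angular $\e$-neighborhoods on the link inflate to genuine $\subset_u$ neighborhoods on the cone end. Then Proposition~\ref{prop_LD}\,(3)(ii), applied to the finitely many sets $C_\kappa(K_i)_1$ each of which admits a uniform neighborhood satisfying $\LD$, yields that their union $C_\kappa(N)_1$ satisfies $\LD$ in $C_\kappa(N,d)_+$. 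Finally, since $C_\kappa(N,d)_1 - C_\kappa(N,d)_+$ involves only the compact region near the cone point (which is excluded from the end), one concludes $C_\kappa(N,d)_1 : \LD$ via Corollary~\ref{cor_LD}, treating the end as a closed $n+1$-submanifold with relatively compact complement.

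The main obstacle I expect is the bookkeeping of \emph{uniformity} of the neighborhoods at the level of cones rather than links. A bi-Lipschitz chart between links controls the truncated metric $d_\pi$ and, by Lemma~\ref{lem_Lip}, the cone metric up to a multiplicative constant; but converting a finite open cover of the compact link $N$ into the $\subset_u$ relations demanded by Proposition~\ref{prop_LD}\,(3) requires verifying that cone-radial directions do not destroy uniform discreteness or swallow the uniform gaps --- precisely the content of the identities in Remark~\ref{rem_kappa-cone}\,(2). Everything else (compactness, the chart transfer, invariance of $\LD$) is routine given the machinery already developed; the delicate point is making the passage from angular neighborhoods on $N$ to $\subset_u$ neighborhoods on $C_\kappa(N,d)_+$ rigorous using the explicit cone-metric formulas.
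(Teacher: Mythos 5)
Your proposal follows essentially the same route as the paper: bi-Lipschitz charts plus Lemma~\ref{lem_Lip} reduce to the model cases of Lemma~\ref{lem_Eucliedan}, Proposition~\ref{prop_LD}\,(1),(2) transfers $\LD$ to $C_\kappa(N)^\times$, Proposition~\ref{prop_LD}\,(3)(ii) glues the finitely many pieces, and a relative-compactness argument finishes. The one point you flag as delicate is handled in the paper by a second layer of shrinking ($C_x$ compact in ${\rm Int}_N D_x$) \emph{together with a radial shift}, so that the union consists of the sets $C_\kappa(C_{x_i})_2$ with uniform neighborhoods $C_\kappa(D_{x_i})_1$ (rather than $C_\kappa({\rm Int}\,K_i)_1 \subset_u C_\kappa(K_i)_1$, which fails for lack of an angular gap), after which Proposition~\ref{prop_LD}\,(4) upgrades $C_\kappa(N)_2$ to $C_\kappa(N)_1$.
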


\begin{proof} 
For each $x \in N$ there exists a Lipschitz homeomorphism $h_x : U_x \approx V_x$ between 
an open neighborhood $U_x$ of $x$ in $N$ and an open subset $V_x$ of $\IR^n_{\geq 0}$. 
(Note that $V_x \subset \IR^n_{>0}$ in the case (ii).) 
Take a compact neighborhood $D_x$ of $x$ in $V_x$ and let $E_x := h_x(D_x)$. 
By Lemma~\ref{lem_Lip} 
$h_x$ induces a Lipschitz homeomorphism of pairs 
$$C(h_x) : (C_\kappa(U_x)^\times, C_\kappa(D_x)_1) \approx (C_\kappa(V_x)^\times, C_\kappa(E_x)_1).$$ 

By Lemma~\ref{lem_Eucliedan} $C_\kappa(E_x)_1$ : (LD) in $C_\kappa(V_x)^\times$ 
and by Proposition~\ref{prop_LD} (1) we have 
$C_\kappa(D_x)_1$ : (LD) in $C_\kappa(U_x)^\times$. 
Since $C_\kappa(D_x)_1 \subset_u C_\kappa(U_x)^\times \subset C_\kappa(N)^\times$, 
it follows that $C_\kappa(D_x)_1$ : (LD) in $C_\kappa(N)^\times$. 

For each $x \in N$ take a compact neighborhood $C_x$ of $x$ in ${\rm Int}_N D_x$. 
Since $C_x \subset_u D_x$ in $N$, 
we have $C_\kappa(C_x)_2 \subset_u C_\kappa(D_x)_1$ in $C_\kappa(N)^\times$. 
Since $N$ is compact, there exist $x_1, \cdots, x_m \in N$ such that $N = \cup_{i=1}^m C_{x_i}$.
Then $C_\kappa(N)_2 = \cup_{i=1}^m C_\kappa(C_{x_i})_2$ and 
by Proposition~\ref{prop_LD} (3)(ii) $C_\kappa(N)_2$ : (LD) in $C_\kappa(N)^\times$. 
Since $C_\kappa(N)_2 \subset_u C_\kappa(N)_1 \subset C_\kappa(N)^\times$, it follows that 
$C_\kappa(N)_2$ : (LD) in $C_\kappa(N)_1$ 
and that $C_\kappa(N)_1$ : (LD) by Proposition~\ref{prop_LD} (4). 
\end{proof}

\section{End deformation property for uniform embeddings} 

In this section we introduce the notion of end deformation property for uniform embeddings (ED) in proper product ends and study its basic nature. 


\subsection{Definition and basic properties} \mbox{} 

\begin{defn}\label{def_proper product end} 
An $n$-dimensional proper product end is a metric $n$-manifold $(L,d)$ such that (i) the metric $d$ is proper and 
(ii) there exists a homeomorphism $\theta : S \times [1, \infty) \approx L$ for some compact $(n-1)$-manifold $S$. 
\end{defn}

Suppose $(L,d)$ is a proper product end. 
A subset $F$ of $L$ is said to be cofinal if $L - F$ is relatively compact in $L$. 
By ${\cal C \cal F}(L)$ we denote the collection of cofinal closed subsets of $L$. 
Note that ${\rm Fr}_L F$ is compact for any $F \in {\cal C \cal F}(L)$. 
We fix a homeomorphism $\theta : S \times [1, \infty) \approx L$ and 
let $L_I := \theta(S \times I)$ for $I \subset [1, \infty)$ and $L_r := L_{[r, \infty)}$ for $r \geq 1$. 
Note that (i) $F \subset L$ is cofinal if and only if $L_r \subset F$ for some $r \in [1, \infty)$ and 
(ii) $d(L_{\{ 1 \}}, L_r) \to \infty$ \ $(r \to \infty)$ since the metric $d$ is proper.  

\begin{defn}\label{def_(ED)} Suppose $(L, d)$ is a proper product end. 
We say that $(L, d)$ has the property (ED) and write $(L, d)$ :  (ED) if the following condition is satisfied: 
\begin{itemize} 
\item[$(\ast)$] For any $F \in {\cal C \cal F}(L)$ and any $\alpha > 0$ there exist $\beta > 0$, $H \in {\cal C \cal F}(L)$ with $H \subset F$ 
and an admissible deformation over $H$ (in the sense of Definition~\ref{def_adm_def})
$$\phi : \E^u_\ast(i_F, \alpha; F, (L, d)) \times [0,1] \lra \E^u_\ast(i_F, \beta; F, (L, d))$$
such that 
\begin{itemize}
\item[$(\ast)_1$] $\phi_t(f) = f$ \ on \ ${\rm Fr}_L F$ \ \ for each $(f,t) \in \E^u_\ast(i_F, \alpha; F, (L, d)) \times [0,1]$.
\end{itemize} 
\end{itemize}
\end{defn}

\begin{remark} 
In Definition~\ref{def_(ED)} the admissible deformation $\phi$ satisfies the following condition : 
\begin{itemize}
\item[] 
\begin{itemize}
\item[$(\ast)_2$] \ $\phi_t(f)(F) = f(F)$ \ \ for each $(f,t) \in \E^u_\ast(i_F, \alpha; F, (L, d)) \times [0,1]$.
\end{itemize} 
\end{itemize} 
\end{remark}

\begin{proof} 
Since ${\rm Int}_L F$ is an open subset of $L$, it follows that ${\rm Int}_L F$ is an $n$-manifold with 
$\partial \,{\rm Int}_L F = ({\rm Int}_L F) \cap \partial L$.
Let $U_\lambda$ $(\lambda \in \Lambda)$ denote the collection of connected components of ${\rm Int}_L F$, 
which forms a disjoint open covering of ${\rm Int}_L F$. 
Since $d(f, i_F) < \infty$ and $f$ is a proper embedding, it follows that 
$f(F)$ is a closed subset of $L$, 
${\rm Fr}_L f(F) = f({\rm Fr}_L F)$, ${\rm Int}_L f(F) = f({\rm Int}_L F)$ and 
$f(U_\lambda)$ $(\lambda \in \Lambda)$ is the collection of connected components of ${\rm Int}_L f(F)$, 
which forms a disjoint open covering of ${\rm Int}_L f(F)$. 
Since $L - f({\rm Fr}_L F)$ is the disjoint union of open subsets $L - f(F)$ and $f(U_\lambda)$ $(\lambda \in \Lambda)$, 
each $f(U_\lambda)$ is an open and closed connected subset of  $L - f({\rm Fr}_L F)$. 

We can apply the same argument to $\phi_t(f)$ $(t \in [0,1])$. 
From the condition $(\ast)_1$ it follows that 
$\phi_t(f)(U_\lambda)$ $(\lambda \in \Lambda)$ is a disjoint family of open and closed connected subsets of $L - f({\rm Fr}_L F)$. 
For each $\lambda \in \Lambda$ take any point $x_\lambda \in U_\lambda$. Then 
$\phi_t(f)(x_\lambda)$ $(t \in [0,1])$ is a path in $L - f({\rm Fr}_L F)$ which meets $f(U_\lambda)$. 
Hence, for each $t \in [0, 1]$, we have $\phi_t(f)(x_\lambda) \in f(U_\lambda)$ and 
$\phi_t(f)(U_\lambda) = f(U_\lambda)$ $(t \in [0,1])$.  
This implies $\phi_t(f)(F) = f(F)$ $(t \in [0,1])$. 
\end{proof} 

\begin{lemma} 
Suppose $(L,d)$, $(L',d')$ are proper product ends and $h : (L,d) \approx (L',d')$ is a uniform, coarsely uniform homeomorphism. 
If $(L,d) : \ED$, then so is $(L',d')$. 
\end{lemma}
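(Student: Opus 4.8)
The plan is to transport the entire (ED) datum from $(L,d)$ to $(L',d')$ by conjugating with $h$: uniform continuity of $h,h^{-1}$ will give continuity of the conjugation, while coarse uniformity of $h$ and $h^{-1}$ will let me keep the finite radii $\alpha$ and $\beta$ under control.

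First I would fix $F' \in {\cal C \cal F}(L')$ and $\alpha' > 0$ and set $F := h^{-1}(F')$. Since $h$ is a homeomorphism of manifolds, $F$ is closed, $L - F = h^{-1}(L' - F')$ is relatively compact (its closure is the $h^{-1}$-image of the compact closure of $L' - F'$), one has $h(\partial L) = \partial L'$, and ${\rm Fr}_L F = h^{-1}({\rm Fr}_{L'} F')$; thus $F \in {\cal C \cal F}(L)$. As in Lemma~\ref{lem_LD_1}(3), conjugation $c_h(f) := hfh^{-1}$ is a homeomorphism $\E^u_\ast(F, L) \cong \E^u_\ast(F', L')$ with inverse $c_{h^{-1}}(f') = h^{-1}f'h$; because $h$ respects boundaries, $c_{h^{-1}}$ sends $i_{F'}$ to $i_F$, sends proper embeddings to proper embeddings, and preserves the condition of being the identity on $F \cap \partial L$.

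Next I would convert the radius. For $f' \in \E^u_\ast(i_{F'}, \alpha'; F', (L',d'))$ and $x \in F$, writing $y = h(x) \in F'$ gives $d(c_{h^{-1}}(f')(x), x) = d(h^{-1}(f'(y)), h^{-1}(y))$ with $d'(f'(y), y) < \alpha'$; coarse uniformity of $h^{-1}$ then furnishes $S > 0$ with $d(c_{h^{-1}}(f'), i_F) \le S$, so with $\alpha := S+1$ the map $c_{h^{-1}}$ carries $\E^u_\ast(i_{F'}, \alpha'; F', (L',d'))$ into $\E^u_\ast(i_F, \alpha; F, (L,d))$. Now I invoke the hypothesis $(L,d) : \ED$ for this $F$ and $\alpha$ to obtain $\beta > 0$, $H \in {\cal C \cal F}(L)$ with $H \subset F$, and an admissible deformation over $H$, $\phi : \E^u_\ast(i_F, \alpha; F, (L,d)) \times [0,1] \to \E^u_\ast(i_F, \beta; F, (L,d))$, fixing ${\rm Fr}_L F$. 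I set $H' := h(H)$ and define $\phi'_t(f') := c_h(\phi_t(c_{h^{-1}}(f')))$ on $\E^u_\ast(i_{F'}, \alpha'; F', (L',d')) \times [0,1]$.

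Finally I would check that $\phi'$ is the deformation required by Definition~\ref{def_(ED)} for $(L',d')$. Continuity of $\phi'$ follows from that of $\phi$, $c_h$ and $c_{h^{-1}}$, and $H' \subset F'$ is cofinal and closed by the argument already used for $F$. The conjugation identities yield $\phi'_0(f') = f'$, $\phi'_t(i_{F'}) = i_{F'}$, and $\phi'_1(f') = \id$ on $H'$; the boundary clause propagates through $h(\partial L) = \partial L'$, and $\phi'_t(f') = f'$ on ${\rm Fr}_{L'} F' = h({\rm Fr}_L F)$ follows from the corresponding property of $\phi$. The output radius is produced exactly as $\alpha$ was: for $y \in F'$ with $x = h^{-1}(y)$ one has $d'(\phi'_t(f')(y), y) = d'(h(\phi_t(c_{h^{-1}}(f'))(x)), h(x))$ with $d(\phi_t(c_{h^{-1}}(f'))(x), x) < \beta$, so coarse uniformity of $h$ gives a uniform bound and hence a $\beta'$. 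I expect the only genuine subtlety—and the precise reason the hypothesis demands a coarsely uniform, not merely uniform, homeomorphism—to be these two radius conversions: uniform continuity controls only ``small-to-small'', whereas here the fixed finite radii $\alpha'$ and $\beta$ must be turned into fixed finite radii $\alpha$ and $\beta'$, which is exactly the content of coarse uniformity of $h^{-1}$ and of $h$. Everything else is the routine naturality of conjugation.
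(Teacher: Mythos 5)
Your proposal is correct and follows essentially the same route as the paper: conjugation by $h$ (the paper's $\eta(f) = h^{-1}f(h|_F)$ is exactly your $c_{h^{-1}}$), pulling back $F'$ to $F = h^{-1}(F')$, using coarse uniformity of $h^{-1}$ to convert $\alpha'$ into a finite input radius $\alpha$, applying (ED) on $(L,d)$, and using coarse uniformity of $h$ to convert $\beta$ into the output radius $\beta'$. Your closing remark correctly identifies the two radius conversions as the precise place where coarse uniformity (rather than mere uniform continuity) is needed, which is exactly how the paper deploys the hypothesis.
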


\begin{proof} 
Given any $F' \in {\cal C \cal F}(L')$ and $\alpha' > 0$. 
Since $h^{-1}$ is coarsely uniform, there exist $\alpha > \alpha_0 > 0$ such that 
if $x',y' \in L'$ and $d'(x',y') < \alpha'$, then $d(h^{-1}(x'), h^{-1}(y')) < \alpha_0$. 
Since $(L,d) : \ED$, for $F := h^{-1}(F') \in {\cal C \cal F}(L)$ and $\alpha$ 
there exist $\beta > 0$, $H \in {\cal C \cal F}(L)$ with $H \subset F$ 
and an admissible deformation 
$$\phi : \E^u_\ast(i_F, \alpha; F, (L, d)) \times [0,1] \lra \E^u_\ast(i_F, \beta; F, (L, d))$$
over $H$ such that \ $\phi_t(f) = f$ \ on ${\rm Fr}_L F$ \ for each $(f,t) \in \E^u_\ast(i_F, \alpha; F, L) \times [0,1]$. 

Since $h$ is coarsely uniform, there exist $\beta' > \beta'_0 > 0$ such that 
if $x,y \in L$ and $d(x,y) < \beta$, then $d'(h(x),h(y)) < \beta'_0$. 
Let $H' := h(H) \in {\cal C \cal F}(L')$. 

Consider the homeomorphism 
$$\eta : \E^u_\ast(F', L') \approx \E^u_\ast(F, L) : \hspace{2mm} \eta(f) = h^{-1}f (h|_F).$$ 

Since \ \ 
$\eta(\E^u_\ast(i_{F'}, \alpha'; F', L')) \subset  \E^u_\ast(i_F, \alpha; F, L)$ \ and \ 
$\eta^{-1}(\E^u_\ast(i_F, \beta; F, L)) \subset \E^u_\ast(i_{F'}, \beta'; F', L')$, \\ 
the required admissible deformation over $H'$ is defined by 
$$\psi_t = \eta^{-1} \phi_t \eta : \E^u_\ast(i_{F'}, \alpha'; F', (L', d')) \lra \E^u_\ast(i_{F'}, \beta'; F', (L', d')) \hspace{2mm} (t \in [0,1]).$$ 
\vskip -8mm 
\end{proof} 


\begin{example}\label{exp_ED=>ED}
Suppose $(N, d)$ is a compact metric manifold. If $C_0(N, d)_1 : \LD$, then $C_0(N, d)_1 : \ED$. 
In particular, the 0-cone end $C_0(N, d)_1$ over any compact Lipschitz metric manifold $(N, d)$ has the property $\ED$. 
\end{example}

\begin{proof} 
Let $L_r = C_0(N)_r$ $(r \geq 0)$. 
Since $L_1$ : $\LD$, 
for the admissible tuple $(L_4, L_3, L_2, \emptyset, \emptyset) \in {\cal S}(L_1)$ 
there exist $\e \in (0,1)$ and an admissible deformation over $L_4$,   
$$\chi : \E^u_\ast(i_{L_2}, \e; L_2, L_1) \times [0,1] \lra \E^u_\ast(i_{L_2}, 1; L_2, L_1).$$

Given any $F \in {\cal C \cal F}(L_1)$ and $\alpha > 0$, take $\beta \geq 1$ such that $\e \beta > \alpha$ and $L_\beta \subset F$ and 
let $H = L_{4\beta}$.
The $\beta$-similarity transformation 
$$k_\beta : C_0(N, d) \to C_0(N, d), \ \ k_\beta(tx) = (\beta t)x$$ 
induces the $\beta$-similarity transformation 
$$\eta : \E^u_\ast(i_{L_2}, 1; L_2, L_1) \approx \E^u_\ast(i_{L_{2\beta}}, \beta; L_{2\beta}, L_\beta): \ \eta(f) =  k_\beta f \, k_{1/\beta}.$$ 

Since $\eta^{-1}(\E^u_\ast(i_{L_{2\beta}}, \alpha; L_{2\beta}, L_\beta)) \subset \E^u_\ast(i_{L_2}, \e; L_2, L_1)$,  
we obtain an admissible deformation over $H$, 
$$\psi_t = \eta \chi_t \eta^{-1} : \E^u_\ast(i_{L_{2\beta}}, \alpha; L_{2\beta}, L_\beta) \lra 
\E^u_\ast(i_{L_{2\beta}}, \beta; L_{2\beta}, L_\beta) \ \ (t \in [0,1]),$$
such that for each $(f, t) \in \E^u_\ast(i_{L_{2\beta}}, \alpha; L_{2\beta}, L_\beta) \times [0,1]$ 
\begin{itemize}  
\item[] (i) $\psi_t(f) = f$ \ on \ $L_{2\beta} - L_{3\beta}$  \ and \ (ii) $\psi_t(f)(L_{2\beta}) = f(L_{2\beta})$. 
\end{itemize}  

Finally the required admissible deformation over $H$ is defined by 
$$\phi : \E^u_\ast(i_F, \alpha; F, L_1) \times [0,1] \lra \E^u_\ast(i_F, \beta; F, L_1) : \hspace{5mm} 
\phi_t(f) = 
\left\{ \hspace{-1mm}  
\begin{array}[c]{ll} 
\psi_t(f|_{L_{2\beta}}) & \text{on} \ L_{2\beta}, \\[2mm] 
\ f & \text{on} \ F - L_{2\beta}. 
\end{array}\right.$$ 
\vskip -6mm 
\end{proof}


\subsection{End deformation theorem for uniform embeddings} \mbox{} 

Suppose $(M, d)$ is a metric $n$-manifold.
A proper product end of $(M, d)$ is a closed subset $L$ of $M$ such that ${\rm Fr}_M L$ is compact and $(L, d|_L)$ is a proper product end. 
Suppose $L$ is a proper product end of $(M, d)$. 
We say that the end $L$ is isolated if for any $c > 0$ there exists $F \in {\cal C \cal F}(L)$ with $d(M - L, F) > c$. 
This condition is equivalent to the condition that $d(M - L, L_r) \to \infty$ as $r \to \infty$. 
Let ${\cal C \cal F}(M, L)$ denote the collection of closed subsets $F$ of $M$ such that $F \cap L \in {\cal C \cal F}(L)$. 
The next lemma is a refinement of \cite[Lemma 2.6]{Ya}. 

\begin{lemma}\label{lem_p-isotopy} Suppose $(M, d)$ is a metric manifold and $N \subset F \subset M$. 
Assume that there exists a compact manifold $S$ and 
a homeomorphism $\theta : S \times [0,3] \approx N$ such that $\theta(S \times (0,3))$ is an open subset of $M$. 
Let $N_I :=\theta(S \times I)$ for $I \subset [0,3]$ and $N_a := N_{[0,a]}$ for $a \in [0,3]$.  
Then, there exists a strong deformation retraction 
$$\mbox{$\phi$ \ \ of \ \  $\E_\ast^u(F, M; N_1)$ \ \  onto \ \ $\E_\ast^u(F, M; N_2)$}$$
such that for each $(f, t) \in \E_\ast^u(F, M; N_1) \times [0,1]$ 
\begin{itemize}
\item[(i)\ ] $\phi_t(f) = f$ \ on $f^{-1}(M - N_{(0,3)}) - N_{(0,3)}$, 
\item[(ii)\,] $d(\phi_t(f), i_F) \leq d(f, i_F) + {\rm diam}\,N$, 
\item[(iii)] if $f = \id$ on $F \cap \partial M$, then $\phi_t(f) = \id$ on $F \cap \partial M$ \ and   
\item[(iv)] if $N \subset f(F)$, then $\phi_t(f)(F) = f(F)$. 
\end{itemize}
\end{lemma}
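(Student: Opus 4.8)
The plan is to push the whole problem into the product model via $\theta$, so that only the collar coordinate $u\in[0,3]$ matters, and then to enlarge the locus on which $f$ is the identity from $N_1$ to $N_2$ by a \emph{bounded collar squeeze} that takes place entirely inside the compact set $N$. The reason for insisting on the finite collar $S\times[1,3]$, rather than an Alexander-type contraction of $f$ toward the level $S\times\{1\}$ on which it is already the identity, is that such a contraction is not continuous in the uniform topology (cf.\ the Appendix); confining all the motion to $N$ is what keeps every map uniformly continuous and bounds the displacement by $\diam N$, which is precisely what (ii) demands.

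Concretely, I would fix an isotopy $\{r_t\}_{t\in[0,1]}$ of homeomorphisms of $[0,3]$ with $r_0=\id$, each $r_t$ fixing the endpoints $0,3$ and satisfying $r_t([0,1])\subseteq[0,1]$, and with $r_1([0,2])=[0,1]$. This induces an ambient isotopy $R_t$ of $M$, supported in the open collar $N_{(0,3)}=\theta(S\times(0,3))$, by $R_t(\theta(s,u))=\theta(s,r_t(u))$ and $R_t=\id$ off $N_{(0,3)}$; since $\theta(S\times(0,3))$ is open, $R_t$ is a homeomorphism of $M$ carrying $F$ onto $F$ and respecting $\partial M$. The natural candidate is the conjugation $\phi_t(f)=R_t^{-1}\,f\,R_t$. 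One checks directly that $\phi_0(f)=f$ and $\phi_t(i_F)=i_F$; that $\phi_t(f)=\id$ on $N_1$ (because $R_t$ preserves $N_1$ and $f=\id$ there), so $\phi_t$ does land in $\E^u_\ast(F,M;N_1)$; and that $\phi_1(f)=\id$ on $N_2$, since $R_1(N_2)=N_1$ is fixed by $f$. Condition (i) holds because $R_t$ is supported in $N_{(0,3)}$; condition (iii) because $R_t$ preserves $\partial M$; and condition (iv) because, when $N\subset f(F)$, the conjugating homeomorphism is supported in $N\subset f(F)$ and hence preserves $f(F)$.

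The one property this plain conjugation fails, and the main obstacle, is the \emph{strong} retraction condition. If $f=\id$ on $N_2$ but $f\neq\id$ somewhere on $N_{(2,3)}$, then $R_t$ — which must stretch $[2,3]$ onto $[1,3]$ in order to compress $[0,2]$ into $[0,1]$ — still stirs those levels, so $\phi_t(f)\neq f$ there. Reconciling ``spreading the identity across the collar'' with ``fixing every embedding that already fixes $N_2$'' is the crux, and it forces the squeeze to depend on $f$. The remedy I would adopt is to damp the reparametrization by an $f$-adapted cutoff that vanishes on the part of the collar where $f$ already agrees with the inclusion, so that the induced isotopy is stationary there; using $f(N_2)=N_2$ and the injectivity of $f$ one then verifies $\phi_t(f)=f$ for all $f\in\E^u_\ast(F,M;N_2)$ and all $t$, while the endpoint value $\phi_1(f)$ is unchanged because $f=\id$ on $N_1$ in any case. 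The displacement estimate (ii) has to be re-examined for this $f$-dependent version, but it again follows from the support lying inside $N$.

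It remains to verify the regularity. Each $\phi_t(f)$ is a proper uniform embedding: properness and uniform continuity are inherited from $f$ and $R_t^{\pm 1}$ through the continuity of composition of uniformly continuous maps, together with the fact that the modification is confined to the compact collar. For the inverse, I would factor $\phi_t(f)^{-1}=\bigl(\phi_t(f)^{-1}f\bigr)f^{-1}$ and apply Lemma~\ref{lemma_conti} to obtain uniform continuity of $\phi_t(f)^{-1}$ and its continuous dependence on $(f,t)$, exactly as in the proof of Lemma~\ref{lem_LD_1}. Continuity of $\phi$ on $\E^u_\ast(F,M;N_1)$ in the uniform topology then follows from continuity of composition, completing the construction.
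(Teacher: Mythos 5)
Your overall skeleton --- conjugating $f$ by an ambient collar squeeze supported in $N_{(0,3)}$, and recognizing that the plain conjugation fails only the \emph{strong} retraction condition, so that the squeeze must be made to depend on $f$ --- is exactly the strategy of the paper's proof. The gap is in the remedy you propose for that crux. A cutoff ``that vanishes on the part of the collar where $f$ already agrees with the inclusion, so that the induced isotopy is stationary there'' fails on three counts. First, for a general $f$ the damped squeeze no longer achieves the endpoint condition: if $f=\id$ on some intermediate level $\theta(S\times\{c\})$ with $1<c<2$ but not elsewhere on $\theta(S\times(1,2])$, freezing that level blocks the compression of $[0,2]$ into $[0,1]$, so $\phi_1(f)$ need not be the identity on $N_2$. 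Second, for $f\in\E^u_\ast(F,M;N_2)$ the damped isotopy is then supported in $\theta(S\times(2,3))$, where $f$ need not be the identity; conjugation by a homeomorphism supported there does not commute with $f$, so $\phi_t(f)=f$ is not actually verified. Third, the set $\{x\mid f(x)=x\}$ does not vary continuously with $f$ in the uniform topology, so the continuity of $(f,t)\mapsto\phi_t(f)$ --- which is the real technical content of the lemma --- is not addressed; ``continuity of composition'' does not suffice, because the construction involves the inverse of an $f$-dependent reparametrization that degenerates as $f$ approaches the subspace $\E^u_\ast(F,M;N_2)$.

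The paper resolves the crux differently. It builds a two-parameter family $\xi_t(s)\in{\cal C}^u(M,M)$ with $\xi_t(s)(N_1)=N_{1+t}$ for \emph{all} $s$ (so the endpoint condition is never compromised), but with support shrinking to $\theta(S\times(0,2+s))$; each $\xi_t(s)$ is a homeomorphism except at $(s,t)=(0,1)$, where it collapses $\theta(S\times[1,2])$. It then chooses a scalar Urysohn-type function $\alpha:\E^u_\ast(F,M;N_1)\to[0,1]$ with $\alpha^{-1}(0)=\E^u_\ast(F,M;N_2)$ and sets $\phi_t(f)=\xi_t(\alpha(f))\,f\,(\xi_t(\alpha(f))|_F)^{-1}$ off that subspace and $\phi_t(f)=f$ on it. The two formulas are reconciled because $\xi_t(0)$ is supported in $N_2$ and preserves $N_2$, while $f=\id$ on $N_2$ and (by injectivity) $f(F-N_2)\cap N_2=\emptyset$, giving $f\,\xi_t(0)|_F=\xi_t(0)\,f$; and continuity across the subspace is obtained not from composition and inversion but from Lemma~\ref{lemma_conti} applied to the identity $\phi_t(f)\,\zeta(f,t)=\eta(f,t)$, where $\zeta(f,t)=\xi_t(\alpha(f))|_F$ is surjective and $\eta(f,t)=\xi_t(\alpha(f))f$ is continuous. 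If you replace your pointwise damping by this scalar support-control, the rest of your argument (conditions (i)--(iv) from the support lying in $N_{(0,3)}$, and the displacement bound from the support lying in $N$) goes through.
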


\begin{proof} 
One can construct a map $\xi : [0, 1] \times [0,1] \lra {\cal C}^u(M, M)$ which satisfies the following conditions (cf. \cite[Proof of Lemma 2.6]{Ya}): 
\begin{itemize}
\item[(a)] $\xi_0(s) = \id_M$, \hspace{5mm} 
\item[(b)] $\xi_t(s)(N) = N$, \ $\xi_t(s)(N_1) = N_{1+t}$ \ and \\
\hspace*{5mm} 
$\xi_t(s) = \id$ \ on \ $(M - N) \cup \theta(S \times (\{ 0 \} \cup [2+s, 3]))$ \hspace{5mm} ($(s,t) \in [0, 1] \times [0,1]$), 
\item[(c)] $\xi_t(s) \in {\cal H}^u(M)$ \ \ ($(s,t) \in [0,1] \times [0,1] - \{ (0,1) \}$). 
\end{itemize} 
 Choose a map $\alpha : \E_\ast^u(F, M; N_1) \to [0,1]$ with $\alpha^{-1}(0) = \E_\ast^u(F, M; N_2)$ and 
define a homotopy 
$$\begin{array}[t]{l}
\phi : \E_\ast^u(F, M; N_1) \times [0,1] \lra \E_\ast^u(F, M; N_1) \hspace{10mm} \text{by} \\[3mm] 
\hspace{10mm} 
\phi_t(f) = 
\left\{ \begin{array}[c]{@{\,}ll}
\xi_t(\alpha(f)) \, f \,(\xi_t(\alpha(f))|_F)^{-1} & (f \in \E_\ast^u(F, M; N_1) - \E_\ast^u(F, M; N_2),  \\[2mm] 
\ f & (f \in \E_\ast^u(F, M; N_2). 
\end{array} \right.
\end{array}$$

\vskip 1mm 
\noindent If $f \in \E_\ast^u(F, M; N_1) - \E_\ast^u(F, M; N_2)$, then 
$\xi_t(\alpha(f))^{-1}(N_1) \subset N_1$ and $f = \id$ on $N_1$, so that $\phi_t(f) = \id$ on $N_1$. 
The continuity of $\phi$ follows from Lemma~\ref{lemma_conti} 
applied to $P = \E_\ast^u(F, M; N_1) \times [0,1]$ and the maps 
$$\mbox{$\zeta : P \to {\cal C}(F, F)$ : \ $\zeta(f,t) = \xi_t(\alpha(f))|_F$ \ \ and \ \ 
$\eta : P \to {\cal C}(F, M)$ : \ $\eta(f, t) = \xi_t(\alpha(f)) f$.}$$ 
In fact, since $\xi_t(0)(N_2) = N_2$ and $\xi_t(0) = \id$ on $M - N_2$ by (b), it follows that 
$f \,\xi_t(0)|_F = \xi_t(0) f$ $((f,t) \in \E_\ast^u(F, M; N_2) \times [0,1])$ and so  
$$\phi_t(f) \, \zeta(f,t) = \eta(f,t) \hspace{5mm} ((f,t) \in P).$$ 

For each $f \in \E_\ast^u(F, M; N_1) - \E_\ast^u(F, M; N_2)$ we have 
$\xi_1(\alpha(f))^{-1}(N_2) = N_1$, so $\phi_1(f) = \id$ on $N_2$. 
These observations imply that $\phi$ is a strong deformation retraction of $\E_\ast^u(F, M; N_1)$ onto $\E_\ast^u(F, M; N_2)$.
Finally, the additional properties (i), (ii) and (iv) follow from the condition (b) $\xi_t(s) = \id$ on $M - N_{(0,3)}$.
This completes the proof. 
\end{proof}

\begin{lemma}\label{lem_ED_end} Suppose $(M, d)$ is a metric manifold and $L$ is an isolated proper product end of $(M,d)$. 
If $(L, d|_L) : \ED$, then the following holds: 
\begin{itemize}
\item[$(\#)$] For any $F \in {\cal C \cal F}(M, L)$, any $s > r > 1$ with $L_r \subset {\rm Int}_M (F \cap L)$ and any $\alpha > 0$ 
there exist $\beta > 0$ and an admissible deformation over $L_s$, \\[1mm] 
\hspace*{30mm} $\phi : \E^u_\ast(i_F, \alpha; F, (M, d)) \times [0,1] \lra \E^u_\ast(i_F, \beta; F, (M, d))$ \\[1mm] 
 such that for each $(f, t) \in \E^u_\ast(i_F, \alpha; F, (M, d)) \times [0,1]$ 
\begin{itemize}
\item[(i)\ ] $\phi_t(f) = f$ \ on $f^{-1}(M - {\rm Int}_M L_r) - {\rm Int}_M L_r \subset F$,  
\item[(ii)\,] if $L_r \subset f(F)$, then  $\phi_t(f)(F) = f(F)$ \ \ and \ \ {\rm (iii)} if $f = \id$ on $L_s$, then $\phi_t(f) = f$. 
\end{itemize}
\end{itemize}
\end{lemma}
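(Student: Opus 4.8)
The plan is to import the intrinsic end deformation furnished by $(L,d|_L):\ED$ into the ambient manifold: cut each $f$ along a deep level set, deform only the resulting end-part by $(\ED)$ while leaving $f$ untouched further out, and then push the resulting identity region outward to the prescribed level $s$ by a single application of Lemma~\ref{lem_p-isotopy}.

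First I would use the isolation hypothesis to fix the cutting level. Since $d(M-L,L_\rho)\to\infty$ as $\rho\to\infty$, choose $\rho>s$ with $d(M-L,L_\rho)>\alpha$. Then for every $f\in\E^u_\ast(i_F,\alpha;F,(M,d))$ and every $x\in L_\rho\subset F$ one has $d(f(x),M-L)\ge d(x,M-L)-d(f,i_F)>0$, so $f(L_\rho)\subset L$ and the restriction $g:=f|_{L_\rho}$ lies in $\E^u_\ast(i_{L_\rho},\alpha;L_\rho,(L,d|_L))$. Applying $\ED$ to the cofinal set $L_\rho$ and to $\alpha$ yields $\beta_1\ge\alpha$, a cofinal $H\subset L_\rho$ with $L_{\rho'}\subset H$ for some $\rho'\ge\rho>s$, and an admissible deformation $\phi'$ over $H$ with $\phi'_t(g)=g$ on ${\rm Fr}_L L_\rho=L_{\{\rho\}}$ and, by the remark following Definition~\ref{def_(ED)}, $\phi'_t(g)(L_\rho)=g(L_\rho)$. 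I then glue, setting $\Psi_t(f):=\phi'_t(f|_{L_\rho})$ on $L_\rho$ and $\Psi_t(f):=f$ on $F-{\rm Int}_M L_\rho$; the two formulas agree on $L_{\{\rho\}}$, so $\Psi_t(f)$ is a well-defined map $F\to M$ that is the identity on $L_{\rho'}$, equals $f$ off ${\rm Int}_M L_\rho$, and satisfies $\Psi_t(f)(F)=f(F)$.

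Second I would extend the identity region from $L_{\rho'}$ out to $L_s$. Choosing $\eta>0$ with $s-\eta>r$, apply Lemma~\ref{lem_p-isotopy} to the collar $N:=L_{[s-\eta,\,\rho'+1]}$, reparametrised by an orientation-reversing homeomorphism $\theta_N:S\times[0,3]\approx N$ with $N_1=L_{[\rho',\,\rho'+1]}$ and $N_2=L_{[s,\,\rho'+1]}$; then $N_{(0,3)}=L_{(s-\eta,\,\rho'+1)}$ is open in $M$ and contained in ${\rm Int}_M L_r$. Since $\Psi_1(f)=\id$ on $L_{\rho'}\supset N_1$, Lemma~\ref{lem_p-isotopy} supplies a strong deformation retraction $\chi$ carrying $\Psi_1(f)$ to a map that is the identity on $N_2$, hence on all of $L_s=N_2\cup L_{\rho'+1}$, while fixing $f$ at points that, together with their images, avoid $N_{(0,3)}$. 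Concatenating, $\phi_t(f):=\Psi_{2t}(f)$ for $t\le 1/2$ and $\phi_t(f):=\chi_{2t-1}(\Psi_1(f))$ for $t\ge 1/2$ is the desired admissible deformation over $L_s$, with $\beta:=\beta_1+\diam N$. The properties (i)--(iii) are then bookkeeping: (i) holds since both stages fix $f$ at points lying, with their $f$-images, outside ${\rm Int}_M L_r\supset{\rm Int}_M L_\rho\cup N_{(0,3)}$; (ii) follows from $\Psi_t(f)(F)=f(F)$ together with Lemma~\ref{lem_p-isotopy}(iv) and $N\subset L_r\subset f(F)$; and (iii) holds because $f=\id$ on $L_s$ forces $f|_{L_\rho}=i_{L_\rho}$, whence $\Psi_t(f)=f$ by $\phi'_t(i_{L_\rho})=i_{L_\rho}$, after which $\chi$ fixes $f\in\E^u_\ast(F,M;N_2)$. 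The boundary clause of admissibility propagates through both stages as in Lemma~\ref{lem_LD_1}.

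The main obstacle is the first stage: verifying that the glued family $\Psi_t(f)$ consists of proper uniform embeddings of $F$ into $M$, continuously in $(f,t)$. The delicate point is uniform continuity across the cut at level $\rho$, for both $\Psi_t(f)$ and its inverse, since the two sides coincide only on the compact seam $L_{\{\rho\}}$ and not on a uniform collar. I would settle this by a compactness argument rather than a bare $\subset_u$ estimate: each slab $L_{[a,b]}=\theta(S\times[a,b])$ is compact, so any pair of points straddling level $\rho$ with vanishing distance subconverges to a common point of $L_{\{\rho\}}$, where the two defining formulas agree by $\phi'_t(g)=g$; hence the one-sided uniform continuities of $\phi'_t(f|_{L_\rho})$ and of $f$ splice to uniform continuity of $\Psi_t(f)$, and symmetrically of $\Psi_t(f)^{-1}$ across the compact image seam $f(L_{\{\rho\}})$. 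Injectivity and the image identity $\Psi_t(f)(F)=f(F)$ come from the image-preservation $\phi'_t(g)(L_\rho)=g(L_\rho)$, and continuity of the selection $(f,t)\mapsto\Psi_t(f)$ is obtained from Lemma~\ref{lemma_conti} exactly as in the proof of Lemma~\ref{lem_p-isotopy}.
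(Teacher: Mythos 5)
Your proposal is correct and follows essentially the same route as the paper's proof: use isolation to choose a level $u>s$ with $O_\alpha(L_u,M)\subset L$, apply the intrinsic $\ED$ deformation to $f|_{L_u}$ and glue it to $f$ along the compact seam, then push the resulting identity region out to $L_s$ with Lemma~\ref{lem_p-isotopy} on a collar inside $L_r$, and concatenate. The only difference is expository: you spell out the uniform-continuity splice across the compact seam (which the paper leaves implicit), and your collar $L_{[s-\eta,\rho'+1]}$ plays the role of the paper's $L_{[r,w]}$.
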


\begin{proof} 
Since $L$ is isolated, we can find $u > s$ such that $O_\alpha(L_{u}, M) \subset L$. 
Since $(L, d|_L)$ : (ED), for $L_{u}$ and $\alpha$ we can find $\gamma > \alpha$, $v > u$ and an admissible deformation over $L_v$ \\[1mm] 
\hspace*{40mm} $\chi : \E^u_\ast(i_{L_{u}}, \alpha; L_{u}, (L, d|_L)) \times [0, 1] \lra \E^u_\ast(i_{L_{u}}, \gamma; L_{u}, (L, d|_L))$  \\[1mm] 
such that for each $(f,t) \in \E^u_\ast(i_{L_{u}}, \alpha; L_{u}, L) \times [0,1]$ 
\begin{itemize}
\item[] (i) $\chi_t(f) = f$ \ on ${\rm Fr}_L L_{u}$ \ \ and \ \ (ii) $\chi_t(f)(L_{u}) = f(L_{u})$. 
\end{itemize}
Define an admissible deformation over $L_v$,  
$$\psi : \E^u_\ast(i_F, \alpha; F, M) \times [0,1] \lra \E^u_\ast(i_F, \gamma; F, M) \ \ 
\text{by} \ \ 
\psi_t(f) = 
\left\{ \begin{array}[c]{ll}
\chi_t(f|_{L_{u}}) & \text{on} \ L_{u}, \\[2mm]
f & \text{on} \ F - {\rm Int}_M L_{u},  
\end{array} \right.$$
which satisfies the following conditions: \hspace{10mm} For each $(f,t) \in \E^u_\ast(i_F, \alpha; F, M) \times [0,1]$
\begin{itemize}
\item[(i)] $\psi_t(f) = f$ \ on $F - {\rm Int}_M L_{u}$, \ \ (ii) $\psi_t(f)(F) = f(F)$, \ \ (iii)  if $f = \id$ \ on $L_{u}$, then $\psi_t(f) = f$. 
\end{itemize}

Choose any $w > v$ and let $\beta := \gamma + {\rm diam}\,L_{[r, w]}$. 
We apply Lemma~\ref{lem_p-isotopy} to $L_{[r, w]} \subset F$ to obtain a strong deformation retraction 
$$\mbox{$\eta$ \ \ of \ \  $\E_\ast^u(F, M; L_{[v, w]})$ \ \  onto \ \ $\E_\ast^u(F, M; L_{[s, w]})$}$$
such that for each $(f, t) \in \E_\ast^u(F, M; L_{[v, w]}) \times [0,1]$ 
\begin{itemize}
\item[(i)\ ] if $f = \id$ on $F \cap \partial M$, then $\eta_t(f) = \id$ on $F \cap \partial M$,  
\item[(ii)\,] $d(\eta_t(f), i_F) \leq d(f, i_F) + {\rm diam}\,L_{[r, w]}$, 
 \item[(iii)] $\eta_t(f) = f$ \ on $f^{-1}(M - L_{(r, w)}) - L_{(r, w)}$ \ \ and  
\item[(iv)] if $L_{[r, w]} \subset f(F)$, then $\eta_t(f)(F) = f(F)$. 
\end{itemize}

The deformation $\eta$ restricts to an admissible deformation over $L_s$, \\[1mm] 
\hspace*{40mm} $\eta : \E^u_\ast(i_F, \gamma; F, M; L_v) \times [0,1] \lra \E^u_\ast(i_F, \beta; F, M;L_v)$ \\[1mm]
such that for each $(f,t) \in \E^u_\ast(i_F, \gamma; F, M; L_v)\times [0,1]$ 
\begin{itemize}
\item[(i)\ ] $\eta_t(f) = f$ \ on \ $f^{-1}(M - {\rm Int}_M L_r) - {\rm Int}_M L_r$,  
\item[(ii)\,] if $L_{[r,w]} \subset f(F)$, then $\eta_t(f)(F) = f(F)$ \ \ and 
\item[(iii)] if $f = \id$ on $L_s$, then $\eta_t(f) = f$. 
\end{itemize}

Finally, the required admissible deformation $\phi$ is defined by \\[1mm] 
\hspace*{10mm} $\phi : \E^u_\ast(i_F, \alpha; F, M) \times [0,1] \lra \E^u_\ast(i_F, \beta; F, M)$ : \ \  
$\phi_t(f) = 
\left\{ \hspace{-1mm} 
\begin{array}[c]{ll}
\psi_{2t}(f) & (t \in [0,1/2]),\\[2mm] 
\eta_{2t-1}\psi_1(f) & (t \in [1/2,1]). 
\end{array} \right.$
\vskip -4.5mm 
\end{proof}

The next remark is useful in the arguments below. 

\begin{remark}\label{rem_composition} 
Suppose $(M, d)$ is a metric manifold and $A \subset F \subset M$. Let $\E := \E^u_\ast(F, (M,d))_b$. 
Consider the following conditions for a map $\sigma : \E \to \E$ : 
\begin{itemize}
\item[(i)\ ] $\sigma(f) = f$ on $f^{-1}(M - A) - A$ for any $  f \in \E$. 
\item[(ii)\,] $\sigma(f)(F) = f(F)$ for any $f \in \E$ with $A \subset f(F)$. 
\item[(iii)] $\sigma(f) = \id$ on $F \cap \partial M$ for any $f \in \E$ with $f = \id$ on $F \cap \partial M$. 
\end{itemize} 

All of these conditions are preserved under composition 
(i.e., if both $\sigma, \tau : \E \to \E$ satisfy the condition (i) ((ii), (iii) respectively), 
then so does the composition  $\tau \sigma : \E \to \E$). 

The assertion for (i) is verified as follows: 
For $f \in \E$, let $X_f := f^{-1}(M - A) - A$. 
It is seen that $f, g \in \E$ and $g = f$ on $X_f$, then $X_f \subset X_g$. 
Hence, if both $\sigma$, $\tau$ satisfy the condition (i), then for any $f \in \E$ 
it follows that $\tau(\sigma(f)) = \sigma(f)$ on $X_{\sigma(f)}$, $\sigma(f) = f$ on $X_f$ and $X_f \subset X_{\sigma(f)}$, 
so that $(\tau \sigma)(f) = f$ on $X_f$. 
\end{remark}

\begin{proposition}\label{prop_ED_end}
Suppose $(M, d)$ is a metric manifold and $L$ is an isolated proper product end of $(M, d)$. 
If $(L, d|_L) : \ED$, then for any $F \in {\cal C \cal F}(M, L)$ and any $s > r > 1$ with $L_r \subset {\rm Int}_M (F \cap L)$ 
there exists a strong deformation retraction \\
\hspace*{40mm} $\phi$ \ of \ $\E^u_\ast(F, (M, d))_b$ \ onto \ $\E^u_\ast(F, (M, d); L_s)_b$ \\
which satisfies the following conditions:   
\begin{enumerate} 
\item For each $(f,t) \in \E^u_\ast(F, (M, d))_b \times [0,1]$ 
\begin{itemize}
\item[(i)\ ] $\phi_t(f) = f$ \ on $f^{-1}(M - {\rm Int}_M L_r) - {\rm Int}_M L_r$,  
\item[(ii)\,] if $L_r \subset f(F)$, then $\phi_t(f)(F) = f(F)$ \ \ and 
\item[(iii)] if $f = \id$ on $F \cap \partial M$, then $\phi_t(f) = \id$ on $F \cap \partial M$. 
\end{itemize}
\item For any $\alpha > 0$ there exists $\beta > 0$ such that \\
\hspace*{10mm} $\phi(\E^u_\ast(i_F, \alpha; F, (M, d)) \times [0,1]) \subset \E^u_\ast(i_F, \beta; F, (M, d))$. 
\end{enumerate}  
\end{proposition}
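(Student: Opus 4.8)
The plan is to globalize the bounded deformation furnished by Lemma~\ref{lem_ED_end}. That lemma already produces, for each prescribed radius $\alpha$, an admissible deformation over $L_s$ on the ball $\E^u_\ast(i_F,\alpha;F,(M,d))$ whose terminal stage lands in $\E^u_\ast(F,(M,d);L_s)$ and which satisfies exactly the three compatibility conditions (i), (ii), (iii) demanded in item (1) of the proposition, together with the absorption property ``if $f=\id$ on $L_s$ then $\phi_t(f)=f$''. Since $\E^u_\ast(F,(M,d))_b=\bigcup_{k}\E^u_\ast(i_F,a_k;F,(M,d))$ is an increasing union of open balls, the real task is to assemble these local deformations into a single strong deformation retraction onto $\E^u_\ast(F,(M,d);L_s)_b$, and that is where the work lies.

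First I would fix an exhausting sequence of radii $1\le a_1<a_2<\cdots\to\infty$ and apply Lemma~\ref{lem_ED_end} with $\alpha=a_k$ to obtain $\beta=b_k$ and an admissible deformation $\Psi^k$ over $L_s$ on $\E^u_\ast(i_F,a_k;F,(M,d))$ with image in $\E^u_\ast(i_F,b_k;F,(M,d))$. Crucially I would choose the radii inductively so that $a_{k+1}>b_k$; then the terminal stage of $\Psi^k$ carries the $a_k$-ball into the interior of the $a_{k+1}$-ball, so stage $k$ feeds continuously into stage $k+1$ and the growth recorded in condition (2) stays controlled.

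Next I would build the global homotopy $\phi$ as a telescoping concatenation. Partitioning $[0,1)$ by seams $0=\tau_0<\tau_1<\cdots\to 1$, on the $k$-th interval I run $\Psi^k$ applied to the embedding reached at the previous seam, tapered by a continuous cut-off $\sigma_k\colon\E^u_\ast(F,(M,d))_b\to[0,1]$ equal to $1$ on $\{\,d(\cdot,i_F)\le b_{k-1}\,\}$ and $0$ off the open $a_k$-ball. Applying this taper to the time parameter of $\Psi^k$ forces the $k$-th stage to reduce to the identity as an embedding approaches the sphere $\{\,d(\cdot,i_F)=a_k\,\}$ from inside, so it matches the ``do nothing'' prescription outside the $a_k$-ball and is continuous across that sphere. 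The decisive bookkeeping point is that for each fixed $f$ with $d(f,i_F)<\infty$ only finitely many stages act nontrivially: once a stage fully applies $\Psi^k$, the result lies in $\E^u_\ast(F,(M,d);L_s)_b$, and by the absorption property every later $\Psi^j$ fixes it. Hence on each $\E^u_\ast(i_F,a_N;F,(M,d))$ the concatenation is eventually stationary, which makes $\phi$ well defined and continuous, including at $t=1$, and exhibits $\phi_1$ as a retraction onto $\E^u_\ast(F,(M,d);L_s)_b$.

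Finally I would read off the listed properties. That $\phi$ is a strong deformation retraction follows since $\phi_0=\id$, $\phi_1$ maps into $\E^u_\ast(F,(M,d);L_s)_b$, and embeddings already fixing $L_s$ are never moved. Conditions (1)(i), (1)(ii), (1)(iii) hold for each $\Psi^k$ by Lemma~\ref{lem_ED_end} and Definition~\ref{def_adm_def}, and they pass to the concatenation because all three are preserved under composition, which is precisely Remark~\ref{rem_composition}; condition (2) is immediate from the radius control $a_k\mapsto b_k$. The step I expect to be the main obstacle is the continuity of the telescoped deformation at the seams and at $t=1$, together with arranging the radii and cut-offs so the tapering matches across the boundary spheres while the moving embeddings remain within the successive domains. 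The isolatedness of $L$ and the explicit radius output $b_k$ of Lemma~\ref{lem_ED_end} are what make this controllable, and Remark~\ref{rem_composition} is what keeps the defining conditions intact along the infinite composition.
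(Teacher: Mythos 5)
Your proposal follows essentially the same route as the paper's proof: iterate Lemma~\ref{lem_ED_end} along an exhausting sequence of balls $\E^u_\ast(i_F,\alpha_i;F,(M,d))$ with radii chosen so that each stage's output lands in the next stage's domain, taper each stage by a continuous cutoff on the space of embeddings, concatenate the stages over a reparametrized time interval (the paper uses $[0,\infty]$ in place of your seams accumulating at $1$), use the absorption property ``$f=\id$ on $L_s\Rightarrow\chi^i_t(f)=f$'' to make the infinite composition stabilize on each ball, and invoke Remark~\ref{rem_composition} to propagate conditions (1)(i)--(iii) through the composition. The one detail to adjust is your cutoff $\sigma_k$: requiring it to vanish only off the open $a_k$-ball does not by itself give continuity across the sphere $\{\,d(\cdot,i_F)=a_k\,\}$, since $\Psi^k_{\sigma_k(g)t}(g)\to g'$ is not automatic as $g$ approaches a boundary point $g'$ where $\Psi^k$ is undefined; as in the paper's choice of $\lambda_i$ (equal to $1$ on $\E^u_\ast(i_F,\alpha_i;F,(M,d))$ and to $0$ on the annulus between the $(\alpha_i+1)$- and $(\alpha_i+2)$-balls), the cutoff should vanish on a whole open annulus inside the domain ball so that the two clauses of the definition agree on an open overlap.
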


\begin{proof} Given $F, r, s$. For notational simplicity, we put $\E = \E^u_\ast(F, M)_b$ and 
$\E(\alpha) = \E^u_\ast(i_F, \alpha; F, M)$ $(\alpha > 0)$.  
The repeated applications of Lemma~\ref{lem_ED_end} yield a sequence 
of real numbers $\alpha_i > 0$ $(i \in \IN)$ and admissible deformations over $L_s$, 
$$\chi^i : \E(\alpha_i+2) \times [0,1] \lra \E(\alpha_{i+1}) \ \ (i \in \IN),$$ 
which satisfy the following conditions:  \hspace{5mm}  For each $i \in \IN$ 

\begin{enumerate} 
\item[(3)] $\alpha_i + 2 < \alpha_{i+1}$,  
\item[(4)] for each $(f, t) \in \E(\alpha_i+2) \times [0,1]$ 
\begin{itemize}
\item[(i)\ ] $\chi^i_t(f) = f$ \ on $f^{-1}(M - {\rm Int}_M L_r) - {\rm Int}_M L_r$,   
\item[(ii)\,] if $L_r \subset f(F)$, then $\chi^i_t(f)(F) = f(F)$ \ \ and \ \ 
(iii) if $f = \id$ on $L_s$, then $\chi^i_t(f) = f$. 
\end{itemize} 
\end{enumerate}  

For each $i \in \IN$ take a map $\lambda_i : \E(\alpha_i+2) \lra [0,1]$ such that 
$\lambda_i =  1$ on $\E(\alpha_i)$ and $\lambda_i =  0$ on $\E(\alpha_i+2) - \E(\alpha_i+1)$ 
and define a homotopy $\eta^i$ on $\E$ by 
$$\eta^i : \E \times [0,1] \lra \E : \hspace{2mm} 
\eta_t^i(f)  =  
\left\{ \hspace{-0.5mm} 
\begin{array}[c]{ll}
(\chi^i)_{\lambda_i(f) t}(f) & (f \in \E(\alpha_i+2)), \\[2mm] 
f & (f \in \E - \E(\alpha_i+1)). 
\end{array} \right.$$
It satisfies the following conditions: 
\begin{itemize}
\item[(5)]
\begin{itemize}
\item[(i)\ ] for each  $(f, t) \in \E \times [0,1]$ \\
\hspace*{1mm} (a) $\eta^i_0(f) = f$, \hspace{5mm} 
(b) $\eta^i_t(f) = f$ \ on \ $f^{-1}(M - {\rm Int}_M L_r) - {\rm Int}_M L_r $, \\  
\hspace*{1mm} (c) if $L_r \subset f(F)$, then $\eta^i_t(f)(F) = f(F)$, \\
\hspace*{1mm} (d) if $f = \id$ on $F \cap \partial M$, then $\eta^i_t(f) = \id$ on $F \cap \partial M$,  
\item[(ii)\,] 
(a) $\eta^i_t(f) = f$ $(t \in [0,1])$ \ if $f \in \E^u_\ast(F, M; L_s)_b$ or $f \not\in \E(\alpha_i+1))$, \\
(b) $\eta^i_t(\E(\alpha_i+2)) \subset \E(\alpha_{i+1})$ $(t \in [0,1])$ 

\item[(iii)] $\eta^i_1(\E(\alpha_i)) \subset \E^u_\ast(F, M; L_s)_b$ 
\item[(iv)] $\eta^j_t(\E(\alpha_i)) \subset \E(\alpha_i)$ \ \ $(j \leq i-1$, $t \in [0,1]$) 
\item[(v)\,] 
(a) $\eta^i_1\eta^{i-1}_1 \dots \eta^1_1(\E(\alpha_i)) \subset \eta^i_1(\E(\alpha_i)) \subset \E^u_\ast(F, M; L_s)_b$, \\ 
(b) $\eta^j_t \eta^{j-1}_1 \cdots \eta^i_1 \dots \eta^1_1(f) = \eta^i_1 \dots \eta^1_1(f)$ \ \ ($f \in \E(\alpha_i)$, $j \geq i+1$, $t \in [0,1]$).

\end{itemize}
\end{itemize} 

Replacing $[0,1]$ by $[0, \infty]$, the required homotopy $\phi : \E \times [0,\infty] \to \E$ is defined by 
$$\phi_t(f) = \left\{ \begin{array}[c]{@{\ }ll}
\eta^j_{t -j+1} \eta^{j-1}_1 \cdots \eta^1_1(f) & (t \in [j-1, j], j \in \IN), \\[2mm] 
\ds \lim_{j \to \infty}\eta^j_1 \cdots \eta^1_1(f) & (t = \infty). 
\end{array} \right.$$
\vskip 1mm 
The homotopy $\phi_t$ is well-defined for $t \in [0, \infty)$ by (5)(i)(a) and for $t = \infty$ by (5)(v)(b). 
The continuity of $\phi$ follows from 
$$\phi_t(f) = \eta^i_1 \dots \eta^1_1(f) \ \ ((f, t) \in \E(\alpha_i) \times [i, \infty]).$$ 
By (5)(i)(a), (ii)(a), (v)(a) the homotopy $\phi$ is a strong deformation retraction of $\E$ onto $\E^u_\ast(F, M, L_s)_b$. 
It remains to verify the conditions (1) and (2). 

(1) Consider the following condition $(\ast)$  for a map $\sigma : \E \to \E$ : 
\begin{itemize}
\item[$(\ast)$]
\begin{itemize}
\item[(i)\ ] $\sigma(f) = f$ on $X_f \equiv f^{-1}(M - {\rm Int}_M L_r) - {\rm Int}_M L_r$ for any $  f \in \E$. 
\item[(ii)\,] $\sigma(f)(F) = f(F)$ for any $f \in \E$ with $L_r \subset f(F)$. 
\item[(iii)] $\sigma(f) = \id$ on $F \cap \partial M$ for any $f \in \E$ with $f = \id$ on $F \cap \partial M$. 
\end{itemize} 
\end{itemize} 
By Remark~\ref{rem_composition}, the condition $(\ast)$ is preserved under composition. 
We have to show that the map $\phi_t$ satisfies this condition $(\ast)$ for any $t \in [0, \infty]$. 
By (5)(i) $\eta^i_t$ satisfies the condition $(\ast)$ for each $i \in \IN$ and $t \in [0,1]$. 
If $t \in [j-1, j]$ ($j \in \IN$), then the map $\phi_t = \eta^j_{t -j+1} \eta^{j-1}_1 \cdots \eta^1_1$ satisfies the condition $(\ast)$.  
For $t = \infty$, if $f \in \E(\alpha_i)$, then $\phi_\infty(f) = \eta^i_1 \dots \eta^1_1(f)$ and 
since $\eta^i_1 \dots \eta^1_1$ satisfies the condition $(\ast)$, 
it follows that 
(i) $\phi_\infty(f) = f$ on $X_f$, 
(ii) $\phi_\infty(f)(F) = f(F)$ if $L_r \subset f(F)$ and 
(iii) $\phi_\infty(f) = \id$ on $F \cap \partial M$ if $f = \id$ on $F \cap \partial M$. 
This implies the conclusion. 

(2) From (5) it follows that $\phi_t(\E(\alpha_i)) \subset \E(\alpha_{i+1})$ $(i \in \IN, t \in [0,\infty])$. 
\end{proof}

\begin{theorem}\label{thm_ED_sdr}
Suppose $(M, d)$ is a metric manifold and $L(1), \cdots, L(m) \subset M$ are pairwise disjoint isolated  proper product ends of $M$. 
If $(L(i), d|_{L(i)}) : \ED$ for each $i =1, \cdots, m$, then for any $F \subset M$ with $F \in {\cal C \cal F}(M, L(i))$ $(i =1, \cdots, m)$ and any $s_i > r_i > 1$ with $L(i)_{r_i} \subset {\rm Int}_M (F \cap L(i))$ $(i =1, \cdots, m)$ 
there exists a strong deformation retraction \\[1mm] 
\hspace*{40mm} $\phi$ \ of \ $\E^u_\ast(F, (M, d))_b$ \ onto \ $\E^u_\ast(F, (M, d); \cup_{i=1}^m L(i)_{s_i})_b$ \\[1mm]
such that 
\begin{enumerate} 
\item for each $(f, t) \in \E^u_\ast(F, (M, d))_b \times [0,1]$ 
\begin{itemize} 
\item[(i)\ ] $\phi_t(f) = f$ \ 
on $f^{-1}(M - {\rm Int}_M (\cup_{i=1}^m L(i)_{r_i})) - {\rm Int}_M (\cup_{i=1}^m L(i)_{r_i})$,   
\item[(ii)\,] if $\cup_{i=1}^m L(i)_{r_i} \subset f(F)$, then $\phi_t(f)(F) = f(F)$, 
\item[(iii)]  if $f = \id$ on $F \cap \partial M$, then $\phi_t(f) = \id$ on $F \cap \partial M$. 
\end{itemize} 
\vskip 1mm 
\item for each $  \alpha > 0$ there exists $ \beta > 0$ such that  \\ 
\hspace*{20mm} $\phi(\E^u_\ast(i_F, \alpha; F, (M, d)) \times [0,1]) \subset \E^u_\ast(i_F, \beta; F, (M, d))$. 
\end{enumerate} 
\end{theorem}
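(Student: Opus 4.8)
The plan is to reduce the multi-end statement to the single-end case already proved in Proposition~\ref{prop_ED_end} and then to concatenate the resulting deformations, using the pairwise disjointness of the ends to guarantee that they do not interfere. First I would apply Proposition~\ref{prop_ED_end} separately to each end $L(i)$: since $(L(i), d|_{L(i)}) : \ED$, $F \in {\cal C \cal F}(M, L(i))$ and $s_i > r_i > 1$ with $L(i)_{r_i} \subset {\rm Int}_M(F \cap L(i))$, the proposition furnishes a strong deformation retraction
$$\phi^{(i)} \ \text{of} \ \ \E := \E^u_\ast(F, (M, d))_b \ \ \text{onto} \ \ \E^u_\ast(F, (M, d); L(i)_{s_i})_b,$$
satisfying the analogues of (1)(i)--(iii) and (2) with $L(i)_{r_i}$ in place of the union. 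Writing $\Phi^{(i)} := \phi^{(i)}_1$ and $\Phi^{(0)} := \id$, I would define the required homotopy by concatenation,
$$\phi_t(f) = \phi^{(i)}_{mt - (i-1)}\big(\Phi^{(i-1)} \cdots \Phi^{(1)}(f)\big) \qquad \big(t \in [(i-1)/m,\, i/m],\ i = 1, \ldots, m\big).$$
Because $\phi^{(i)}_0 = \id$, the pieces agree at the junction points $t = i/m$, so $\phi$ is well defined and continuous by the pasting lemma together with the continuity of each $\phi^{(i)}$ and each $\Phi^{(j)}$.

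The heart of the argument is the non-interference step: deforming over one end must not disturb what has already been arranged over the others. Concretely, suppose $g := \Phi^{(i-1)} \cdots \Phi^{(1)}(f)$ equals $\id$ on $L(j)_{s_j}$ for some $j < i$. For any $x \in L(j)_{s_j}$ one has $x \in L(j)$ and $g(x) = x \in L(j)$, while $L(j) \cap L(i) = \emptyset$ and $L(i)_{r_i} \subset L(i)$, so neither $x$ nor $g(x)$ lies in ${\rm Int}_M L(i)_{r_i}$; hence $x \in g^{-1}(M - {\rm Int}_M L(i)_{r_i}) - {\rm Int}_M L(i)_{r_i}$, and property (1)(i) of $\phi^{(i)}$ yields $\phi^{(i)}_t(g)(x) = g(x) = x$. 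By induction on $i$, $\Phi^{(i)} \cdots \Phi^{(1)}(f) = \id$ on $L(j)_{s_j}$ for all $j \le i$, so $\phi_1(f) = \Phi^{(m)} \cdots \Phi^{(1)}(f)$ is the identity on $\bigcup_{i=1}^m L(i)_{s_i}$, i.e.\ $\phi_1$ maps $\E$ into $\E^u_\ast(F, (M, d); \bigcup_{i=1}^m L(i)_{s_i})_b$. The relative condition is immediate: if $f = \id$ on $\bigcup_i L(i)_{s_i}$, then at each stage $f$ already lies in the target of $\phi^{(i)}$, whence $\phi^{(i)}_t(f) = f$ for all $t$; thus $\phi_t(f) = f$ throughout, and $\phi$ is a strong deformation retraction onto $\E^u_\ast(F, (M, d); \bigcup_{i=1}^m L(i)_{s_i})_b$.

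It remains to transfer the locality properties (1) and the uniform bound (2) to the composite. For (1) I would invoke Remark~\ref{rem_composition} with $A = {\rm Int}_M\big(\bigcup_{i=1}^m L(i)_{r_i}\big)$ for (i), with $A = \bigcup_{i=1}^m L(i)_{r_i}$ for (ii), and the boundary condition for (iii). The one point to check is that each $\phi^{(i)}_t$, which satisfies these conditions for the smaller set attached to $L(i)$ alone, also satisfies them for the enlarged union: since ${\rm Int}_M L(i)_{r_i} \subset A$ one has $f^{-1}(M - A) - A \subset f^{-1}(M - {\rm Int}_M L(i)_{r_i}) - {\rm Int}_M L(i)_{r_i}$, so condition (i) persists, while $A \subset f(F)$ forces $L(i)_{r_i} \subset f(F)$, so condition (ii) persists; condition (iii) involves no set. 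As Remark~\ref{rem_composition} shows these conditions are preserved under composition, the concatenation $\phi_t$ satisfies (1)(i)--(iii). Finally, for (2), given $\alpha > 0$ I would chain the bounds of Proposition~\ref{prop_ED_end}(2): $\phi^{(1)}$ carries $\E^u_\ast(i_F, \alpha; F, (M, d))$ into $\E^u_\ast(i_F, \beta_1; F, (M, d))$ for some $\beta_1$, then $\Phi^{(2)}$ into $\E^u_\ast(i_F, \beta_2; F, (M, d))$, and so on, so that $\beta := \beta_m$ works for the whole deformation.

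The step I expect to be the main obstacle is the non-interference argument of the second paragraph: everything else is bookkeeping over the single-end result, but that step is exactly where the pairwise disjointness of the ends, fed into property (1)(i) of the single-end deformations, is indispensable.
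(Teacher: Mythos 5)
Your proposal is correct and follows essentially the same route as the paper: apply Proposition~\ref{prop_ED_end} to each end separately, combine the resulting deformations, verify non-interference via property (1)(i) together with the disjointness of the ends (exactly the paper's inductive check that $\phi^k_1\cdots\phi^1_1(f)=\id$ on $\cup_{i\le k}L(i)_{s_i}$), and transfer (1) and (2) by Remark~\ref{rem_composition} and by chaining the bounds. The only cosmetic difference is that you concatenate the homotopies over $m$ subintervals of $[0,1]$, whereas the paper uses the simultaneous composition $\phi_t=\phi^m_t\cdots\phi^1_t$; both work, yours just requiring the extra (correctly handled) check that the pieces agree at the junction points.
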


\begin{proof} 
Let $\E = \E^u_\ast(F, M)_b$ and $\E(\alpha) = \E^u_\ast(i_F, \alpha; F, M)$ $(\alpha > 0)$. 
By Proposition~\ref{prop_ED_end}, 
for each $i =1, \cdots, m$
there exists a strong deformation retraction 
$$\mbox{$\phi^i$ \ of \ $\E$ \ onto \ $\E^u_\ast(F, M; L(i)_{s_i})_b$}$$
which satisfies the following conditions: 
\begin{enumerate} 
\item[(3)] For each $(f, t) \in \E \times [0,1]$ 
\begin{itemize}
\item[(i)\ ] $\phi^i_t(f) = f$ \ on $f^{-1}(M - {\rm Int}_M L(i)_{r_i}) - {\rm Int}_M L(i)_{r_i}$,  
\item[(ii)\,] if $L(i)_{r_i} \subset f(F)$, then $\phi^i_t(f)(F) = f(F)$, 
\item[(iii)] if $f = \id$ on $F \cap \partial M$, then $\phi^i_t(f) = \id$ on $F \cap \partial M$. 
\end{itemize}
\item[(4)] For any $\alpha > 0$ there exists $\beta > 0$ such that 
$\phi^i(\E(\alpha) \times [0,1]) \subset \E(\beta)$. 
\end{enumerate}  

Then the composition \ $$\phi_t = \phi_t^m \cdots  \phi_t^1 : \E \lra \E \ \ (t \in [0, 1])$$ 
is a strong deformation retraction of $\E$ onto $\E^u_\ast(F, M; \cup_{i=1}^m L(i)_{s_i})_b$, that is, 
\begin{enumerate} 
\item[(5)] for each $  f \in \E$ \hspace{4mm} 
\begin{tabular}[t]{c@{\ \,}l}
(i) & $\phi_0(f) = f$, \hspace{4mm} (ii) $\phi_1(f) = \id$ on $\cup_{i=1}^m L(i)_{s_i}$, \\[2mm] 
(iii) & if $f = \id$ on $\cup_{i=1}^m L(i)_{s_i}$, then $\phi_t(f) = f$ $(t \in [0,1])$. 
\end{tabular}  
\end{enumerate} 
\vspace*{1mm} 
The condition (5)(ii) follows from the next assertion: 
\begin{enumerate}
\item[]
\begin{itemize}
\item[(ii)$_k$] $\phi_1^k \cdots  \phi_1^1(f) = \id$ \ on \ $\cup_{i=1}^k L(i)_{s_i}$ \hspace{5mm} $(k = 1, \cdots, m)$.   
\end{itemize} 
\end{enumerate}
This is verified inductively based upon (3)(i) and $\phi^i_1(f) = \id$ on $L(i)_{s_i}$ $(i=1, \cdots, m)$.

The condition (1) follows from (3) and Remark~\ref{rem_composition}. 
The condition (2) follows from (4). 
In fact, for any $\alpha_0 > 0$, we can find $\alpha_i > 0$ $(i=1, \cdots, m)$ such that 
$$\phi^i(\E(\alpha_{i-1}) \times [0,1]) \subset \E(\alpha_i) \hspace{4mm} (i=1, \cdots, m).$$ 
Then we have $\phi(\E(\alpha_0) \times [0,1]) \subset \E(\alpha_m)$. 
This completes the proof. 
\end{proof}

\begin{proof}[\bf Proof of Theorem~\ref{thm_ED_sdr_H}]
 By Theorem~\ref{thm_ED_sdr}, 
for $F = M$ and $(r_i, s_i) = (2,3)$ $(i =1, \cdots, m)$ 
we obtain a strong deformation retraction of $\E^u_\ast(M, M)_b$ onto $\E^u_\ast(M, M; L_3)_b$, 
which restricts to the required strong deformation retraction 
$\phi$ of ${\cal H}^u_b(M)$ onto ${\cal H}^u_b(M; L_3)$. 
\end{proof}


\section{Uniform isotopies}

\subsection{Uniform isotopies} \mbox{}

Suppose $X$ is a topological space. 
An isotopy on $X$ is a homeomorphism $H \in {\cal H}(X \times [0,1])$ which preserves $[0,1]$-factor 
(i.e., $H(x,t) = (H_t(x), t)$ $((x,t) \in X \times [0,1])$). 
By ${\cal H}(X \times [0,1])^I$ we denote the subgroup of ${\cal H}(X \times [0,1])$ consisting of all isotopies on $X$. 
If $H$ is an isotopy on $X$, then $H_t \in {\cal H}(X)$ for each $t \in [0,1]$ and we obtain the associated function 
$\widehat{H} : [0,1] \to {\cal H}(X)$ : $\widehat{H}(t) = H_t$. 
Usually an isotopy $H$ on $X$ is regarded as a sort of path in ${\cal H}(X)$ and 
described as the family $\{ H_t \}_{t \in [0,1]}$. 
Note that $(HK)_t = H_tK_t$ and $(H^{-1})_t = (H_t)^{-1}$ for any $H, K \in {\cal H}(X \times [0,1])^I$. 
For a subset ${\cal S}$ of ${\cal H}(X)$ and $f, g \in {\cal S}$
an  isotopy from $f$ to $g$ in ${\cal S}$ means an isotopy $H$ on $X$ such that 
$H_0 = f$, $H_1 = g$ and $H_t \in {\cal S}$ $(t \in [0,1])$. 

Suppose $(X, d)$ is a metric space. 
The product space $X \times [0,1]$ is given the metric $\tilde{d}$ defined by 
$$\mbox{$\tilde{d}((x,t), (y,s)) = d(x,y) + |t-s|$ \ \ ($(x,t), (y,s) \in X \times [0,1])$.}$$ 
The metric $\tilde{d}$ has the following basic properties. 

\begin{lemma}\label{lem_A_tilde_d} {\rm (1)} $(X \times [0,1], \tau_{\tilde{d}}) = (X, \tau_d) \times [0,1]$ $($Tychonoff product$)$.
\begin{enumerate}
\item[(2)] If $\rho$ is a metric on $X \times [0,1]$ uniformly equivalent to $\tilde{d}$, then 
\begin{itemize}
\item[(i)\,] $\tau_{\rho} = \tau_{\tilde{d}}$ on $X \times [0,1]$ 
\item[(ii)] $(a)$ ${\cal H}(X \times [0,1], \rho)_u = {\cal H}(X \times [0,1], \tilde{d})_u$ and \\
$(b)$ the sup-metrics on this space defined by $\rho$ and $\tilde{d}$ are uniformly equivalent. 
\end{itemize}

\item[(3)] Any $H \in {\cal H}(X \times [0,1], \tilde{d})^I$ induces the associated function 
$\widehat{H} : [0,1] \to {\cal H}(X, d)$. 
We have 
$\ds \tilde{d}(H, K) = d(\widehat{H}, \widehat{K}) \ \big( = \sup_{t \in [0,1]} d(H_t, K_t) \big)$ \ for any $H, K \in {\cal H}(X \times [0,1], \tilde{d})^I$.  
\hfill $\square$ 
\end{enumerate}
\end{lemma}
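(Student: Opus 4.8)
The plan is to dispatch the three assertions independently, since each reduces to a direct comparison of metrics on $Y := X \times [0,1]$. For (1), I would compare the $\tilde d$-balls with product boxes. From $\tilde d((x,t),(y,s)) = d(x,y) + |t-s|$ one reads off, for every $(x,t) \in Y$ and every $\e > 0$, the inclusions
$$O^d_{\e/2}(x) \times \big((t - \e/2,\, t + \e/2) \cap [0,1]\big) \subset O^{\tilde d}_\e((x,t)) \subset O^d_\e(x) \times \big((t-\e,\, t+\e) \cap [0,1]\big).$$
The product boxes form a basis for the Tychonoff topology $\tau_d \times \tau_{[0,1]}$ and the $\tilde d$-balls form a basis for $\tau_{\tilde d}$, so these two inclusions exhibit mutually refining bases and force $\tau_{\tilde d} = \tau_d \times \tau_{[0,1]}$.

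For (2), everything follows from unwinding the definition of uniform equivalence: for each $\e > 0$ there is a $\delta > 0$, \emph{independent of the points}, such that for all $a,b \in Y$ one has $\tilde d(a,b) < \delta \Rightarrow \rho(a,b) < \e$ and, symmetrically, $\rho(a,b) < \delta \Rightarrow \tilde d(a,b) < \e$. Part (i) is then immediate, since uniform continuity implies continuity, so the identity is a homeomorphism $(Y,\tau_{\tilde d}) \approx (Y,\tau_\rho)$ and $\tau_\rho = \tau_{\tilde d}$. For (ii)(a), by (i) the two groups have the same underlying set of self-homeomorphisms; that a map is moreover a \emph{uniform} homeomorphism with respect to $\rho$ if and only if with respect to $\tilde d$ comes from writing it as a composite with the identity viewed as a uniform homeomorphism between $(Y,\tilde d)$ and $(Y,\rho)$ and invoking the stability of uniform continuity under composition (Section 2.2). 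For (ii)(b), the point-independence of $\delta$ is exactly what is needed: if $\sup_y \tilde d(h(y),k(y)) < \delta$, then $\tilde d(h(y),k(y)) < \delta$ for \emph{every} $y$, hence $\rho(h(y),k(y)) < \e$ for every $y$, whence $\sup_y \rho(h(y),k(y)) \le \e$; the reverse estimate is symmetric, so the two sup-metrics are uniformly equivalent.

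For (3), the associated function $\widehat H(t) = H_t$ is well defined because each slice $H_t$ of an isotopy is by definition a homeomorphism of $X$. The claimed identity is then a one-line computation exploiting that $H$ and $K$ preserve the $[0,1]$-factor, so the $|t-t|$ contribution vanishes:
$$\tilde d(H(x,t), K(x,t)) = \tilde d\big((H_t(x), t), (K_t(x), t)\big) = d(H_t(x), K_t(x)).$$
Taking the supremum over $(x,t)$ and reorganizing as $\sup_t \sup_x$ yields $\tilde d(H,K) = \sup_{t} d(H_t, K_t) = d(\widehat H, \widehat K)$, where I read $\tilde d$ and $d$ on the left and right as the respective sup-metrics per the standing convention.

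None of the three parts presents a serious obstacle; this is a foundational lemma whose content is bookkeeping about product and sup-metrics. The only point meriting genuine care is (ii)(b): the argument hinges on uniform equivalence supplying a single modulus $\delta(\e)$ valid at all points, which is precisely what lets the estimate pass to the supremum. A merely pointwise (topological) equivalence of $\rho$ and $\tilde d$ would not suffice here, and keeping this distinction explicit is the one place where the proof must not be sloppy.
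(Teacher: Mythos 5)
Your proof is correct; the paper offers no proof at all (the terminal $\square$ marks the lemma as routine), and your ball-versus-box comparison for (1), the point-independent modulus $\delta(\e)$ passed through the supremum for (2), and the vanishing $|t-t|$ term for (3) are exactly the bookkeeping one would supply. One small notational remark: in (2)(ii)(a) the subscript $u$ on ${\cal H}(X\times[0,1],\rho)_u$ denotes, per the paper's convention, the full homeomorphism group endowed with the uniform topology of the sup-metric, so the assertion is that these agree as topological spaces; your part (i) already gives equality of the underlying sets and your part (b) gives equality of the topologies, so the statement is fully covered even though your aside about uniform homeomorphisms addresses a slightly different (also true) claim.
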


\begin{defn} An isotopy $H$ on $(X, \tau_d)$ is said to be 
\vspace*{-1mm}
\begin{itemize}
\item[(i)\,] a (bounded) uniform isotopy on $(X, d)$ if $H \in {\cal H}^u_{(b)}(X \times [0,1], \tilde{d})$, 
\item[(ii)] a level-wise bounded (uniform) isotopy on $(X, d)$ if $H_t \in {\cal H}^{(u)}_{b}(X, d)$ for each $t \in [0,1]$. 
\end{itemize}
\end{defn}

Let ${\cal H}^u_{(b)}(X \times [0,1], \tilde{d})^I$ and ${\cal H}^{lb(u)}(X \times [0,1], \tilde{d})^I$ 
denote the subgroups of ${\cal H}(X \times [0,1])^I$ consisting of (bounded) uniform isotopies and level-wise bounded (uniform) isotopies on $(X, d)$, 
endowed with the sup-metric and the uniform topology defined by $\tilde{d}$.  

Uniform isotopies are reinterpreted correctly as continuous paths in ${\cal H}^u(X,d)$. 

\begin{lemma}\label{lem_A_unform_isotopy}
{\rm (1)} If $H$ is a $($bounded\,$)$ uniform isotopy on $(X, d)$, then {\rm (i)} $H$ is a level-wise $($bounded\,$)$ uniform isotopy on $(X, d)$ and 
{\rm (ii)} the function $\widehat{H} : [0,1] \to {\cal H}^u_{(b)}(X,d)$, $\widehat{H}(t) = H_t$ \ is continuous. 

{\rm (2)} Any map $h : [0,1] \to {\cal H}^u_{(b)}(X,d)$ determines a $($bounded\,$)$ uniform isotopy $\tilde{h}$ on $(X,d)$ by 
$$\tilde{h}(x,t) = (h(t)(x), t) \hspace{5mm} ((x,t) \in X \times [0,1]).$$ 

{\rm (3)} The correspondence given in {\rm (1)} and {\rm (2)} defines a natural isometry 
$$\eta : \big({\cal H}^u_{(b)}(X \times [0,1], \tilde{d})^I,  \tilde{d}\big) \lra \big({\cal C}([0,1], ({\cal H}^u_{(b)}(X,d), d))_u, d\big) : 
\hspace{5mm} \eta(H) = \widehat{H}, \ \ \eta^{-1}(h) = \tilde{h}.$$
\end{lemma}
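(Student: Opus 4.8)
The plan is to handle the three parts in order, deducing them from the additive form $\tilde{d}((x,t),(y,s)) = d(x,y) + |t-s|$ of the product metric and from Lemma~\ref{lem_A_tilde_d}(3). For part~(1), I would exploit that this metric separates the two coordinates. Uniform continuity of $H$ gives, for each $\e > 0$, a $\delta > 0$ with $\tilde{d}(H(x,t),H(y,s)) < \e$ whenever $\tilde{d}((x,t),(y,s)) < \delta$. Specializing to $s = t$ and using $H(x,t) = (H_t(x),t)$ yields $d(H_t(x),H_t(y)) < \e$ whenever $d(x,y) < \delta$, with $\delta$ independent of $t$; the same applied to $H^{-1}$, together with $(H^{-1})_t = (H_t)^{-1}$, shows $H_t \in {\cal H}^u(X,d)$, giving (i). Specializing instead to $y = x$, $s = t_0$ gives $d(H_t(x),H_{t_0}(x)) < \e$ for all $x$ whenever $|t - t_0| < \delta$, hence $d(H_t, H_{t_0}) \le \e$; thus $\widehat{H}$ is (uniformly) continuous, giving (ii). Boundedness transfers because $\tilde{d}(H(x,t),(x,t)) = d(H_t(x),x)$.

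The substance of the lemma is part~(2): given a continuous $h : [0,1] \to {\cal H}^u(X,d)_{(b)}$, I must show that $\tilde{h}$ and its inverse $(x,t) \mapsto (h(t)^{-1}(x),t)$ are uniformly continuous on $(X\times[0,1],\tilde{d})$. Here the estimate
\[
\tilde{d}(\tilde{h}(x,t),\tilde{h}(y,s)) = d(h(t)(x),h(s)(y)) + |t-s| \le d(h(t)(x),h(t)(y)) + d(h(t),h(s)) + |t-s|
\]
splits the work: the middle term $d(h(t),h(s))$ is controlled by uniform continuity of $h$ (a continuous map out of the compact $[0,1]$), and $|t-s|$ is its own modulus. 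The essential point, and the step I expect to be the main obstacle, is the first term: I need the family $\{h(t)\}_{t\in[0,1]}$ to be \emph{uniformly equicontinuous}, i.e.\ a single $\delta$ serving all $t$ at once, which does not follow merely from the pointwise uniform continuity of each $h(t)$. The plan is to derive it from compactness of the image $h([0,1])$ in $({\cal H}^u(X,d),d)$: cover $h([0,1])$ by finitely many sup-metric balls $B(g_i,\e/3)$ whose centres $g_i$ are uniformly continuous, take $\delta$ to be the minimum of their moduli for $\e/3$, and bound $d(h(t)(x),h(t)(y))$ through the nearest $g_i$ by a three-$\e$ argument, using $d(h(t)(z),g_i(z)) \le d(h(t),g_i)$. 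Combining the three estimates shows $\tilde{h}$ is uniformly continuous; applying the same reasoning to the continuous path $t \mapsto h(t)^{-1}$ (continuity of inversion in the topological group ${\cal H}^u(X,d)$) handles the inverse.

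For the bounded case, the function $t \mapsto d(h(t),\id_X)$ is finite-valued and continuous (by the reverse triangle inequality for the sup-metric together with continuity of $h$), hence bounded on the compact $[0,1]$; since $\tilde{d}(\tilde{h}(x,t),(x,t)) = d(h(t)(x),x)$, this bounds $\tilde{d}(\tilde{h},\id)$, so $\tilde{h}$ is a bounded uniform isotopy. Finally, for part~(3) the assignments $H \mapsto \widehat{H}$ and $h \mapsto \tilde{h}$ are mutually inverse, since unwinding the definitions gives $\widehat{\tilde{h}}(t) = h(t)$ and $\widetilde{\widehat{H}} = H$; they are metric-preserving by Lemma~\ref{lem_A_tilde_d}(3), which states precisely $\tilde{d}(H,K) = d(\widehat{H},\widehat{K})$, this last being the sup-metric on ${\cal C}([0,1],{\cal H}^u(X,d)_{(b)})_u$; and they respect the group operations because $(HK)_t = H_tK_t$. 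Hence $\eta$ is the asserted isometric isomorphism, and the remaining content is routine once the equicontinuity step is in place.
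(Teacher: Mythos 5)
Your proposal is correct and follows essentially the same route as the paper: part (1) by specializing the modulus of uniform continuity of $H$ to $s=t$ and to $y=x$, part (2) via uniform continuity of $h$ and uniform equicontinuity of the family $\{h(t)\}_{t\in[0,1]}$ obtained from compactness of $[0,1]$ (together with continuity of inversion in the topological group ${\cal H}^u(X,d)$ to handle $\tilde h^{-1}$, and Lemma~\ref{lem_A_tilde_d}(3) for boundedness), and part (3) directly from Lemma~\ref{lem_A_tilde_d}(3). The only difference is that you spell out the three-$\e$ covering argument for equicontinuity, which the paper asserts without detail.
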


\begin{proof}
(1)(ii) Since $H$ is uniformly continuous, for any $\e > 0$ there exists $\delta > 0$ such that 
if $\tilde{d}((x,t), (y,s)) < \delta$, then $\tilde{d}(H(x,t), H(y,s)) < \e$. 
It is seen that if $|t - s| < \delta$, then $d(H_t, H_s) \leq \e$. 

(2) By the compactness of $[0,1]$ it follows that 
$h$ is uniformly continuous and the family $\{ h(t) \}_{t \in [0,1]}$ is equi-continuous. 
From these observations it follows that $\tilde{h}$ is uniformly continuous. 

Since ${\cal H}^u(X,d)$ is a topological group, 
we have a map $k : [0,1] \to {\cal H}^u(X,d)$ : $k(t) = h(t)^{-1}$ and the associated map 
$$\tilde{k} : X \times [0,1] \lra X \times [0,1]: \ \ \tilde{k}(x,t) = (k(t)(x), t).$$ 
is also uniformly continuous.
Since $\tilde{k}\tilde{h} = \tilde{h}\tilde{k} = \id_{X \times [0,1]}$, 
it follows that $\tilde{h}$ is a uniform isotopy on $(X, d)$. 
In the bounded case, since $h$ is continuous, by Lemma~\ref{lem_A_tilde_d}(3) we have 
$$\ds \tilde{d}(\tilde{h}, \id_{X \times [0,1]}) = d(h, \id_X) = \max_{t \in [0,1]} d(h(t), \id_X) < \infty.$$ 

(3) The assertion follows from Lemma~\ref{lem_A_tilde_d}(3). 
\end{proof}

\begin{remark}\label{rem_A_exp-law} 
Suppose $Z$ is a topological space, $K$ is a compact topological space and $(Y, \rho)$ is a metric space. 
The exponential law for function spaces 
induces a natural isometry
$$\chi : \big({\cal C}(Z \times K, (Y, \rho)), \rho)
\begin{array}[b]{c}
\text{\footnotesize $\cong$} \\[-3mm] 
\longrightarrow 
\end{array}
\big({\cal C}\big(Z, \big({\cal C}(K, (Y,\rho))_u, \rho\big)\big), \rho\big),$$
where the metric $\rho$ on $Y$ induces 
(i) the sup-metric $\rho$ on ${\cal C}(Z \times K, (Y, \rho))$ and 
(ii) the sup-metric $\rho$ on ${\cal C}(K, (Y,\rho))_u$, which further induces the sup-metric $\rho$ on ${\cal C}(Z, {\cal C}(K, (Y,\rho))_u)$. 
Any map $\phi \in {\cal C}(Z \times K, (Y,\rho))$ and the corresponding map $\Phi = \chi(\phi)$ are related by \ $\Phi(z)(k) = \phi(z, k)$ \ $((z,k) \in Z \times K)$.
\end{remark}

By Lemma~\ref{lem_A_unform_isotopy}(3) and Remark~\ref{rem_A_exp-law} 
we obtain natural isometries between function spaces related to uniform isotopies. 

\begin{proposition}\label{prop_exp_low} 
For any topological space $Z$ there exist natural isometries 
$$\begin{array}[c]{rcl}
({\cal C}(Z \times [0,1], ({\cal H}^u_{(b)}(X,d), d)), d)
& \begin{array}[c]{@{}c@{}}
\text{\footnotesize $\chi$} \\[-2mm] 
\longrightarrow \\[-2mm] 
\cong 
\end{array} & 
\big({\cal C}\big(Z, ({\cal C}([0,1], ({\cal H}^u_{(b)}(X,d),d))_u, d)\big), d\big) \\[2mm] 
& \begin{array}[c]{@{}c@{}}
\text{\footnotesize $\eta_{\#}$} \\[-2mm] 
\longleftarrow \\[-2mm]
\cong 
\end{array} & 
\big({\cal C}\big(Z, \big({\cal H}^u_{(b)}(X \times [0,1], \tilde{d})^I, \tilde{d}\big)\big), \tilde{d} \big).
\end{array}$$   
For any map $\phi \in {\cal C}(Z \times [0,1], {\cal H}^u_{(b)}(X,d))$, 
the corresponding maps 
$\psi \in {\cal C}(Z, {\cal C}([0,1], ({\cal H}^u_{(b)}(X,d),d))_u)$ and   
$\Phi \in {\cal C}(Z, {\cal H}^u_{(b)}(X \times [0,1], \tilde{d})^I)$ are related by \ \ 
$$\mbox{$\chi(\phi) = \psi = \eta \Phi$ \ \ and \ \ $\phi(z,t) = \psi(z)(t) = \Phi(z)_t$ \ \ $((z, t) \in Z \times [0,1])$.}$$
\vskip -8mm \hfill $\square$
\end{proposition}


\subsection{Local contractibility of ${\cal H}^u(X, d)$} \mbox{}

Since the space ${\cal H}^u(X, d)$ is not necessarily locally contractible in the usual sense, 
in the literature some authors introduced another weaker notion of local contractibility of ${\cal H}^u(X, d)$ 
in order to obtain some affirmative results (cf. \cite[Section 5.6]{Ru}). 
In this subsection we clarify the relationship between this weaker notion and the standard notion of 
local contractibility of ${\cal H}^u(X, d)$ as a topological space. 
For this purpose, first we recall the standard notions on local contractibility of topological spaces. 
Suppose $Z$ is any topological space and $C$ is a subset of $Z$. 
We say that 
\begin{itemize}
\item[(i)\ ] $C$ is contractible in $Z$ if there exists a homotopy 
$\phi : C \times [0,1] \to Z$ such that $\phi_0(z) = z$ $(z \in C)$ and $\phi_1$ is a constant map 
(i.e., $\phi_1(z) \equiv z_0$ for some fixed point $z_0$ in $Z$), 
\item[(ii)\,] 
\begin{itemize}
\item[(a)] $Z$ is locally contractible 
if for each point $z \in Z$ and any neighborhood $V$ of $z$ in $Z$ there exists a neighborhood $U$ of $z$ in $V$ such that 
$U$ is contractible in $V$, 
\item[(b)] $Z$ is weakly locally contractible 
if each point of $Z$ has a neighborhood contractible in $Z$, 
\end{itemize}

\item[(iii)] $Z$ is contractible if it is contractible in itself. 
\end{itemize}
For a topological group $G$ it is shown that  
$G$ is locally contractible if and only if some neighborhood of the unit element of $G$ is contractible in $G$. 

Proposition~\ref{prop_exp_low} leads us to an interpretation of local contractibility of ${\cal H}^u_b(X,d)$ in terms of uniform isotopies.  
Note that ${\cal H}^u_b(X,d)$ is an open and closed subgroup of ${\cal H}^u(X,d)$. 

\begin{proposition}\label{prop_A_contraction} 
Suppose ${\cal A} \subset {\cal B} \subset {\cal H}^u(X,d)$ and $h_0 \in {\cal A}$. 
Then ${\cal A}$ is contractible in ${\cal B}$ if and only if 
there exists a map 
$\Phi : {\cal A} \to {\cal H}^u(X \times [0,1], \tilde{d})^I$ 
such that  for each $h \in {\cal A}$ the image $\Phi(h)$ is a uniform isotopy from $h$ to $h_0$ in ${\cal B}$. 
\end{proposition}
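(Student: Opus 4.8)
The plan is to reduce the proposition to the exponential-law isometry of Lemma~\ref{lem_A_exp_low} applied with $Z = {\cal A}$, which identifies a map $\Phi : {\cal A} \to {\cal H}^u(X \times [0,1], \tilde d)^I$ with a homotopy $\phi : {\cal A} \times [0,1] \to {\cal H}^u(X,d)$ through the relation $\Phi(h)_t = \phi(h,t)$. Under this correspondence the requirement that each $\Phi(h)$ be a uniform isotopy from $h$ to $h_0$ lying in ${\cal B}$ translates exactly into $\phi_0 = \id_{\cal A}$, $\phi_1 \equiv h_0$ and $\phi({\cal A}\times[0,1]) \subset {\cal B}$; equivalently, $\Phi$ exists if and only if ${\cal A}$ is contractible in ${\cal B}$ \emph{to the particular point $h_0$}. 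Thus the whole statement reduces to the equivalence of contractibility in ${\cal B}$ to some point with contractibility in ${\cal B}$ to $h_0$.

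For the implication $(\Leftarrow)$ I would start from a given $\Phi$ and take $\phi$ to be the homotopy corresponding to it under Lemma~\ref{lem_A_exp_low}. Reading off $\phi_0 = \id_{\cal A}$, $\phi_1 \equiv h_0$ and $\phi({\cal A}\times[0,1]) \subset {\cal B}$ from the isotopy properties of the $\Phi(h)$ shows at once that $\phi$ is a contraction of ${\cal A}$ in ${\cal B}$; no endpoint adjustment is needed here.

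The only real work is in the implication $(\Rightarrow)$, where the definition of contractibility merely provides a homotopy $\phi : {\cal A} \times [0,1] \to {\cal B}$ with $\phi_0 = \id_{\cal A}$ and $\phi_1 \equiv z_0$ for some point $z_0 \in {\cal B}$ that need not be $h_0$. I would correct the endpoint by concatenating $\phi$ with the reverse of the path $t \mapsto \phi_t(h_0)$, which runs inside ${\cal B}$ from $h_0$ to $z_0$; this is the one place where the hypothesis $h_0 \in {\cal A}$ is used. Concretely, put
$$\psi(h,t) = \begin{cases} \phi(h, 2t) & (t \in [0,1/2]), \\ \phi(h_0, 2-2t) & (t \in [1/2,1]). \end{cases}$$
The two branches agree at $t = 1/2$ (both equal $z_0$), so $\psi : {\cal A} \times [0,1] \to {\cal B}$ is continuous by the pasting lemma and satisfies $\psi_0 = \id_{\cal A}$ and $\psi_1 \equiv h_0$. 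Transporting $\psi$ back through Lemma~\ref{lem_A_exp_low} yields a map $\Phi : {\cal A} \to {\cal H}^u(X \times [0,1], \tilde d)^I$ with $\Phi(h)_t = \psi(h,t)$; its values are uniform isotopies by construction (cf.\ Lemma~\ref{lem_A_unform_isotopy}\,(2)), running from $h$ to $h_0$ within ${\cal B}$, as required.

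The expected obstacle is not any deep point but precisely the basepoint change in $(\Rightarrow)$: one must notice that a generic contraction lands at an arbitrary $z_0$ and reroute it to $h_0$ along the already-available path $\phi_\bullet(h_0)$, after which the exponential-law correspondence and Lemma~\ref{lem_A_unform_isotopy}\,(2) do all the remaining work. The residual verifications---continuity of $\psi$ via the pasting lemma, and the fact that the correspondence of Lemma~\ref{lem_A_exp_low} preserves the endpoint values and the ``stays in ${\cal B}$'' condition---are routine.
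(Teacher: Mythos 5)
Your proof is correct and follows the route the paper intends: the proposition is stated without an explicit proof as a direct consequence of the exponential-law isometry of Lemma~\ref{lem_A_exp_low} (together with Lemma~\ref{lem_A_unform_isotopy}), and your use of that correspondence is exactly the intended argument. Your basepoint-change step in $(\Rightarrow)$ correctly supplies the one detail the paper leaves implicit, namely that a contraction of ${\cal A}$ in ${\cal B}$ to an arbitrary $z_0$ can be rerouted to $h_0$ along the path $t \mapsto \phi_t(h_0)$, which is where the hypothesis $h_0 \in {\cal A}$ enters.
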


\begin{corollary}\label{cor_LC_uniform isotopy} 
{\rm (1)} ${\cal H}^u(X,d)$ is locally contractible if and only if 
there exists a neighborhood ${\cal U}$ of  
$\id_X$ in ${\cal H}^u_b(X, d)$ and a map 
$\Phi : \U \to {\cal H}^u(X \times [0,1], \tilde{d})^I$ 
such that for each $h \in {\cal U}$ the image $\Phi(h)$ is a uniform isotopy from $h$ to $\id_X$. 

{\rm (2)} ${\cal H}^u_b(X,d)$ is contractible if and only if 
there exists a map $\Phi : {\cal H}^u_b(X, d) \to {\cal H}^u(X \times [0,1], \tilde{d})^I$ 
such that for each $h \in {\cal H}^u_b(X, d)$ the image $\Phi(h)$ is a uniform isotopy from $h$ to $\id_X$. 
\end{corollary}

\begin{remark}\label{rem_lbu->u} In Corollary~\ref{cor_LC_uniform isotopy} (1), (2) we have ${\rm Im}\,\Phi \subset  {\cal H}^u_b(X \times [0,1], \tilde{d})^I$. 
\end{remark}

In \cite[Section 5.6]{Ru} for any kind of topology on function spaces (Whitney, uniform and compact-open topologies, etc), 
a weaker notion of local contractibility of homeomorphism groups is formulated 
based on continuous selections of isotopies to the identity map. 
As for the uniform topology this formulation 
just means replacing uniform isotopies by level-wise uniform isotopies in Corollary~\ref{cor_LC_uniform isotopy}. 

\begin{defn}\label{def_locally contractible*}
We say that ${\cal H}^u(X, d)$ is locally contractible* 
if there exist a neighborhood ${\cal U}$ of $\id_X$ in ${\cal H}^u_b(X, d)$ and a map 
$$\Phi : \U \lra {\cal H}^{lbu}(X \times [0,1], \tilde{d})^I$$ 
such that for each $h \in {\cal U}$ the image $\Phi(h)$ is a level-wise bounded uniform isotopy from $h$ to $\id_X$. 
The map $\Phi$ is called a local contraction* of ${\cal H}^u_b(X, d)$. 
If we can take ${\cal U} = {\cal H}^u_b(X, d)$, 
then we say that ${\cal H}^u_b(X, d)$ is contractible* and 
call the map $\Phi$ a contraction* of ${\cal H}^u_b(X, d)$. 
\end{defn}

To avoid any ambiguity, we should use the precise terminology as ``(locally) contractible by means of level-wise uniform isotopies'' 
instead of the temporary phrase ``(locally) contractible*''. 
But the former is too long for our treatment in this article. 
Note that in \cite[Section 5.6]{Ru} this weaker notion is also described by the same terminology ``(locally) contractible''. 

The next corollary is a direct consequence of Corollary~\ref{cor_LC_uniform isotopy}.  

\begin{corollary}\label{cor_LC_uniform isotopy_2} 
If ${\cal H}^u_b(X, d)$ is $($locally$)$ contractible, then ${\cal H}^u_b(X, d)$ is $($locally$)$ contractible*. 
\end{corollary}


\section{Alexander isotopies in $\kappa$-cones} 

In this section we discuss basic properties of Alexander isotopies in $\kappa$-cones $(\kappa \leq 0)$ (cf.\,\cite{Ki}).  
Throughout this section we consider the $\kappa$-cone $C_\kappa(X, d) = (C(X), \tilde{d}_\kappa)$ over a compact metric space $(X, d)$.  
The metric $\tilde{d}_\kappa$ on $C_\kappa(X, d)$ induces the sup-metric $\tilde{d}_\kappa$ on ${\cal H}(C_\kappa(X, d))$ 
and the metric $\tilde{\tilde{d}}_\kappa$ on $C_\kappa(X, d) \times [0,1]$. We follow the convention $0 \cdot \infty = 0$.

\subsection{General properties} \mbox{} 

The $\kappa$-cone $C_\kappa(X, d)$ admits the radial transformations 
$$\mbox{$\theta_t \in {\cal H}(C_\kappa(X, d)) : \ \ \theta_t(u) = tu$ \hspace{5mm} $(t \in (0, \infty))$.}$$ 

\begin{defn}\label{defn_Alexander's trick} (Alexander's trick) \ 
Each $h \in {\cal H}(C_\kappa(X, d))$ determines the following functions: 
\vspace{1mm} 
\begin{enumerate}
\item $h_t \in {\cal H}(C_\kappa(X, d))$ \ $(t \in [0,\infty))$ : \ \ 
$h_t = 
\left\{ \hspace{-1mm} \begin{array}[c]{ll}
\theta_t h (\theta_{1/t}) & (t \in (0, \infty)), \\[2mm]
\ \ \id & (t = 0).   
\end{array}\right. $
\vspace{1mm} 
\item $\eta_h : [0, \infty) \lra {\cal H}(C_\kappa(X, d)) : \ \  \eta_h(t) = h_t$ 
\item $\Phi(h) : C_\kappa(X, d) \times [0,1] \to C_\kappa(X, d) \times [0,1]$ : \ \ 
$\Phi(h)(u,t) = (h_t(u), t) = 
\left\{ \hspace{-1mm} \begin{array}[c]{ll}
\ds \big(th(\mbox{$\frac{1}{t}$} u), t\big) & (t \in (0, 1]), \\[2mm]
\ (u,0) & (t = 0).
\end{array}\right.$
\end{enumerate}
\end{defn} 

The function $\Phi(h)$ is always continuous on the open subset $C_\kappa(X, d) \times (0,1]$. 
However, it is not necessarily continuous on a point in $C_\kappa(X, d) \times \{ 0 \}$ (for instance, consider the rotation in $\IR^2$).

Recall the function $\lambda_\kappa$ defined in Remark~\ref{rem_kappa-cone} in Section 2.5. 

\begin{lemma}\label{lem_theta_t}
{\rm (1)} For any $x,y \in X$ and $r, s,t \in [0, \infty)$ 
\begin{enumerate}
\item[] 
\begin{itemize}
\item[(i)\,] $\lambda_\kappa (\tilde{d}_\kappa (tsx, tsy)) = \lambda_\kappa(2ts) \sin \frac{1}{\,2\,} d_\pi(x, y)
= \frac{\lambda_\kappa(2ts)}{\lambda_\kappa(2s)} \lambda_\kappa (\tilde{d}_\kappa (sx, sy)),$ 
\item[(ii)] $\tilde{d}_\kappa(trx, tsx) = t \tilde{d}_\kappa(rx, sx) = t|r-s|$. 
\end{itemize}
\item[(2)]  For any $t \in [0,1]$ 
\begin{itemize}
\item[(i)\,] $\lambda_\kappa(ts) \leq t\lambda_\kappa(s)$ \ $(s \in [0, \infty))$, 
\item[(ii)] $\lambda_\kappa(\tilde{d}_\kappa (tu, tv)) \leq t \lambda_\kappa(\tilde{d}_\kappa (u, v))$, \ 
$\tilde{d}_\kappa (tu, tv) \leq \tilde{d}_\kappa (u, v)$ \ \ $(u, v \in C_\kappa(X, d))$. 
\end{itemize}
\end{enumerate}
\end{lemma} 

\begin{proof}
(2)(ii) $\lambda_\kappa^2  (\tilde{d}_\kappa (trx, tsy)) 
= \lambda_\kappa^2 (t(r-s)) + \lambda_\kappa(2tr) \lambda_\kappa(2ts) \sin^2 \mbox{$\frac{1}{\,2\,}$} d_\pi(x, y) 
\leq t^2 \lambda_\kappa^2  (\tilde{d}_\kappa (rx, sy)).$
\end{proof}

\begin{remark}\label{rmk_B} (1) When $\kappa = 0$, the map $\theta_t$ is $t$-similar for $t \in (0, \infty)$.  

(2) When $\kappa < 0$, the map $\theta_t$ is 1-Lipschitz for $t \in (0,1]$. 
Furthermore, if $(X, d)$ is not a discrete space, then 
$\theta_t$ is not uniformly continuous for $t \in (1, \infty)$,  
since $\frac{\lambda_\kappa(2ts)}{\lambda_\kappa(2s)} \to \infty$ $(s \to \infty)$ in Lemma~\ref{lem_theta_t}\,(1)(i). 
\end{remark}

\begin{lemma}\label{lem_Alexander_trick} Let $h, k \in {\cal H}(C_\kappa(X, d))$.  
\begin{enumerate}
\item 
\begin{itemize}
\item[(i)\,] $h_1= h$,  \ \ $\id_t = \id$, \ \ $(kh)_t = k_th_t$, \ \ $(h^{-1})_t = (h_t)^{-1}$ \ \ and \ \ $h_{st} = (h_s)_t$ \ \ $(s, t \in [0, \infty))$. 
\vskip 1mm 
\item[(ii)] For any $t \in [0, 1]$ \\   
{\rm (a)} $\lambda_\kappa(\tilde{d}_\kappa (h_t, k_t)) \leq t \lambda_\kappa(\tilde{d}_\kappa (h, k))$, \ \ 
$\lambda_\kappa(\tilde{d}_\kappa (h_t, \id)) \leq t \lambda_\kappa(\tilde{d}_\kappa (h, \id))$, \\ 
{\rm (b)} $\tilde{d}_\kappa(h_t, k_t) \leq \tilde{d}_\kappa(h, k)$, \ \ 
$\tilde{d}_\kappa(h_t, \id) \leq \tilde{d}_\kappa(h, \id)$.  
\end{itemize}

\item 
\begin{itemize}
\item[(i)\,] $\Phi(\id) = \id$, \ \ $\Phi(kh) = \Phi(k)\Phi(h)$ \ \ and \ \ $\Phi(h^{-1}) = \Phi(h)^{-1}$. 
\vskip 1mm 
\item[(ii)] $\tilde{\tilde{d}}_\kappa(\Phi(h), \Phi(k)) = \tilde{d}_\kappa(h, k)$ \\
$($where $\tilde{\tilde{d}}_\kappa$ is the sup-metric induced from the metric $\tilde{\tilde{d}}_\kappa$ on $C_\kappa(X, d) \times [0,1])$. 
\end{itemize}

\item If $h \in {\cal H}_b(C_\kappa(X, d))$, then 
\item[]
\begin{itemize}
\item[(i)\ ] $h_t \in {\cal H}_b(C_\kappa(X, d))$ $(t \in [0, 1])$, \ {\rm (ii)} $\eta_h$ is continuous at $t = 0$ \ \ and  
\item[(iii)] $\Phi(h)$ is a level-wise bounded isotopy on $C_\kappa(X, d)$ from $\id$ to $h$. 
\end{itemize}

\item $\Phi(h)$ is a uniform isotopy on $C_\kappa(X, d)$ if and only if 
$\eta_h|_{[0,1]}$ is continuous  and $h_t \in {\cal H}^u(C_\kappa(X, d))$ $(t \in [0, 1])$. 
In this case, $\Phi(h)$ and $h_t$ $(t \in [0, 1])$ are bounded.  
\end{enumerate}
\end{lemma}

\begin{proof} (1)(ii)(a) 
From Lemma~\ref{lem_theta_t}\,(2)(ii) it follows that 
for any $t \in (0, 1]$ and $u \in C_\kappa(X, d)$ \\[1mm] 
\hspace*{15mm} 
$\lambda_\kappa(\tilde{d}_\kappa (h_t(u), k_t(u))) 
= \lambda_\kappa(\tilde{d}_\kappa (th(\frac{1}{t} u), t k(\frac{1}{t} u))) 
\leq t \lambda_\kappa(\tilde{d}_\kappa (h(\frac{1}{t} u), k(\frac{1}{t} u))) 
\leq t \lambda_\kappa(\tilde{d}_\kappa (h, k)).$

(2)(ii) The claim follows from the following observations; \ \ For any $u \in C_\kappa(X, d)$ and $t \in [0,1]$ \\[2mm] 
\hspace*{8mm} 
$\begin{array}[c]{l}
\tilde{\tilde{d}}_\kappa(\Phi(h)(u,t), \Phi(k)(u,t)) 
= \tilde{\tilde{d}}_\kappa((h_t(u),t), (k_t(u),t)) 
= \tilde{d}_\kappa(h_t(u), k_t(u)) \leq \tilde{d}_\kappa(h_t, k_t)\leq \tilde{d}_\kappa(h, k), \\[2mm] 
\tilde{d}_\kappa(h(u), k(u)) = \tilde{\tilde{d}}_\kappa(\Phi(h)(u,1), \Phi(k)(u,1)) \leq \tilde{\tilde{d}}_\kappa(\Phi(h), \Phi(k)).
\end{array}$ 
\vskip 2mm 
(3) The claims (i) and (ii) follow from (1)(ii) (b) and (a) respectively. 

(iii) 
The function $\Phi(h)$ is continuous at  any $(u,0)$ $(u \in C_\kappa(X, d))$ by (ii) and 
at any $(u,t) \in C_\kappa(X, d) \times (0,1]$ by the definition of $\Phi(h)$ itself 
and the continuity of the scalor multiplication $C_\kappa(X, d) \times [0,\infty) \ni (v, s) \mapsto sv \in C_\kappa(X, d)$. 

(4) The assertion follows from Lemma~\ref{lem_A_unform_isotopy}. 
Note that the open subgroup ${\cal H}^u_b(C_\kappa(X, d))$ includes 
any continuous path in ${\cal H}^u(C_\kappa(X, d))$ emanating from $\id$. 
\end{proof}

For each $h \in {\cal H}_b(C_\kappa(X, d))$ 
the isotopy $\Phi(h) = \{ h_t \}_{t \in [0,1]}$ on $C_\kappa(X, d)$ from $\id$ to $h$ is called the Alexander isotopy for $h$. 
By Lemma~\ref{lem_Alexander_trick}\,(2)(ii) we obtain an isometric embedding  
$$\Phi : {\cal H}_b(C_\kappa(X, d)) \ni h \longmapsto \Phi(h) \in {\cal H}^{lb}(C_\kappa(X, d) \times [0,1], \tilde{\tilde{d}}_\kappa)^I,$$ 
which constitutes a contraction* of ${\cal H}_b(C_\kappa(X, d))$. 
However, in Example~\ref{exp_Alexander_isotopy_kappa-cones} below, 
for any compact Lipschitz metric manifold $(X,d)$ we construct an example of $k \in {\cal H}^u_b(C_\kappa(X, d))$ 
for which $\eta_k|_{[0,1]}$ is not continuous. 
This means that the function associated to the contraction* $\Phi$, 
$$\mbox{$\phi : {\cal H}_b(C_\kappa(X, d)) \times [0,1] \lra {\cal H}_b(C_\kappa(X, d))$: \ \ $\phi(h, t) = h_t$}$$
is not necessarily continuous on ${\cal H}^u_b(C_\kappa(X, d)) \times [0,1]$. 

In the case that $\kappa = 0$ Alexander isotopies have some specific properties: 
In this case, $\theta_t$ is a $t$-similar transformation on $C_0(X, d)$ for each $t> 0$. 
Consider the function 
$$\xi_t : {\cal H}(C_0(X, d)) \lra {\cal H}(C_0(X, d)) : \ \ \xi_t(h) = h_t \hspace{5mm} (t \in [0, \infty))$$ 

\begin{lemma}\label{lem_Alexander_trick_k=0} Let $h, k \in {\cal H}(C_0(X, d))$ and $t \in [0, \infty)$.
\begin{enumerate}

\item $\tilde{d}_0(h_t, k_t) = t\tilde{d}_0(h,k)$, \ \ $\tilde{d}_0(h_t, \id) = t\tilde{d}_0(h, \id)$. 
\item The map $\xi_t$ is a $t$-similar embedding for $t > 0$. 
\item If $h \in {\cal H}_b(C_0(X, d))$, then $h_t \in {\cal H}_b(C_0(X, d))$. 
If $h \in {\cal H}^u(C_0(X, d))$, then $h_t \in {\cal H}^u(C_0(X, d))$. \\[0.5mm]
If $h \in {\cal H}^u_b(C_0(X, d))$, then \ $\Phi(h) \in {\cal H}^{lbu}\big(C_0(X, d) \times [0,1], \tilde{\tilde{d}}_0 \big)^I.$ 

\item 
\begin{itemize}
\item[(i)\ ] $\eta_h$ is continuous at $t = 0$ if and only if $h \in {\cal H}_b(C_0(X, d))$. 
\item[(ii)\,] If $\eta_h$ is continuous at a point in $(0, \infty)$, then $\eta_h$ is continuous at any point in $(0, \infty)$. 
\item[(iii)] $\Phi(h)$ is a uniform isotopy if and only if $h \in {\cal H}^u(C_0(X, d))$ and $\eta_h$ is continuous. \hfill $\square$  
\end{itemize}
\end{enumerate}
\end{lemma}

The contraction* $\Phi$ of ${\cal H}_b(C_0(X, d))$ restrcts to a contraction* of ${\cal H}^u_b(C_0(X, d))$, 
$$\Phi : {\cal H}^u_b(C_0(X, d)) \to {\cal H}^{lbu}\big(C_0(X, d) \times [0,1], \tilde{\tilde{d}}_0 \big)^I.$$ 
However, the function associated to this contraction* $\Phi$, 
$$\mbox{$\phi : {\cal H}^u_b(C_0(X, d)) \times [0,1] \to {\cal H}^u_b(C_0(X, d))$: \ \ $\phi(h, t) = h_t$}$$
is not continuous for any compact Lipschitz metric manifold $(X,d)$. 


\subsection{Contraction of subgroups} \mbox{} 

One can expect that the function $\phi$ restricts to a contraction of a suitable subgroup of ${\cal H}^u_b(C_\kappa(X, d))$.  
In this subsection we discuss this question. 
Consider the following subset of ${\cal H}^u_b(C_\kappa(X, d))$: 
$${\cal G} = {\cal G}(C_\kappa(X, d)) := \{ h \in {\cal H}(C_\kappa(X, d)) \mid \mbox{$\Phi(h)$ is a uniform isotopy.} \}.$$  
Recall that (i) $\Phi(h)$ is a uniform isotopy if and only if $\eta_h|_{[0,1]}$ is continuous and $h_t \in {\cal H}^u(C_\kappa(X, d))$ $(t \in [0, 1])$
and (ii) if $\Phi(h)$ is a uniform isotopy, then $h_t \in {\cal H}_b(C_\kappa(X, d))$ $(t \in [0,1])$. 

\begin{proposition}\label{prop_G} {\rm (1)} ${\cal G}$ is a subgroup of ${\cal H}^u_b(C_\kappa(X, d))$. 
{\rm (2)} If $h \in {\cal G}$, then $h_s \in {\cal G}$ for any $s \in [0, 1]$. 
\begin{enumerate}
\item[(3)] The function $\phi$ restricts to a contraction $\phi : {\cal G} \times [0,1] \to {\cal G}$. 
\end{enumerate}
\end{proposition}

\begin{proof}
(1) The claim follows from Lemma~\ref{lem_Alexander_trick}\,(2)(i) and (4). 

(2) By Lemma~\ref{lem_Alexander_trick}\,(1)(i) we have $\eta_{h_s}(t) = (h_s)_t = h_{st}$ $(t \in [0,1])$.

(3) By the definition of ${\cal G}$ the map $\Phi$ restricts to the map $\Phi : {\cal G} \to {\cal H}^u_b(C_\kappa(X, d) \times [0,1], \tilde{\tilde{d}}_\kappa)^I$. 
Proposition~\ref{prop_exp_low} implies that the associated function $\phi : {\cal G} \times [0,1] \to {\cal H}^u_b(C_\kappa(X, d))$ is continuous.  
(We can also consider the map $\psi : {\cal G} \to {\cal C}([0,1], {\cal H}^u_b(C_\kappa(X, d)))$ : $\psi(h) = \eta_h|_{[0,1]}$,  
which is shown to be an isometric embedding with respect to the sup-metrics.)
By (2) we have $\phi( {\cal G} \times [0,1] ) \subset  {\cal G}$. 
\end{proof} 

\begin{remark}\label{rmk_max}
(1) The subset ${\cal G}$ is maximal in ${\cal H}^u_b(C_\kappa(X, d))$ in the following sense: 
\begin{enumerate}
\item[$(\natural)$] If ${\cal S} \subset {\cal H}(C_\kappa(X, d))$, 
$\phi : {\cal S} \times [0,1] \to {\cal H}(C_\kappa(X, d))$ is continuous
and $\phi({\cal S} \times [0,1]) \subset {\cal H}^u(C_\kappa(X, d))$, 
then ${\cal S} \subset {\cal G}$. 

\item[(2)] Suppose ${\cal S} \subset {\cal G}$ and ${\cal H}$ is the subgroup of ${\cal G}$ generated by ${\cal S}$. 
If $\phi({\cal S} \times [0,1]) \subset {\cal H}$, then $\phi({\cal H} \times [0,1]) \subset {\cal H}$ and 
the contraction $\phi$ of ${\cal G}$ restricts to a contraction of ${\cal H}$. 
\end{enumerate}
\end{remark}

\begin{proof}
(1) The assertion follows from the definition of ${\cal G}$ itself. 

(2) Any $h \in {\cal H}$ can be represented as 
$$\mbox{$h = g_1^{\ \epsilon_1} \cdots g_m^{\ \epsilon_m}$ \hspace{5mm} 
for some \ $g_1, \cdots, g_m \in {\cal S}$ \ and \ $\epsilon_1, \cdots, \epsilon_m = \pm 1$.}$$ 
Then, for any $t \in [0,1]$ we have \ $(g_1)_t, \cdots, (g_m)_t \in {\cal H}$ \ and \ 
$h_t =  (g_1)_t^{\ \epsilon_1} \cdots (g_m)_t^{\ \epsilon_m} \in {\cal H}$. 
\end{proof} 

Next we show that the group ${\cal G}$ includes the normal subgroup ${\cal H}_0(C_\kappa(X, d))$ of ${\cal H}^u_b(C_\kappa(X, d))$ (cf. Section 2.4).

\begin{proposition}\label{prop_contraction_H_0} 
Both ${\cal H}_0(C_\kappa(X, d))$ and ${\cal H}_c(C_\kappa(X, d))$ are normal subgroups of ${\cal G}$ and 
the contraction $\phi$ of ${\cal G}$ restricts to the contractions 
$$\mbox{$\phi : {\cal H}_0(C_\kappa(X, d)) \times [0,1] \to {\cal H}_0(C_\kappa(X, d))$\ \ \text{and} \ \ 
$\phi : {\cal H}_c(C_\kappa(X, d)) \times [0,1] \to {\cal H}_c(C_\kappa(X, d))$.}$$
\end{proposition}

This proposition follows from the next lemma. 

\begin{lemma}\label{lem_G_phi} {\rm (1)}  If $h \in {\cal H}_c(C_\kappa(X, d))$, then $h_t \in {\cal H}_c(C_\kappa(X, d))$ $(t \in [0,\infty))$. 
\begin{enumerate}
\item[(2)] If $h \in {\cal H}_0(C_\kappa(X, d))$, then 
{\rm (i)} $h_t \in {\cal H}_0(C_\kappa(X, d))$ $(t \in [0,1])$ and \ {\rm (ii)} $\eta_h|_{[0,1]}$ is continuous. 
\end{enumerate}
\end{lemma} 

\begin{proof} 
(1) If $h \in {\cal H}_c(C_\kappa(X, d))$, then 
$h_t = \theta_t h \theta_t^{-1} \in {\cal H}_c(C_\kappa(X, d))$ for any $t \in (0, \infty)$. 

(2) First we show that 
\begin{itemize}
\item[$(\dagger)$] for any $\e > 0$ there exists $r > 0$ such that $\tilde{d}_\kappa(h_t(u), u) < \e$ 
for any $t \in [0,1]$ and $u \in C_\kappa(X,d)_r$. 
\end{itemize}
In fact, since $h \in {\cal H}_0(C_\kappa(X,d))$, for any $\e > 0$ 
there exists a compact set $K \subset C_\kappa(X,d)$ such that 
$\tilde{d}_\kappa(h(u), u) < \e$ $(u \in C_\kappa(X,d) - K)$. 
There exists $r > 0$ such that $K \subset O_r(\bs{0})$.  
For any $t \in (0,1]$ and $u \in C_\kappa(X,d)_r$, since $\frac{1}{t}u \in C_\kappa(X,d)_r \subset C_\kappa(X,d) - K$, it follows that 
$$\mbox{$\tilde{d}_\kappa(h_t(u), u) 
= \tilde{d}_\kappa(th(\frac{1}{t}u), t(\frac{1}{t}u))
\leq \tilde{d}_\kappa(h(\frac{1}{t}u), \frac{1}{t}u)) < \e.$}$$ 

(i) For any $r \geq 0$ the closed $r$-neighborhood $B_r(\bs{0})$ is compact, 
since $B_r(\bs{0}) = \{ tx \in C_\kappa(X,d) \mid x \in X, t \in [0,r] \}$ and $X$ is assumed to be compact. 
Thus, the claim (i) follows from $(\dagger)$. 

(ii) We show that $\eta_h|_{[0,1]}$ is uniformly continuous. Take any $\e > 0$. 

(a) There exists $r > 0$ such that $\tilde{d}_\kappa(h_t(u), u) < \e/2$ $(t \in [0,1], u \in C_\kappa(X,d)_r)$. 
Then, for any $s, t \in [0,1]$ it follows that 
$$\tilde{d}_\kappa(h_s(u), h_t(u)) \leq \tilde{d}_\kappa(h_s(u), u) + \tilde{d}_\kappa(u, h_t(u)) < \e \ \ 
(u \in C_\kappa(X,d)_r).$$ 

(b) Since $h \in {\cal H}_0(C_\kappa(X, d)) \subset {\cal H}_b(C_\kappa(X, d))$, it follows that 
$\Phi(h)$ is an isotopy and the map $C_\kappa(X, d) \times [0,1] \ni (u,t) \mapsto h_t(u) \in C_\kappa(X, d)$ is continuous.
Since $B_r(\bs{0})$ is compact, there exists $\delta > 0$ such that if $s, t \in [0,1]$ and $|s-t| < \delta$ then 
$\tilde{d}_\kappa(h_s(u), h_t(u)) < \e$ $(u \in B_r(\bs{0}))$. 

These observations mean that  
$\tilde{d}_\kappa(h_s, h_t) \leq \e$ if $s,t \in [0,1]$ and $|s-t| < \delta$. 
\end{proof}


\subsection{Examples} \mbox{}

First we focus on the Euclidean space $({\Bbb R}^n,d)$. 
Since it has the canonical $0$-cone structure ${\Bbb R}^n = C_0({\Bbb S}^{n-1})$, 
the results in the previous sections are applied to the group ${\cal H}^u_b({\Bbb R}^n)$. 
The next example shows that the function $\eta_h|_{[0,1]}$ is not necessarily continuous for $h \in {\cal H}^u_b(\IR^n)$. 

\begin{example}\label{exp_Alexander_isotopy_IR^n} (the $\IR^n$-case) 
For any non-zero vector $v \in \IR^n$ we can find $h \in {\cal H}^u_b(\IR^n)$ such that 
\begin{itemize}
\item[] 
\begin{itemize}
\item[(i)\,] $h((2k+1)v) = (2k+1)v$ \ $(k \in \IN)$ \ \ and 
\item[(ii)] there exists $c > 0$ such that \ $d(h(2kv), 2kv) > c$ \ $(k \in \IN)$. 
\end{itemize}
\end{itemize}
For any such $h$, the function $\eta_h|_{[0,1]}$ is not continuous at $t = 1$.
\end{example}

\begin{proof}
Assume the contrary, that $\eta_h|_{[0,1]}$ is continuous at $t = 1$. 
Then, there exists $t_0 \in [0, 1)$ such that $d(h_t, h) < c$ for any $t \in (t_0, 1]$. 
Since $\frac{2k}{2k+1} \to 1$ $(k \to \infty)$, there exists $k \in \IN$ such that $t := \frac{2k}{2k+1} > t_0$. \\[1mm] 
It follows that \ \ $h_t(2kv) = th\left(\mbox{$\frac{1}{t}2kv$}\right) = th((2k+1)v) = t(2k+1)v = 2kv$ \ \ so that 
$$d(h_t, h) \geq d(h_t(2kv), h(2kv)) = d(2kv, h(2kv)) > c.$$ 
This contradicts $d(h_t, h) < c$. 
\end{proof} 

In \cite{Ya} we showed that ${\cal H}^u_b({\Bbb R}^n)$ is contractible in the usual sense. 
There we adopted the following strategy: 
(1) Since $\IR^n$ admits a metric covering projection onto the flat torus, $\IR^n$ has the property (LD) and 
there exists a contraction $\chi_t$ of a small open ball ${\cal H}^u(\id, \e; \IR^n)$ in ${\cal H}^u_b({\Bbb R}^n)$. 
(2) Given $\alpha > 0$, using $\chi_t$ and a similarity transformation $\theta_\gamma$, 
we can find $\beta > 0$ and 
a contraction $\psi_t$ of the open ball ${\cal H}^u(\id, \alpha; \IR^n)$ in ${\cal H}^u(\id, \beta; \IR^n)$. 
(3) The iteration of (2) yields 
a sequence of contractions $\psi^i_t$ of ${\cal H}^u(\id, \alpha_i; \IR^n)$ in ${\cal H}^u(\id, \alpha_{i+1}; \IR^n)$ 
for some increasing sequence $\alpha_i \in \IR$ $(i \in \IN)$. 
A contraction of ${\cal H}^u_b({\Bbb R}^n)$ is obtained by composing these contractions $\psi^i_t$. 
In this argument, we need not change the scale factor $\gamma$ of the similarity transformation $\theta_\gamma$ continuously.  

By Proposition~\ref{prop_G} the Alexander isotopies induce a contraction of the subgroup ${\cal G}(\IR^n)$ of ${\cal H}^u_b({\Bbb R}^n)$,  
$$\phi : {\cal G}(\IR^n) \times [0,1] \to {\cal G}(\IR^n) : \phi(h,t) = h_t.$$
This group includes the translations $h_v$ $(v \in \IR^n)$ defined by $h_v(x) = x + v$ $(x \in \IR^n)$.  

\begin{example}\label{exp_IR^n_A} The group ${\cal G}(\IR^n)$ includes the following subgroups, 
each of which is preserved by the contraction $\phi$ of ${\cal G}(\IR^n)$:  
\begin{tabular}[t]{l}
(a) ${\cal A}(\IR^n) = \{ h_v \mid v \in \IR^n \}$ \ \ \ (b) ${\cal H}_c(\IR^n)$ \ \ \ (c) ${\cal H}_0(\IR^n)$ \\[1.5mm] 
(d) ${\cal H}_c = \{ gh_v \mid g \in {\cal H}_c(\IR^n), v \in \IR^n \}$ \ \  
(e) ${\cal H}_0 = \{ gh_v \mid g \in {\cal H_0}(\IR^n), v \in \IR^n \}$. 
\end{tabular} 
\end{example}

\begin{proof} 
(a) For each $v \in \IR^n$ it follows that 
$h = h_v \in {\cal H}^u_b({\Bbb R}^n)$, $h_t = h_{tv}$, $d(h_t, h_s) = \|v\||t - s|$ $(t, s \in [0,\infty))$ and $\eta_h$ is continuous. 
These observation imply the claim on ${\cal A}(\IR^n)$. 

(b)(c) The claims follow from Proposition~\ref{prop_contraction_H_0}. 

(d)(e) Since ${\cal H}_c(\IR^n)$ and ${\cal H}_0(\IR^n)$ are normal subgroups of ${\cal H}^u_b({\Bbb R}^n)$, 
it is seen that ${\cal H}_c \subset {\cal H}_0$ coincide with the subgroups of  ${\cal H}^u_b({\Bbb R}^n)$ generated by 
the unions ${\cal H}_c(\IR^n) \cup {\cal A}(\IR^n)$ and ${\cal H}_0(\IR^n) \cup {\cal A}(\IR^n)$ respectively. 
The claims now follow from (a), (b), (c) and Remark~\ref{rmk_max}\,(2) 
\end{proof} 

The author was informed that R.~A.~McCoy had shown that the Alexander isotopies induce a contraction of ${\cal H}_c(\IR^n)$. 

\begin{remark}\label{rmk_compact-open_top} 
Let ${\cal H}_c({\Bbb R}^n)_{co}$ denote the group ${\cal H}_c({\Bbb R}^n)$ endowed with the compact-open topology. 
For $n \geq 2$, the function $\phi : {\cal H}_c({\Bbb R}^n)_{co} \times [0,1] \to {\cal H}_c({\Bbb R}^n)_{co}$ is not continuous at $(\id_{{\Bbb R}^n}, 0)$. 
\end{remark}

\begin{proof} 
Recall that a fundamental neighborhood system of $\id$ in ${\cal H}_c({\Bbb R}^n)_{co}$ is given by 
$$\mbox{${\cal U}(\id, K, \e) = \{ h \in {\cal H}_c({\Bbb R}^n) \mid d(h(x), x) < \e \ (x \in K)\}$
\hspace{3mm} $(K \in {\cal K}(\IR^n), \e > 0)$}.$$

Contrary, suppose $\phi$ is continuous at $(\id_{{\Bbb R}^n}, 0)$. 
Consider the points $e_1 = (1, 0, \cdots, 0)$ and $e_2 = (0, 1, \cdots, 0)$ in $\IR^n$. 
Since $\phi(\id, 0) = \id$, 
there exist $r > 0$ and $\delta > 0$ such that 
$\phi(\U(\id, B_r(0), \delta) \times [0, \delta)) \subset \U(\id, \{ e_1 \}, 1)$. 
Choose $s > r$ with $\frac{1}{s} < \delta$ and 
find $h \in {\cal H}_c({\Bbb R}^n) $ such that $h(s e_1)= s  e_2$ and 
$h = \id$ on $B_r(0) \cup ({\Bbb R}^n - B_{s+1}(0))$. 
Then, $h \in \U(\id, B_r(0), \delta)$ and $t := \frac{1}{s} \in [0,\delta)$, so 
$h_t \in \U(\id, \{ e_1 \}, 1)$ and $e_2 = h_t(e_1) \in O_1(e_1)$, which is a contradiction. 
\end{proof} 

The argument in Example~\ref{exp_Alexander_isotopy_IR^n} extends to the $\kappa$-cone case. 

\begin{example}\label{exp_Alexander_isotopy_kappa-cones} (the $\kappa$-cone case) 
Consider the following condition on a metric space $(Y, \rho)$: 
\begin{itemize}
\item[$(\ast)$] 
There exists a point $z \in Y$, $L \geq 1$ and $\e_0 > 0$ such that 
for any $\e \in (0, \e_0)$ there exists an isotopy $\{ f_s \}_{s \in [0,1]}$ on $Y$ 
which satisfies the following condition : \\
\hspace*{-3mm} $(\flat)(z, L, \e)$ : 
\begin{tabular}[t]{@{}l}
for each $s, t \in [0,1]$ \  
(i) $f_s$ is a $L$-Lipschitz homeomorphism of $(Y,\rho)$, (ii) $f_1 = \id_Y$, \\[2mm]
\hspace{-2mm} (iii) ${\rm supp}\, f_s \subset O_\e(z)$, \  
(iv) $\rho(f_s^{\, \delta}, f_t^{\, \delta}) \leq \e L|s-t|$ $(\delta = \pm 1)$ and (v) $\rho(f_0(z),z) \geq \frac{\e}{L}$.
\end{tabular}  
\end{itemize}
\vskip 1mm 

For example, the Euclidean space $\IR^n$ satisfies the condition $(\ast)$. 
Indeed, we can find an isotopy $h_s$ on $\IR^n$ which satisfies the condition $(\flat)(\bs{0}, L, 1)$ for some $L \geq 1$. 
Then, for any $\e > 0$ the isotopy $f_s = \theta_\e h_s \theta_{1/\e}$ $(s \in [0,1])$ satisfies the condition $(\flat)(\bs{0}, L, \e)$. 
Note that (a) the condition $(\ast)$ is preserved under any bi-Lipschitz equivalence and 
(b) the condition $(\ast)$ is a local property, 
that is, when $U$ is a neighborhood of a point $z$ in $(Y, \rho)$, 
the metric space $(Y, \rho)$ satisfies the condition $(\ast)$ with respect to the point $z$ if and only if 
so does $(U, \rho|_U)$.  
In fact, 
if $\phi : (Y,\rho) \cong (Z, \sigma)$ is a $K$-Lipschitz homeomorphism and 
$f_s$ is an isotopy on $Y$ which satisfies the condition  $(\flat)(y, L, \e/K)$, then 
$g_s = \phi f_s \phi^{-1}$ is an isotopy on $Z$ which satisfies the condition  $(\flat)(\phi(y), K^2 L, \e)$. 
Thus, any Lipschitz metric manifold also satisfies the condition $(\ast)$. 

If a compact metric space $(X, d)$ satisfies the condition $(\ast)$, 
then we can construct 
$h \in {\cal H}^u_b(C_\kappa(X, d))$ for which 
the function $\eta_h|_{[0,1]}$ is not continuous at $1$. 
\end{example} 

\begin{proof} 
Let $c_\kappa = \frac{1}{2}$ if $\kappa = 0$ and $c_\kappa = \frac{\sqrt{-\kappa}}{2}$ if $\kappa < 0$. 
Suppose $(X, d)$ satisfies the condition $(\ast)$ with respect to a point $z \in X$, $L \geq 1$ and $\e_0 > 0$. 
We may assume that $L> \frac{1}{c_\kappa}$, $L > \e_0$ and $L\lambda_\kappa(2) \e_0 > 1$. 

(1) For each $k \in \IN$ define $\e_k \in (0, \e_0)$ by $L\lambda_\kappa(4k+2) \e_k = 1$ and 
take  an isotopy $\{ f^k_s \}_{s \in [0,1]}$ on $X$ which satisfies the condition $(\flat)(z, L,\e_k)$. 
Define $h \in {\cal H}(C_\kappa(X, d))$ by 
$$h(rx) = 
\left\{ \hspace{-1mm} 
\begin{array}[c]{ll}
rf_s^k(x) & (x \in X, \ r = 2k \pm s \ (s \in [0,1], \ k \in \IN)), \\[2mm] 
rx & (x \in X, \ r \in [0,1]).
\end{array}\right.$$
We have to show that $h \in {\cal H}^u_b(C_\kappa(X, d))$. 

(i)  For $\delta = \pm1$, \ $x, y \in X$ \ and \ $r = 2k + \epsilon s$, \ $q = 2k + \epsilon t$ \ $(k \in \IN, \ s,t \in [0,1], \ \epsilon = \pm1)$   
\begin{itemize}
\item[(a)] $\tilde{d}_\kappa(h^\delta(rx), h^\delta(ry)) = \tilde{d}_\kappa(r(f_s^k)^\delta(x), r(f_s^k)^\delta(y)) \leq L\tilde{d}_\kappa(rx, ry)$,
\item[(b)] $\tilde{d}_\kappa(r(f_s^k)^\delta(x), r(f_t^k)^\delta(x)) \leq L\tilde{d}_\kappa(rx, qx) \leq L$, \ \ \  
$\tilde{d}_\kappa(h(rx), rx) = \tilde{d}_\kappa(rf_s^k(x), rx) \leq L$, 
\item[(c)] $\tilde{d}_\kappa(h^\delta(rx), h^\delta(qx)) \leq (L+1) \tilde{d}_\kappa(rx, qx)$.
\end{itemize}
These inequalities are deduced from the following observations: 

\begin{itemize}
\item[(a$'$)] Since $(f_s^k)^\delta : (X, d_\pi) \to (X, d_\pi)$ is also $L$-Lipschitz, we have
$d_\pi((f_s^k)^\delta(x), (f_s^k)^\delta(y)) \leq L d_\pi(x, y)$ \\
and so, from Remark~\ref{rem_kappa-cone}\,(3) and (1)(ii) it follows that \\
$\begin{array}[t]{@{}l@{\ }l}
\lambda_\kappa \big(\tilde{d}_\kappa(r(f_s^k)^\delta(x), r(f_s^k)^\delta(y))\big)
& = \lambda_\kappa(2r) \sin \frac{1}{2} d_\pi((f_s^k)^\delta(x), (f_s^k)^\delta(y)) \\[1.5mm] 
& \leq \lambda_\kappa(2r) L \sin \frac{1}{2} d_\pi(x, y)
= L \lambda_\kappa \big(\tilde{d}_\kappa(rx, ry)\big) \leq \lambda_\kappa \big(L\tilde{d}_\kappa(rx, ry)\big). 
\end{array}$  
\vskip 2mm 

\item[(b$'$)] Since $d((f_s^k)^\delta, (f_t^k)^\delta) \leq \e_k L |s-t|$, we have \\[1mm] 
\hspace*{5mm} $\sin \frac{1}{2} d_\pi((f_s^k)^\delta(x), (f_t^k)^\delta(x)) 
\leq d_\pi((f_s^k)^\delta(x), (f_t^k)^\delta(x)) \leq d((f_s^k)^\delta(x), (f_t^k)^\delta(x)) \leq \e_k L|s-t|$. \\[1mm] 
Since $c_\kappa u \leq \lambda_\kappa(u)$ $(u \geq 0)$, $L\lambda_\kappa(4k+2) \e_k = 1$ and $L \geq \frac{1}{c_\kappa}$, 
it follows that \\[2mm] 
\hspace*{5mm} $\begin{array}[t]{@{}l}
c_\kappa \tilde{d}_\kappa(r(f_s^k)^\delta(x), r(f_t^k)^\delta(x)) \leq
\lambda_\kappa \big(\tilde{d}_\kappa(r(f_s^k)^\delta(x), r(f_t^k)^\delta(x))\big) \\[2mm] 
\hspace{15mm} = \lambda_\kappa(2r) \sin \frac{1}{2} d_\pi((f_s^k)^\delta(x), (f_t^k)^\delta(x)) \leq \lambda_\kappa(4k+2) \e_k L|s-t| = |s-t|. \\[2mm] 
\therefore \ \tilde{d}_\kappa(r(f_s^k)^\delta(x), r(f_t^k)^\delta(x)) \leq L|s-t| = L|r-q| = L\tilde{d}_\kappa(rx, qx) \leq L. 
\end{array}$ \\[2mm] 
Letting $t=1$, we have the 2nd inequality. 

\vskip 2mm 

\item[(c$'$)] Since $\tilde{d}_\kappa(r(f_t^k)^\delta(x), q(f_t^k)^\delta(x)) = |r-q| = \tilde{d}_\kappa(rx, qx)$, 
from (i)(b) it follows that \\[2mm] 
$\begin{array}[c]{l}
\tilde{d}_\kappa(h^\delta(rx), h^\delta(qx)) = \tilde{d}_\kappa(r(f_s^k)^\delta(x), q(f_t^k)^\delta(x)) \\[2mm] 
\hspace{10mm} \leq  
\tilde{d}_\kappa(r(f_s^k)^\delta(x), r(f_t^k)^\delta(x))
+ \tilde{d}_\kappa(r(f_t^k)^\delta(x), q(f_t^k)^\delta(x)) \\[2mm] 
\hspace{10mm} \leq L \tilde{d}_\kappa(rx, qx) + \tilde{d}_\kappa(rx, qx) = (L+1) \tilde{d}_\kappa(rx, qx).
\end{array}$
\end{itemize}

(ii) The 2nd inequality in (i)(b) implies that $\tilde{d}_\kappa(h, \id) \leq L$.

(iii) We show that $h$ is a $(2L+1)$-Lipschitz homeomorphism. 
To see that $h^\delta$ $(\delta = \pm1)$ are $(2L+1)$-Lipschitz, 
take any $rx, qy \in C_\kappa(X, d)$ $(x,y \in X, r, q \in [0,\infty), r \leq q)$. 
From (i)(c) it follows that 
\begin{itemize}
\item[(d)] $\tilde{d}_\kappa(h^\delta(ry), h^\delta(qy)) \leq (L+1)\tilde{d}_\kappa(ry, qy)$. 
\end{itemize}
Indeed, if $k-1 \leq r \leq k \leq l \leq q \leq l+1$ $(k, l \in \IN)$, then 
\vskip 2mm 
\hspace*{5mm} 
$\begin{array}[c]{l@{\ }l} 
\tilde{d}_\kappa(h^\delta(ry), h^\delta(qy)) 
&\leq \tilde{d}_\kappa(h^\delta(ry), h^\delta(ky)) 
+ \sum_{i=k}^{l-1} \tilde{d}_\kappa(h^\delta(iy), h^\delta((i+1)y))
+ \tilde{d}_\kappa(h^\delta(ly), h^\delta(qy)) \\[2mm]  
&\leq  (L+1)\tilde{d}_\kappa(ry, ky) 
+ \sum_{i=k}^{l-1} (L+1)\tilde{d}_\kappa(iy, (i+1)y) 
+ (L+1)\tilde{d}_\kappa(ly, qy) \\[2mm]  
& = (L+1)\big((k-r) + \sum_{i=k}^{l-1} 1 + (q-l)\big) = (L+1)(q - r) 
= (L+1)\tilde{d}_\kappa(ry, qy) 
\end{array}$ \\[1mm] 
Then, by (a) and (d) we have 
\vskip 2mm 
\hspace*{5mm} 
$\begin{array}[t]{l@{\ }l} 
\tilde{d}_\kappa(h^\delta(rx), h^\delta(qy)) 
& \leq \tilde{d}_\kappa(h^\delta(rx), h^\delta(ry)) + \tilde{d}_\kappa(h^\delta(ry), h^\delta(qy)) \\[2mm] 
& \leq L \tilde{d}_\kappa(rx, ry) + (L+1)\tilde{d}_\kappa(ry, qy) = (2L+1)\tilde{d}_\kappa(rx, qy).
\end{array}$
\vskip 2mm 

(2) Next we show that the function $\eta_h|_{[0,1]}$ is not continuous at $1$. 

(i) The map $h$ has the following properties: 
\begin{itemize}
\item[(a)] $h((2k+1)z) = (2k+1)z$ for each $k \in \IN$.
\item[(b)] There exists $a > 0$ such that $\tilde{d}_\kappa(h(2kz), 2kz) \geq a$ for each $k \in \IN$. 
\end{itemize}

The claim (b) is verified as follows: 
Let $\ds \gamma := \inf_{k \in \IN} \mbox{$\frac{\lambda_\kappa(4k)}{\lambda_\kappa(4k+2)}$}$. 
Then $\gamma > 0$ since 
$\ds \lim_{k \to \infty} \mbox{$\frac{\lambda_\kappa(4k)}{\lambda_\kappa(4k+2)}$} =1 \text{ if $\kappa = 0$}$ and 
$= e^{-\sqrt{-\kappa}} \text{ if $\kappa < 0$}$. 
Since $L\lambda_\kappa(4k+2) \e_k = 1$ and $d_\pi(f_0^k(z), z) \geq \frac{\e_k}{L}$ by $(\flat)(z, L,\e_k)$ (v), 
it follows that 
$$\begin{array}[t]{l@{\ }l}
\lambda_\kappa \big(\tilde{d}_\kappa(h(2kz), 2kz)\big) 
& = \ \lambda_\kappa \big(\tilde{d}_\kappa(2kf_0^k(z), 2kz)\big) 
\ = \ \lambda_\kappa(4k) \sin \frac{1}{2} d_\pi(f_0^k(z), z) \\[2.5mm] 
& \geq \ \lambda_\kappa(4k) \sin \frac{1}{2} \frac{\e_k}{L} \ 
\geq \ \gamma \lambda_\kappa(4k+2)\frac{2}{\pi} \frac{1}{2} \frac{\e_k}{L} \ 
= \ \frac{\gamma}{\pi L^2}.
\end{array}$$ 

(ii) Assume that $\eta_h|_{[0,1]}$ is continuous at $t = 1$. 
Then, there exists $t_0 \in (0, 1)$ such that $\tilde{d}_\kappa(h_t, h) <  a$ for any $t \in (t_0, 1]$. 
Since $\frac{2k}{2k+1} \to 1$ $(k \to \infty)$, there exists $k \in \IN$ such that $t := \frac{2k}{2k+1} > t_0$. 
It follows that 
\begin{itemize} 
\item[(c)] $t \in (t_0, 1)$, \ so \ $\tilde{d}_\kappa(h_t, h) <  a$ \ and 
\item[(d)] $t(2k+1) = 2k$ \ and \ $h_t(2kz) = th(\frac{1}{t}2kz) = th((2k+1)z) = t(2k+1)z = 2kz$. 
\end{itemize} 
Hence we have \ 
$$\tilde{d}_\kappa(h_t, h) \geq \tilde{d}_\kappa(h_t(2kz), h(2kz)) = \tilde{d}_\kappa(2kz, h(2kz)) \geq a,$$ 
which contradicts (c). 
\end{proof} 


\end{document}